\newcommand*\bb{\mathbb}
\newcommand *\w{^\wedge}
\newcommand*\de{\partial}
\newcommand*\Ren{\textup{\textbf{R}}}
\newcommand*\FenR{\textup{\textbf{F}}_{R}}
\newcommand*\FenG{\textup{\textbf{F}}_{\mathfrak{G}}}
\newcommand*\lone{\textbf{L}^{\textup{1}}}
\newcommand*\divG{{\textbf{div}^\alpha_{\mathfrak{G}}}}
\newcommand*\bvG{\textbf{BV}_{\mathfrak{G}}^{\alpha}}
\newcommand*\tvG{\textbf{TV}_{\mathfrak{G}}^{\alpha}}
\newcommand*\lwbvGz{\textbf{L}_{\textbf{w}}^1\left(0,T;\bvG(\Omega;u_0)\right)}
\newcommand*\lwbvGzvz{\textbf{L}_{\textbf{w}}^1\left(0,T;\bvG(\Omega;v_0)\right)}
\newcommand*\bbR{\mathbb{R}}
\newcommand*\bbr{{\mathbb{R}^{\textup{N}}}}
\newcommand*\bvr{\textup{BV}_{\textup{R}}^{\alpha}}
\newcommand*\tvr{\textup{TV}_{\textup{R}}^{\alpha}}
\newcommand*\lwbv{L_{\textup{w}}^1\left(0,T;\bvr(\Omega)\right)}
\newcommand*\divr{\nabla_{\textup{R}}^{\alpha}}
\newtheorem{thm}{Theorem}[section]
\theoremstyle{definition}
\newtheorem{defn}[thm]{Definition}
\newtheorem{cor}[thm]{Corollary}
\newtheorem{prop}[thm]{Proposition}
\newtheorem{lem}[thm]{Lemma}
\newtheorem{rem}[thm]{Remark}
\DeclarePairedDelimiterX{\inp}[2]{\langle}{\rangle}{#1, #2}
\def\@tocline#1#2#3#4#5#6#7{\relax
  \ifnum #1>\c@tocdepth 
  \else
    \par \addpenalty\@secpenalty\addvspace{#2}%
    \begingroup \hyphenpenalty\@M
    \@ifempty{#4}{%
      \@tempdima\csname r@tocindent\number#1\endcsname\relax
    }{%
      \@tempdima#4\relax
    }%
    \parindent\z@ \leftskip#3\relax \advance\leftskip\@tempdima\relax
    \rightskip\@pnumwidth plus4em \parfillskip-\@pnumwidth
    #5\leavevmode\hskip-\@tempdima
      \ifcase #1
       \or\or \hskip 1em \or \hskip 2em \else \hskip 3em \fi%
      #6\nobreak\relax
    \dotfill\hbox to\@pnumwidth{\@tocpagenum{#7}}\par
    \nobreak
    \endgroup
  \fi}
\numberwithin{equation}{section}
\title{A variational approach to nonlocal image restoration flows}
\author{Harsh Prasad $^{\ast}$}
\author{Vivek Tewary$^{\dagger}$}
\address{$^{\ast}$Fakultät für Mathematik, Universität Bielefeld, Postfach 100131, D-33501 Bielefeld, Germany.}
\address{$^{\dagger}$School of Interwoven Arts and Sciences, Krea University, Sri City, Andhra Pradesh, 517646, India.}
\email{$^{\ast}$hprasad@math.uni-bielefeld.de}
\email{$^{\dagger}$vivek.tewary@krea.edu.in}
\thanks{$^{\ast}$Support from Deutsche Forschungsgemeinschaft (DFG, German Research Foundation) – Project-ID 317210226 – SFB 1283 is gratefully acknowledged}
\date{October 2024}
\begin{document}
\begin{abstract}
 We prove existence, uniqueness and initial time regularity for variational solutions to nonlocal total variation flows associated with image denoising and deblurring. In particular, we prove existence of parabolic minimisers $u$, that is,
 $$\int_0^T\int_\Omega u\de_t\phi\,dx + \textbf{F}(u(t))\,dt\leq \int_0^T \textbf{F}(u+\phi)(t)\,dt,$$ for $\phi\in C^\infty_c(\Omega\times (0,T))$. The prototypical functional $\textbf{F}(u)$ is $\textbf{F}(u)=\textbf{TV}^{\alpha}_{\cdot}(u)+\frac{\kappa}{\zeta}\int_\Omega|u(x)-u_0(x)|^\zeta\,dx$ for $\zeta\geq 1$. Here $\textbf{TV}^{\alpha}_{\cdot}$ is a fractional total variation of either the Riesz or the Gagliardo type and the second term is a regression term. These models are based on different definitions of fractional $\textbf{BV}$ spaces that have been proposed in the literature. The notion of solution is completely variational and based on the weighted dissipation method. We demonstrate existence without smoothness assumptions on the domain and exhibit uniqueness without using strict convexity. We can also deal with fairly general fidelity or regression terms in the model. Furthermore, the method also provides a novel route to constructing solutions of the parabolic fractional $1$-Laplace equation. 
\end{abstract}
\maketitle
\tableofcontents
\section{Introduction}
We are interested in studying certain models for nonlocal image restoration flows including both the denoising and the deblurring flows from a variational standpoint. To explain our interest in the models under consideration, let us begin with two classical variants for image denoising, namely the Rudin-Osher-Fatemi (ROF) model \cite{Rudin1992} and the Chan-Esedoglu (CE) model \cite{Chan2005}.  
\subsection{ROF, CE and generalised regression models} Given a corrupted or noisy monochromatic image $u_0$, we want to restore the original image. To do this, the ROF model proposes to minimise the functional
\[
F_{ROF}(v) = \int_{\Omega} |\nabla v| \,dx + \Lambda\int_{\Omega}|v-u_0|^2 \,dx
\]
where $\Omega \subset \bbr$ is a bounded domain and $\Lambda > 0$. The first term is the regularisation term whereas the second is the regression term which ensures that we do not stray too far away from the original image. An important limitation of the ROF model is that it does not have a large enough class of noise-free images which it leaves invariant - it is not contrast invariant \cite[Section 1]{Chan2005}. On the other hand, the CE model proceeds by minimising 
\[
F_{CE}(v) = \int_{\Omega} |\nabla v| \,dx + \Lambda\int_{\Omega}|v-u_0| \,dx.
\]
and is, in fact, contrast invariant. Of course, we lose uniqueness in the CE model - however, this may not necessarily be a disadvantage \cite[Sections 6 and 7]{Chan2005}. In fact, besides the $L^2$ and $L^1$ norms, one may also consider logistic or quantile or robust regression terms \cite{Osher2004}. They can all be modelled by the considering
\[
\int_{\Omega} \mathbf{R}(x,u(x)) \,dx
\]
for any function 
\[
\mathbf{R}: \Omega \times \bb{R} \rightarrow [0,\infty)
\]
such that
\begin{itemize}
    \item[1.] $\mathbf{R}$ is a Carathéodory integrand and
    \item[2.]  $\mathbf{R}$ is convex in the second variable for a.e. $x \in \Omega$.
\end{itemize}
\subsection{Generalised deblurring models} In deblurring, one is interested in recovering a sharp image from a blurry observation $u_0$. Since it is an inverse problem, it is important to choose an appropriate regularisation. Here we focus on the Tikhonov regularisation \cite{Tihonov1963, Tihonov1963a, Tikhonov1963} and assume that we have a linear blurring operator 
\[
K : L^1(\Omega) \rightarrow L^2(\Omega)
\]
which is bounded, injective and satisfies the DC condition namely 
\[
K[1] = 1
\]
Recall that we work with bounded domains. DC stands for direct-current and the condition says that the adjoint $K^*$ preserves means. Examples of linear blur include motion blurs, defocused blurs and atmospheric blurs \cite[Chapter 5]{Chan2005a}. To solve the deblurring inverse problem via  Tikhonov regularisation one minimises the following functional:
\[
F_{K}(v) = \int_{\Omega} |\nabla v| \,dx + \Lambda\int_{\Omega}|K[v]-u_0|^2 \,dx.
\]
The model was studied in \cite{Acar1994, Chambolle1997}. 
\subsection{Nonlocal models} From an abstract point of view, the above models differ in the penalisation term and keep the same regularisation term. From such a vantage, it is therefore not unexpected to ask what happens if we also change how we regularise. In fact, doing so is not just a mathematical curiosity. In the models above where we regularise by minimising the gradient, we are implicitly assuming some (spatial) regularity of the final image. For example, we are assuming $BV$ regularity above. This may lead to a loss of finer details in the image such as different textures which will be smoothed out. Therefore, it is desirable to lower the regularity requirements. One of the ways of doing this is to introduce nonlocal functionals to replace the $BV$ term i.e. the regularisation term. Many such models have indeed been proposed in literature and they seem to have better performance than the local model based on the $BV$ term \cite{Buades2005, Gilboa2007, Gilboa2009, Kindermann2005, Mahmoudi2005}.
\subsection{Nonlocal BV} In contrast to the nonlocal models referenced above, we are interested in studying regularisation by replacing the $BV$ term by nonlocal variants of $BV$. There are at least two 'natural' definitions of what constitutes nonlocal $BV$ in the literature. Following \cite{Antil2024} we call these the Gagliardo $BV$ space and the Riesz $BV$ space (see \cref{sec:gag} and \cref{sec:riesz}.) Let us explain the properties of these different nonlocal $BV$ spaces. The two $BV$ spaces use two different notions of the gradient and hence the divergence operator. For the Riesz case \cite{Horvath1959}, the definition of the gradient arises naturally from symmetry and continuity requirements - namely invariance under translations and rotations, homogeneity under dilation and continuity for example in Schwartz space. In fact, it is the unique such operator \cite{Silhavy2019}. The Riesz $BV$ have been studied extensively in recent years - see for example \cite{Comi2022, Comi2023, Comi2022a, Comi2023a, Schikorra2015, Schikorra2017, Schikorra2017a}. On the other hand, the Gagliardo $BV$ space has shown up in as varied instances as fractional perimeters \cite{Caffarelli2010}, peridynamics \cite{DU2013}, Dirichlet forms \cite{Hinz2015} and harmonic analysis \cite{Mazowiecka2018}. In fact it can be identified with the fractional Sobolov space $W^{s,1}$ \cite[Theorem 3.4]{Antil2024}. 
\subsection{Image restoration via nonlocal \texorpdfstring{$BV$}{BV} flows} In this article we study image restoration flows driven by nonlocal $BV$ regularisation terms - both deblurring and denoising - and establish existence, uniqueness, initial (time) regularity and energy estimates for such flows. In the local case, i.e., when working with the usual $BV$ space, such results were first established in \cite{Boegelein2015} wherein the existence is proved via minimising certain convex functionals. The approach in \cite{Boegelein2015} is quite flexible and lets one deal with fairly general regression terms and linear blurs. We adapt their approach to the nonlocal setup. In the time-independent setup, the image denoising problem was studied in \cite{Antil2024} for $L^p$ regression terms for $p \in [1,p_{\infty})$ with $p_{\infty} < \infty$ in bounded Lipschitz domains. The reason there is a restriction on the upper range of $p$ is due to a lack of compactness. The problem with compactness has also been noted in \cite{Antil2024a}. Existence in the time independent setup for denoising functionals in the Riesz case under growth restrictions have also been obtained in \cite{Schoenberger2024} whereas the denoising problem with the Gagliardo $BV$ space with $L^1$ and $L^2$ regression terms were studied in \cite{Bessas2022, Novaga2022}. Following \cite{Boegelein2015} we are able to remove the growth restrictions on the regression terms and are in fact able to deal with fairly general Carathéodory integrands without any growth assumptions. Again as in \cite{Boegelein2015} we are also able to deal with fairly general linear blurs. Furthermore, we can work in arbitrary domains in the Gagliardo case because the "Cauchy data" is already prescribed for us so there is no extension procedure involved as in \cite[Proposition 3.10]{Antil2024}  (for the Riesz case, the existence result in \cite{Antil2024} already holds for arbitrary domains.) Finally, as far as uniqueness is concerned, we are able to work without strict convexity - in particular, as compared to the time-independent setup of \cite{Antil2024}, we are able to get uniqueness for $L^1$ regression terms. This is made possible due to the parabolic nature of the technique (see \cref{sec:uni}.) Let us note that in \cite{Bessas2022}, uniqueness for the Gagliardo case with $L^1$ regression terms is obtained under further restrictions on the regularity of the level sets of $u_0$ for high values of $\Lambda$ whereas for low values of $\Lambda$ there are restrictions on the support of $u_0$. The two key ideas we use are the Weighted Inertia-Dissipation-Energy (WIDE) technique  and an $L^1$ compactness lemma. The compactness lemma is similar to \cite[Lemma 8.1]{Boegelein2015} and \cite{Simon1986} where we were able to use results already established in \cite{Antil2024} along with Banach valued Arzela-Ascoli \cite[Chapter 3, Theorem 3.1]{Lang1993} to prove \cref{lem:com} and \cref{lem:com2}. The WIDE approach is a general principle that lets one use elliptic-in-time regularisation to study various nonlinear evolution problems. It was first applied in the setup where the evolution is governed by anomalous diffusion operators such as the fractional p-Laplacian in \cite{Prasad2023}. We refer the reader to the article \cite{Stefanelli2024} for an extensive review of this technique.
\section{Gradient Flows to nonlocal \texorpdfstring{$BV$}{BV} based image restoration}
\subsection{ \texorpdfstring{$\tvr$}{tvr}-Deblurring Flow}
Suppose that 
\begin{align}\label{deblurfunc}
\begin{cases}
&K:L^1(\Omega) \rightarrow L^2(\Omega) \text{ is a bounded, injective function}\\
&\text{and satisfies } K[1]=1.
\end{cases}
\end{align}
The condition $K[1] = 1$ is called the DC condition. For $u_0 \in L^2(\Omega)$ and penalisation $\kappa \geq 1$, the functional is
\[
\FenR(u) = \tvr(u) + \frac{\kappa}{2}\int_{\Omega}|K[u]-u_0|^2\,dx.
\]
We will assume that
\[
u_0 \in L^2(\Omega) \cap \bvr(\Omega).
\]
which implies that
\begin{align}\label{finiteenergydeblur}
\FenR(u_0) < +\infty.
\end{align}
\begin{defn}\label{def:solr}
    Denote by $\Omega_T$ the set $\Omega\times [0,T]$. A function $u: \Omega_T \rightarrow \mathbb{R}$ for $0<T<\infty$ is called a variational solution in $\Omega_T$ for the $\tvr$ deblurring flow provided
    \[
    u \in \lwbv\cap C^0([0,T];L^2(\Omega))
    \]
    and the following variational inequality holds:
    \begin{gather}
    \label{eq:sol}
    \begin{aligned}
        \int_0^T \FenR(u(t)) \,dt \leq& \int_0^T\int_{\Omega}\partial_t v(v-u) \,dx \,dt + \int_0^T \FenR(v(t))\,dt\\
        &-\frac{1}{2}\|(v-u)(T)\|_{L^2(\Omega)}^2+\frac{1}{2}\|v(0)-u_0\|_{L^2(\Omega)}^2
    \end{aligned}
    \end{gather}
    for any $v\in \lwbv$ with $\partial_tv\in L^2(\Omega_T)$ and $v(0) \in L^2(\Omega)$. If $u$ is a variational solution in $\Omega_T$ for every $T>0$, we say that $u$ is a global variational solution. 
\end{defn}
\begin{rem}
    Since $v(x,t) := u_0(x)$ is a valid comparison map, it follows that variational solutions have finite energy because $u_0$ has finite energy.
\end{rem}
\begin{thm}[Existence]\label{existence1}
    Suppose that $K$ is a linear blur as in \cref{deblurfunc} and $u_0$ satisfies \cref{finiteenergydeblur}. Then there exists a global variational solution to the $\tvr$-deblurring flow as defined in \cref{def:solr}.   
\end{thm}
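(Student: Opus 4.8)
The plan is to construct $u$ as the limit of the WIDE elliptic-in-time regularisation of the flow, and to read off the variational inequality \cref{eq:sol} from the limit of the regularised Euler--Lagrange inequalities; I would build the solution once on $(0,\infty)$ and restrict to each $\Omega_T$, which immediately yields a global solution. For $\varepsilon>0$ I would minimise, over the affine convex class $\mathcal{K}:=\{v:\de_t v\in L^2(\Omega\times(0,\infty)),\ v(0)=u_0\}$ of maps making the functional finite, the weighted functional
\[
\mathcal{W}_\varepsilon(v):=\int_0^\infty e^{-t/\varepsilon}\left(\frac{\varepsilon}{2}\int_\Om|\de_t v|^2\,dx+\FenR(v(t))\right)dt .
\]
The integrand is convex: $\tfrac\kappa2\norm{K[\cdot]-u_0}_{L^2(\Omega)}^2$ is convex since $K$ is linear and bounded, $\tvr$ is convex and lower semicontinuous, and the kinetic term is quadratic. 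Coercivity follows from the kinetic term (controlling $\de_t v$ in weighted $L^2$) together with the energy bound and the embedding of $\bvr$ into a Lebesgue space, so a minimiser $u_\varepsilon\in\mathcal{K}$ exists by the direct method.

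\emph{Regularised inequality and uniform bounds.} Convexity of $\mathcal{W}_\varepsilon$ gives, on differentiating $s\mapsto\mathcal{W}_\varepsilon(u_\varepsilon+s(v-u_\varepsilon))$ at $s=0^+$ and using convexity of $\FenR$, the inequality
\[
\int_0^\infty e^{-t/\varepsilon}\FenR(u_\varepsilon)\,dt\le\int_0^\infty e^{-t/\varepsilon}\FenR(v)\,dt+\varepsilon\int_0^\infty e^{-t/\varepsilon}\int_\Om\de_t u_\varepsilon\,\de_t(v-u_\varepsilon)\,dx\,dt
\]
for all $v\in\mathcal{K}$; equivalently $u_\varepsilon$ solves $-\varepsilon\de_{tt}u_\varepsilon+\de_t u_\varepsilon+\de\FenR(u_\varepsilon)\ni0$ with $u_\varepsilon(0)=u_0$. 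Testing with the stationary competitor $v\equiv u_0$ and invoking \cref{finiteenergydeblur} produces bounds, uniform in $\varepsilon$, on $\sup_t\FenR(u_\varepsilon(t))$, on $\norm{u_\varepsilon}_{\lwbv}$, and on $\varepsilon\int_0^\infty e^{-t/\varepsilon}\norm{\de_t u_\varepsilon}_{L^2(\Omega)}^2\,dt$; this last bound is what renders the genuinely $\varepsilon$-order (second-order in time) contributions negligible.

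\emph{Passage to the limit.} After rewriting the kinetic cross-term through the time integration-by-parts identity $\inp{\de_t u_\varepsilon}{v-u_\varepsilon}=\inp{\de_t v}{v-u_\varepsilon}-\tfrac12\de_t\norm{v-u_\varepsilon}_{L^2(\Omega)}^2$ — which is precisely what generates the boundary contributions $-\tfrac12\norm{(v-u)(T)}_{L^2(\Omega)}^2+\tfrac12\norm{v(0)-u_0}_{L^2(\Omega)}^2$ of \cref{eq:sol} — I would send $\varepsilon\to0$. The uniform energy bound, the $L^1$ compactness lemma, and Banach-valued Arzel\`a--Ascoli furnish a subsequence with $u_\varepsilon\to u$ strongly in $L^1(\Omega\times(0,T))$ and in $C^0([0,T];L^2(\Omega))$; this strong convergence identifies the limit inside the convex nonlocal term $\tvr$, passes the fidelity term through boundedness of $K:L^1(\Omega)\to L^2(\Omega)$, gives $u(0)=u_0$, and yields the regularity demanded in \cref{def:solr}. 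Lower semicontinuity of $\FenR$ controls the left-hand side, the $\varepsilon$-terms vanish by the previous step, and the weighted inequality converges to its unweighted counterpart \cref{eq:sol}.

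\emph{Main obstacle.} The heart of the argument is the last step. Coercivity yields only weak information on $u_\varepsilon$, which cannot identify the limit inside the nonlocal, nonlinear regulariser $\tvr$ nor control the fidelity term — note that $K$ need not have bounded inverse, so the $L^2$ fidelity bound does not by itself bound $u_\varepsilon$ — and the $L^1$ compactness lemma is exactly the tool that upgrades weak to strong convergence. The secondary technical difficulty is reconciling the exponential weight and the second-order-in-time regularisation with the first-order, unweighted form of \cref{eq:sol}, and verifying that the initial datum and the $C^0([0,T];L^2(\Omega))$ continuity persist in the limit.
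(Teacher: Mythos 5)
Your overall skeleton is the paper's: a WIDE (elliptic-in-time) regularisation, an Euler--Lagrange variational inequality by convexity, the $L^1$ compactness lemma plus Banach-valued Arzel\`a--Ascoli, lower semicontinuity, and an integration by parts in time producing the boundary terms of \cref{eq:sol}. But there is a genuine gap at the step you treat as routine: the uniform bounds. Testing $\mathcal{W}_\varepsilon$ against the stationary competitor $v\equiv u_0$ gives only $\int_0^\infty e^{-t/\varepsilon}\bigl(\tfrac{\varepsilon}{2}\norm{\de_t u_\varepsilon(t)}_{L^2(\Omega)}^2+\FenR(u_\varepsilon(t))\bigr)\,dt\le \varepsilon\,\FenR(u_0)$, i.e.\ a bound against a weight that concentrates on a time layer of width $O(\varepsilon)$ near $t=0$. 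This yields neither a bound on $\sup_t\FenR(u_\varepsilon(t))$, nor an $\varepsilon$-uniform bound on $\int_0^T\FenR(u_\varepsilon(t))\,dt$ or on $\int_0^T\norm{\de_t u_\varepsilon}_{L^2(\Omega)}^2\,dt$: for any fixed $t>0$ the information evaporates as $\varepsilon\to0$. Without \emph{unweighted} bounds you cannot invoke \cref{lem:com}, cannot extract weak $L^2$ convergence of $\de_t u_\varepsilon$, and lose the $C^{0,\frac12}([0,T];L^2(\Omega))$ equicontinuity \cref{eq:uniformC0} on which both $u(0)=u_0$ and the passage to the limit in the fidelity term rest. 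The paper closes this gap by testing the reformulated minimality condition with the time-mollified solution, $\Psi=-h\,\de_t[u_\varepsilon]_h$, exploiting $\FenR([u_\varepsilon]_h(t))\le[\FenR(u_\varepsilon)]_h(t)$ (\cref{convfuncR}) and the ODE identity \cref{ODEmolli}: with $\Upsilon\equiv 1$ this gives $\int_0^T\int_\Omega|\de_t u_\varepsilon|^2\,dx\,dt\le\FenR(u_0)$, and with a piecewise-linear cutoff it gives $\int_{t_1}^{t_2}\FenR(u_\varepsilon(t))\,dt\le(t_2-t_1+\varepsilon/2)\FenR(u_0)$, both uniform in $\varepsilon$. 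Nothing of this sort follows from the comparison with $u_0$ alone.

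The second missing idea is the de-weighting mechanism itself, which you flag as a ``secondary technical difficulty'' but do not resolve. The weighted inequality you derive carries $e^{-t/\varepsilon}$ on \emph{both} sides and degenerates entirely as $\varepsilon\to0$ (it localises at $t=0$), so it has no unweighted ``counterpart'' reachable by a direct limit. The paper's device is to perturb by exponentially growing competitors $v_\delta=u_\varepsilon+\delta\,e^{t/\varepsilon}\Upsilon\Psi$ with $0<\delta\le e^{-T/\varepsilon}$: the factor $e^{t/\varepsilon}$ cancels the weight, and dividing by $\delta$ and sending $\delta\to0$ produces the genuinely unweighted reformulation \cref{eq:reformulation}, in which the elliptic regularisation survives only as the explicit remainder $\varepsilon\int_0^T\int_\Omega(\Upsilon'\de_t u_\varepsilon\Psi+\Upsilon\,\de_t u_\varepsilon\,\de_t\Psi)\,dx\,dt$ --- harmless precisely because of the unweighted $L^2$ bound above. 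Taking $\Psi=v-u_\varepsilon$ with the trapezoidal cutoff $\Upsilon$ then yields, after $\varepsilon\to0$ and $\theta\to0$, the boundary terms of \cref{eq:sol} exactly as your integration-by-parts identity anticipates. So your plan is the right one in outline, but the two devices that make it run --- exponentially growing perturbations to kill the weight, and mollified-solution test maps to generate uniform energy estimates --- are absent, and as written both the compactness step and the limit inequality fail. (A minor further point: working on $(0,\infty)$ is fine in principle, but the paper minimises on each finite cylinder $\Omega_T$ and its uniqueness-free globalisation is immediate since variational solutions localise in time, cf.\ \cref{localizesub}.)
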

\begin{thm}[Uniqueness]\label{uniqueness1}
    Suppose that $K$ is a linear blur as in \cref{deblurfunc} and $u_0$ satisfies \cref{finiteenergydeblur}. Then there is at most one global variational solution to the $\tvr$-deblurring flow as defined in \cref{def:solr}. 
\end{thm}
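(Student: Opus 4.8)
The plan is to exploit the convexity of $\FenR$ together with a time-mollification that turns each solution into an admissible competitor, following the scheme for variational solutions in \cite{Boegelein2015}. Suppose $u_1,u_2$ are two global variational solutions and set $w:=u_1-u_2$. The obstruction to simply inserting $u_2$ as the competitor $v$ in \eqref{eq:sol} for $u_1$ is that a variational solution is only known to lie in $\lwbv\cap C^0([0,T];L^2(\Omega))$ and need not possess an $L^2$ time derivative. To repair this I would introduce, for $h>0$, the exponential mollification
\[
[u]_h(t):=e^{-t/h}u_0+\frac1h\int_0^t e^{(s-t)/h}u(s)\,ds,
\]
which satisfies $[u]_h(0)=u_0$, $\partial_t[u]_h=\tfrac1h(u-[u]_h)\in L^2(\Omega_T)$, and which presents $[u]_h(t)$ as a genuine convex combination of $u_0$ and the values $\{u(s)\}_{0\le s\le t}$. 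Since $\tvr$ is convex and $u\mapsto\frac\kappa2\|K[u]-u_0\|_{L^2(\Omega)}^2$ is convex because $K$ is linear, Jensen's inequality gives $\int_0^T\FenR([u]_h)\,dt\le\int_0^T\FenR(u)\,dt+h\,\FenR(u_0)$, so each $[u]_h$ is an admissible competitor in \eqref{eq:sol}.

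The first key step is an a priori estimate on the mollification error, obtained by testing the inequality for $u_i$ against its own mollification $v=[u_i]_h$. Since $\int_0^T\!\!\int_\Omega\partial_t[u_i]_h([u_i]_h-u_i)\,dx\,dt=-\tfrac1h\|u_i-[u_i]_h\|_{L^2(\Omega_T)}^2$ and $[u_i]_h(0)=u_0$, the variational inequality collapses, after the Jensen bound, to
\[
\frac1h\|u_i-[u_i]_h\|_{L^2(\Omega_T)}^2+\frac12\|(u_i-[u_i]_h)(T)\|_{L^2(\Omega)}^2\le h\,\FenR(u_0),
\]
so in particular $h\|\partial_t[u_i]_h\|_{L^2(\Omega_T)}^2=\tfrac1h\|u_i-[u_i]_h\|_{L^2(\Omega_T)}^2\to0$ as $h\to0$. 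This self-comparison estimate is the device that replaces any assumption of time regularity of the solutions, and it crucially uses nothing beyond convexity and the nonpositive sign of the mollified dissipation.

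The second step is to cross-test: insert $v=[u_2]_h$ into the inequality for $u_1$ and $v=[u_1]_h$ into the inequality for $u_2$, and add. The energy contributions on the left cancel against the Jensen bounds on the right up to $2h\,\FenR(u_0)$, and an elementary identity for $\partial_t[u_1]_h([u_1]_h-u_2)+\partial_t[u_2]_h([u_2]_h-u_1)$, using linearity of the mollification ($[u_1]_h-[u_2]_h=[w]_h$) and $[w]_h(0)=0$, rewrites the time-derivative terms as $\tfrac12\|[w]_h(T)\|_{L^2(\Omega)}^2$ plus the cross remainder $-2h\langle\partial_t[u_1]_h,\partial_t[u_2]_h\rangle_{L^2(\Omega_T)}$. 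By Cauchy--Schwarz and the first step the remainder is bounded by $2h\,\FenR(u_0)\to0$. Passing to the limit $h\to0$ and using $[u_i]_h(T)\to u_i(T)$ and $[w]_h(T)\to w(T)$ in $L^2(\Omega)$ (a consequence of $u_i\in C^0([0,T];L^2(\Omega))$), the endpoint contributions $\tfrac12\|[w]_h(T)\|^2-\tfrac12\|([u_2]_h-u_1)(T)\|^2-\tfrac12\|([u_1]_h-u_2)(T)\|^2$ all tend to multiples of $\|w(T)\|_{L^2(\Omega)}^2$ and leave $0\le-\tfrac12\|w(T)\|_{L^2(\Omega)}^2$. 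Hence $u_1(T)=u_2(T)$ for a.e.\ $T$, and continuity in time upgrades this to every $T$, giving global uniqueness.

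The main obstacle is exactly the control of the mollification terms: because variational solutions carry no a priori time regularity, the cross remainder $h\langle\partial_t[u_1]_h,\partial_t[u_2]_h\rangle_{L^2(\Omega_T)}$ does not obviously vanish, and the entire argument hinges on the self-comparison estimate forcing $\tfrac1h\|u_i-[u_i]_h\|_{L^2(\Omega_T)}^2\to0$. The only structural inputs are the convexity of $\FenR$ (for Jensen) and the linearity of $K$; no strict convexity and no regularity of $\partial\Omega$ enters, which is precisely the parabolic mechanism emphasised in the introduction.
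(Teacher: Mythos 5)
Your proof is correct, but it is a genuinely different argument from the paper's. The paper proves \cref{uniqueness1} in one stroke from \emph{strict} convexity: it adds the two variational inequalities for $u_1,u_2$, inserts the midpoint $v=(u_1+u_2)/2$ as comparison map (admissible because \cref{timeregularity1} supplies $\partial_t u_i\in L^2(\Omega_T)$), and obtains $\frac12\int_0^T(\FenR(u_1)+\FenR(u_2))\,dt\leq\int_0^T\FenR\big(\tfrac{u_1+u_2}{2}\big)\,dt$, which contradicts the strict convexity of $\FenR$ inherited from the injectivity of $K$. You instead cross-test each solution against the exponential mollification of the other and absorb all error terms via the self-comparison estimate $\frac1h\|u_i-[u_i]_h\|^2_{L^2(\Omega_T)}\leq h\,\FenR(u_0)$; every step checks out against the paper's toolkit: your Jensen bound is \cref{convfuncR}(ii) combined with \cref{timemollithm1}, the identity $[u]_h-u=-h\,\partial_t[u]_h$ is \cref{ODEmolli}, the cancellation of $\int_0^T\FenR(u_i)\,dt$ is legitimate because variational solutions have finite energy (comparison with the constant-in-time extension of $u_0$), and $[u_i]_h(T)\to u_i(T)$, $[w]_h(T)\to w(T)$ in $L^2(\Omega)$ follow from $u_i\in C^0([0,T];L^2(\Omega))$ with $u_i(0)=u_0$. (One cosmetic point: since a global solution is a variational solution on $\Omega_T$ for \emph{every} $T$, you get $w(T)=0$ for every $T$ directly; no a.e.-then-continuity upgrade is needed.) As for what each approach buys: the paper's midpoint argument is shorter but needs strict convexity — hence injectivity of $K$ — and leans on the time-regularity theorem for admissibility of the midpoint; your argument uses only plain convexity and the definition of variational solution, is independent of the existence and regularity theory, and so applies verbatim to the $\tvG$-denoising flow of \cref{uniqueness2}, where $\FenG$ need not be strictly convex. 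Notably, your proof realizes exactly what the paper's remark following the proof of \cref{uniqueness2} anticipates — a time-mollification uniqueness proof in the spirit of \cite{Prasad2023} that avoids time regularity — whereas the paper's own monotonicity proof of \cref{uniqueness2} (which it observes also covers \cref{uniqueness1}) still integrates by parts using $\partial_t u_i\in L^2(\Omega_T)$ from \cref{timeregularity2}.
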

\begin{thm}[Time Regularity]\label{timeregularity1}
    Suppose that $K$ is a linear blur as in \cref{deblurfunc} and $u_0$ satisfies \cref{finiteenergydeblur}. Then any global variational solution to the $\tvr$-deblurring flow as defined in \cref{def:solr} satisfies:
    \begin{itemize}
        \item[(i)] \[u \in C^{0,\frac{1}{2}}([0,T];L^2(\Omega)) \quad \mbox{ for every } \quad T>0.\]
        \item[(ii)] \[
        \int_0^T\int_{\Omega}|\partial_t u|^2 \,dx\,dS \leq \FenR(u_0) 
        \quad \mbox{ for every } \quad T>0.\]
        \item[(iii)] \[
        \frac{1}{t_2-t_1}\int_{t_1}^{t_2} \FenR(u(t))\,dt \leq \FenR(u_0) \quad 
    \mbox{ for every } \quad 0\leq t_1<t_2<\infty. 
        \] 
    \end{itemize}
\end{thm}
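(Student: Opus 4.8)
The plan is to handle all three parts with one device: the backward exponential time-mollification of the solution, which converts the variational inequality into a tractable differential identity. For $h>0$ set
\[ [u]_h(t) = e^{-t/h}u_0 + \frac{1}{h}\int_0^t e^{(s-t)/h}u(s)\,ds, \]
so that $[u]_h(0)=u_0$ and $[u]_h$ solves $\partial_t[u]_h = \tfrac{1}{h}(u-[u]_h)$ in $L^2(\Omega)$. Two features make this the right comparison map in \cref{eq:sol}. First, $[u]_h(t)$ is a genuine convex combination of $u_0$ and the values $\{u(s)\}_{s\le t}$ (the weights $e^{-t/h}$ and $\tfrac1h e^{(s-t)/h}$ integrate to one), so by convexity and lower semicontinuity of $\FenR$, Jensen's inequality, and Fubini one obtains the absorption estimate
\[ \int_0^T \FenR([u]_h(t))\,dt \le \int_0^T\FenR(u(t))\,dt + h\,\FenR(u_0). \]
Second, $\partial_t[u]_h\in L^2(\Omega_T)$ by construction, so $[u]_h$ is admissible.

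For part (ii) I would insert $v=[u]_h$ into \cref{eq:sol}. Using $v-u=-h\,\partial_t[u]_h$ and the ODE, the mixed term collapses to $\int_0^T\!\int_\Omega \partial_t v\,(v-u)=-h\int_0^T\!\int_\Omega|\partial_t[u]_h|^2$, while the term at $t=0$ vanishes since $[u]_h(0)=u_0$. Applying the absorption estimate and cancelling the common term $\int_0^T\FenR(u)\,dt$ from both sides, the inequality reduces, after dividing by $h$, to the $h$-uniform bound
\[ \int_0^T\!\int_\Omega|\partial_t[u]_h|^2\,dx\,dt + \frac{1}{2h}\|([u]_h-u)(T)\|_{L^2(\Omega)}^2 \le \FenR(u_0). \]
Since $[u]_h\to u$ in $L^2(\Omega_T)$, the $L^2$-bounded family $\partial_t[u]_h$ converges weakly to $\partial_t u$, and weak lower semicontinuity of the $L^2$-norm yields (ii), in particular $\partial_t u\in L^2(\Omega_T)$. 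Part (i) is then immediate: writing $u(t_2)-u(t_1)=\int_{t_1}^{t_2}\partial_t u\,ds$ and applying Cauchy--Schwarz gives $\|u(t_2)-u(t_1)\|_{L^2(\Omega)}\le (t_2-t_1)^{1/2}\FenR(u_0)^{1/2}$, which is precisely the asserted $C^{0,1/2}$ bound.

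For part (iii) the case $t_1=0$ is direct: the constant map $v(x,t)=u_0(x)$ is admissible with $\partial_t v=0$ and $v(0)=u_0$, so \cref{eq:sol} on $[0,t_2]$ (legitimate as $u$ is a global solution) gives $\int_0^{t_2}\FenR(u)\,dt\le t_2\,\FenR(u_0)$. For a general initial time I would first exploit that, once $\partial_t u\in L^2(\Omega_T)$ is known, the identity $\int_0^T\!\int_\Omega\partial_t v(v-u)=\tfrac12\|(v-u)(T)\|^2-\tfrac12\|(v-u)(0)\|^2+\int_0^T\!\int_\Omega\partial_t u(v-u)$ turns \cref{eq:sol} into the boundary-term-free gradient-flow inequality $\int_0^T\!\int_\Omega\partial_t u\,(u-v)+\int_0^T\FenR(u)\le\int_0^T\FenR(v)$, valid for all admissible $v$. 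Choosing the frozen comparison $v(t)=u(\min(t,t_1))$ (admissible for a.e.\ $t_1$, since $\partial_t v=\partial_t u\,\mathbbm 1_{\{t<t_1\}}\in L^2$) makes the contributions on $[0,t_1]$ cancel and reduces the mixed term to $\tfrac12\|u(t_2)-u(t_1)\|^2\ge 0$, leaving $\frac{1}{t_2-t_1}\int_{t_1}^{t_2}\FenR(u)\,dt\le\FenR(u(t_1))$. This says exactly that $t\mapsto\int_0^t\FenR(u)$ is concave, whence $t\mapsto\FenR(u(t))$ coincides a.e.\ with a non-increasing function; combined with the $t_1=0$ estimate (which forces $\FenR(u(0^+))\le\FenR(u_0)$) this yields $\FenR(u(t))\le\FenR(u_0)$ for a.e.\ $t$, and hence (iii) on every interval.

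The main obstacle I anticipate lies not in the formal computations but in two limiting points in the nonsmooth setting. The first is the absorption estimate together with the weak-limit identification: the $\tvr$-part of $\FenR$ is only convex and lower semicontinuous, so Jensen must be applied to the integral representation with care, and the weak limit of $\partial_t[u]_h$ must be pinned down to $\partial_t u$ via the strong convergence $[u]_h\to u$. The second is upgrading the forward-averaged inequality of part (iii) to genuine monotonicity of the energy; this is where convexity of $\FenR$ is used essentially, either through the concavity-of-the-primitive argument above or, equivalently, by reading the gradient-flow inequality as $-\partial_t u(t)\in\partial\FenR(u(t))$ and invoking the chain rule for convex functionals along $H^1(0,T;L^2(\Omega))$ curves.
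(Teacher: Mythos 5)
Parts (i) and (ii) of your argument are correct and are essentially the paper's device: test with the exponentially mollified solution $v=[u]_h$, collapse the mixed term via the identity \cref{ODEmolli}, and absorb $\int_0^T\FenR([u]_h)\,dt\le\int_0^T\FenR(u)\,dt+h\FenR(u_0)$, which is \cref{convfuncR} combined with \cref{timemollithm1} applied to $t\mapsto\FenR(u(t))$. The one structural difference is that the paper runs this computation on subcylinders $\Omega\times(t_1,t_2)$ with the mollification anchored at $u(t_1)$, which yields the stronger dissipation inequality $\int_{t_1}^{t_2}\int_\Omega|\de_t u|^2\,dx\,dt\le\FenR(u(t_1))-\FenR(u(t_2))$ for a.e.\ $t_1<t_2$ (see \cref{l2boundfordetu}); your global version, anchored at $u_0$, gives (ii) more quickly but forgoes exactly this monotonicity information, and that omission is what creates trouble in (iii).

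Part (iii) has a genuine gap at the monotonicity step. Your frozen comparison $v(t)=u(\min(t,t_1))$ correctly yields $\frac{1}{t_2-t_1}\int_{t_1}^{t_2}\FenR(u(t))\,dt\le\FenR(u(t_1))$ for a.e.\ $t_1$, but the passage from there to $\FenR(u(t_1))\le\FenR(u_0)$ is only asserted. With $G(t)=\int_0^t\FenR(u(s))\,ds$, what you have is $G(t_2)\le G(t_1)+(t_2-t_1)G'(t_1)$ at a.e.\ $t_1$ and only for $t_2>t_1$; this one-sided tangency at a.e.\ points is strictly weaker than concavity of $G$, and upgrading it to ``$\FenR(u(\cdot))$ agrees a.e.\ with a non-increasing function'' is a delicate real-variable lemma that you neither prove nor cite. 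Your alternative---reading the inequality as $-\de_t u(t)\in\partial\FenR(u(t))$ and invoking the chain rule for convex functionals---is sound in principle, but the pointwise-in-time inclusion does not follow from the time-integrated inequality without a localization argument (a cutoff construction of precisely the type the paper develops in \cref{localizesub}, together with a Lebesgue-point argument over a sufficiently rich class of competitors), so it does not actually save the work. The paper sidesteps monotonicity entirely: it tests the integrated-by-parts inequality with $v=u+\zeta_{t_1,t_2}([u]_h-u)$, where $\zeta_{t_1,t_2}\equiv 1$ on $[0,t_1]$ and $[u]_h$ is anchored at $u_0$; after convexity and \cref{ODEmolli}, the integration by parts in time produces the boundary term $h\FenR(u_0)$ at $t=0$, and dividing by $h$ and letting $h\to0$ gives
\[
\frac{1}{t_2-t_1}\int_{t_1}^{t_2}\FenR(u(t))\,dt\le\FenR(u_0)-\int_0^{t_2}\int_\Omega\zeta_{t_1,t_2}|\de_t u|^2\,dx\,dt\le\FenR(u_0)
\]
directly, for arbitrary $t_1$. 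If you want to keep your route, the cleanest repair is to first establish the subcylinder property of \cref{localizesub} and rerun your part-(ii) computation with initial time $t_1$: the resulting dissipation inequality \cref{l2boundfordetu} gives $\FenR(u(t_2))\le\FenR(u(t_1))$ for a.e.\ $t_1<t_2$ at once, after which your endpoint argument at $t_1=0$ and the continuity of $t_1\mapsto\int_{t_1}^{t_2}\FenR(u(t))\,dt$ close (iii) for every interval.
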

\subsection{\texorpdfstring{$\tvG$}{tvg}-Denoising Flow} We describe here another nonlocal version of the gradient flow to a BV based image restoration model, viz., the general ROF model. Consider the regression model: $$\mathfrak{R}(u):=\int_{\Omega}\mathbf{R}(x,u(x))\,dx,$$ where 
\begin{align}\label{convexdenoise}\begin{cases}
    \mathbf{R}:\Omega\times\bbR\to[0,\infty) \text{ is a Carath\'eodory integrand}\\
    \text{ such that for a.e. } x\in\Omega \text{ the partial map } \bbR\ni u\mapsto \mathbf{R}(x,u) \text{ is convex.}
\end{cases}
\end{align}
One defines a nonlocal version of the generalised (ROF) model by defining the functional
\begin{align}
    \FenG(u):=\tvG(u,\bbr)+\mathfrak{R}(u)=\tvG(u,\bbr)+\int_{\Omega}\mathbf{R}(x,u(x))\,dx,
\end{align} where $\tvG(\cdot)$ is a version of nonlocal BV space based on a Gagliardo-norm setting. The definition of this space will appear in the next section. In this article, we consider a gradient flow to this model. A particular difficulty arises in the specification of boundary data which is already a challenge in the local case. In the paper of \cite{Boegelein2015}, which we intend to generalise, the boundary data is prescribed on a thick boundary. This, of course, is familiar in the nonlocal settings where data is generally prescribed on complements of domains on which the PDE is posed. As a result, we will choose to work with nonlocal BV functions that are ``fixed-in-time" in the complement of the domain $\Omega$.

We will assume that the initial datum $u_0\in L^2(\Omega)\cap \bvG(\bbr)$ has a finite $\mathfrak{R}$-energy, i.e.,
\begin{align}\label{finiteRenergy}
    \mathfrak{R}(u_0)=\int_\Omega \Ren(x,u_0(x))\,dx<\infty,
\end{align} which leads to a finite $\FenG$-energy for $u_0$, i.e.,
\begin{align}\label{finiteRenergy2}
    \FenG(u_0)=\tvG(u_0,\bbr)+\int_\Omega \Ren(x,u_0(x))\,dx<\infty.
\end{align}
It is worthwhile to remember that $u_0$ is defined on $\bbr$ and $\Ren$ is defined on $\Omega\times\bbR$.

\begin{defn}\label{def:solg}
    Assume that the initial-boundary data $u_0\in L^2(\Omega)\cap \bvG(\bbr)$ satisfies \cref{finiteRenergy}. A function $u: \bbr\times[0,T] \rightarrow \mathbb{R}$ for $0<T<\infty$ is called a variational solution in $\Omega_T$ for the $\tvG$-denoising flow provided
    \[
    u \in \lwbvGz \cap C^0([0,T];L^2(\Omega))
    \]
    and the following variational inequality holds:
    \begin{gather}
    \label{eq:solg}
    \begin{aligned}
        \int_0^T \FenG(u(t)) \,dt \leq& \int_0^T\int_{\Omega}\partial_t v(v-u) \,dx \,dt + \int_0^T \FenG(v(t))\,dt\\
        &-\frac{1}{2}\|(v-u)(T)\|_{L^2(\Omega)}^2+\frac{1}{2}\|v(0)-u_0\|_{L^2(\Omega)}^2
    \end{aligned}
    \end{gather}
    for any $v\in \lwbvGz$ with $\partial_tv\in L^2(\Omega_T)$ and $v(0) \in L^2(\Omega)$. If $u$ is a variational solution in $\Omega_T$ for every $T>0$, we say that $u$ is a global variational solution. 
\end{defn}

The initial-boundary datum $u_0$ is a valid comparison map in the definition of variational solution as in \cref{def:solg} once we define its time independent extension $v(x,t)=u_0(x)$ for every $t\in [0,T]$. As a consequence, we obtain the following property of variational solutions. Due to the finite $\FenG$-energy of $u_0$, once we substitute the time-independent extension of $u_0$ as a comparison map in \cref{def:solg}, we obtain a finite energy bound for any solution $u$ when it exists:
\begin{align}
    \int_0^T\FenG(u(t))\,dt\leq \int_0^T\FenG(u_0)\,dt<\infty.
\end{align}

The following are the main results regarding the $\tvG$-denoising flow.

\begin{thm}[Existence]\label{existence2}
    Suppose that $\Ren(x,u)$ is a Carath\'eodory integrand as in \cref{convexdenoise} and $u_0$ satisfies \cref{finiteRenergy}. Then there exists a global variational solution $u$ to the $\tvG$-denoising flow in the sense of \cref{def:solg}.
\end{thm}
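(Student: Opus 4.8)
My plan is to realise $u$ by the weighted dissipation (WIDE) scheme adapted to the nonlocal setting: an elliptic-in-time regularisation whose minimisers exist by the direct method, followed by a passage to the limit powered by the nonlocal $L^1$-compactness of \cref{lem:com} (resp. \cref{lem:com2}), in the spirit of \cite{Boegelein2015, Prasad2023, Stefanelli2024}. Concretely, for $\varepsilon\in(0,1)$ I would minimise, over the admissible maps $v$ (those with $v(0)=u_0$, with $v=u_0$ on $\bbr\setminus\Om$ for a.e. $t$, with $\de_t v\in L^2$, and with finite energy below), the weighted energy-dissipation functional
\[
\mathbf W_\varepsilon[v]=\int_0^\infty e^{-t/\varepsilon}\left(\frac12\int_\Om|\de_t v|^2\,dx+\frac1\varepsilon\,\FenG(v(t))\right)dt.
\]
Fixing $v=u_0$ in $\bbr\setminus\Om$ replaces the extension procedure needed for the Riesz space, which is precisely the simplification recorded before \cref{def:solg}. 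Since $\tvG(\cdot,\bbr)$ is a seminorm and, by \cref{convexdenoise}, $u\mapsto\Ren(x,u)$ is convex, $\FenG$ is convex, so $\mathbf W_\varepsilon$ is convex, coercive and lower semicontinuous.

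For the regularised minimiser, the finite-energy assumption \cref{finiteRenergy} makes the time-independent extension of $u_0$ admissible with $\mathbf W_\varepsilon[u_0]=\FenG(u_0)<\infty$, so the infimum is finite. For a minimising sequence the weighted bound on $\de_t v$ together with $v(t)=u_0+\int_0^t\de_t v\,ds$ controls $v$ in $C([0,T];L^2(\Om))$, while the $\tvG$-bound controls it spatially; lower semicontinuity of $\tvG$ under $L^1_{\mathrm{loc}}$-convergence and of the convex Carathéodory integral $\mathfrak{R}$ then yields a minimiser $u_\varepsilon$.

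Next I would extract the Euler–Lagrange relation. Convexity and minimality show $u_\varepsilon$ solves, weakly and pointwise in $t$ on $(0,\infty)$, the elliptic-in-time inclusion $\de_t u_\varepsilon-\varepsilon\de_{tt}u_\varepsilon+\de\FenG(u_\varepsilon)\ni0$ with $u_\varepsilon(0)=u_0$; crucially the exponential weight cancels in this relation, leaving the first-order term $\de_t u_\varepsilon$ that encodes the flow and an $\varepsilon$-order correction $\varepsilon\de_{tt}u_\varepsilon$. Using the subgradient inequality $\FenG(v)\ge\FenG(u_\varepsilon)+\inp{\de\FenG(u_\varepsilon)}{v-u_\varepsilon}$ pointwise in $t$, substituting the inclusion, rewriting $\inp{\de_t u_\varepsilon}{v-u_\varepsilon}=\int_\Om\de_t v\,(v-u_\varepsilon)\,dx-\tfrac12\tfrac{d}{dt}\norm{v-u_\varepsilon}_{L^2(\Om)}^2$, and integrating over $(0,T)$ gives a regularised form of \cref{eq:solg} carrying the correction $-\varepsilon\int_0^T\inp{\de_{tt}u_\varepsilon}{v-u_\varepsilon}\,dt$. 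Testing minimality against $v\equiv u_0$ yields the dissipation estimate $\int_0^\infty e^{-t/\varepsilon}\big(\tfrac12\norm{\de_t u_\varepsilon}_{L^2(\Om)}^2+\tfrac1\varepsilon\FenG(u_\varepsilon)\big)\,dt\le\FenG(u_0)$, and the (approximate) monotonicity of $t\mapsto\FenG(u_\varepsilon(t))$ promotes this to $\varepsilon$-uniform bounds for $\FenG(u_\varepsilon(t))$ on $[\delta,T]$ and for $u_\varepsilon$ in $C([0,T];L^2(\Om))$.

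The genuine difficulty is \emph{compactness}: a bound on $\tvG$ is a nonlocal-$BV$ bound and, unlike the classical $BV$ bound, does not self-improve to strong $L^1$ compactness — exactly the obstruction flagged for these spaces in \cite{Antil2024, Antil2024a}. I would overcome it with \cref{lem:com}/\cref{lem:com2}, the nonlocal analogue of the Simon/\cite{Boegelein2015} lemma built from Banach-valued Arzelà–Ascoli \cite{Lang1993} and the embeddings of \cite{Antil2024}, to extract $u_\varepsilon\to u$ strongly in $C([0,T];L^2(\Om))$ (and in $L^1$ in space on compact time intervals), with $u\in\lwbvGz$. Passing $\varepsilon\to0$ then produces \cref{eq:solg}: lower semicontinuity of $\FenG$ controls the left-hand side, the strong $L^2$-convergence passes the term $\int_0^T\int_\Om\de_t v\,(v-u)\,dx\,dt$ and the boundary pieces $-\tfrac12\norm{(v-u)(T)}_{L^2(\Om)}^2+\tfrac12\norm{v(0)-u_0}_{L^2(\Om)}^2$, and the dissipation estimate forces the elliptic-in-time correction to vanish. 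As every finite $T$ is admissible, the limit $u$ is a global variational solution in the sense of \cref{def:solg}. The bookkeeping that makes the $O(\varepsilon)$ correction and the energy monotonicity rigorous, and above all the nonlocal compactness, are where the real work concentrates.
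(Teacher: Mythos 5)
Your overall architecture --- the weighted energy-dissipation functional, the direct method for the regularised minimiser, the nonlocal $L^1$-compactness of \cref{lem:com2}, and lower semicontinuity to pass $\varepsilon\to 0$ --- is exactly the paper's. But your middle step contains a genuine gap: you propose to extract the pointwise-in-time Euler--Lagrange inclusion $\de_t u_\varepsilon-\varepsilon\de_{tt}u_\varepsilon+\partial\FenG(u_\varepsilon)\ni 0$ and then argue through the subgradient inequality. For the nonsmooth functional $\FenG$ --- a total-variation seminorm plus a convex Carath\'eodory integrand with \emph{no} growth assumptions, \cref{convexdenoise} --- on the nonreflexive space $\bvG$, minimality of $u_\varepsilon$ does not give you, a priori, either $\de_{tt}u_\varepsilon\in L^2(\Omega_T)$ or a measurable selection of $\partial\FenG(u_\varepsilon(t))$ pointwise in $t$; establishing the strong inclusion is itself a nontrivial regularity statement, and your subsequent ``approximate monotonicity of $t\mapsto\FenG(u_\varepsilon(t))$'' (on which your pointwise bounds on $[\delta,T]$ rest) would need a chain rule for $\partial\FenG$ along $u_\varepsilon$ that you have not justified. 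The paper never forms the Euler--Lagrange equation: it perturbs by $v_\delta=u_\varepsilon+\delta e^{t/\varepsilon}\Upsilon\Psi$ and uses only convexity to reach the inequality-form reformulation of minimality \cref{eq:reformulation}, in which the weight cancels but only \emph{first-order} time derivatives appear; the uniform energy bounds then come from the admissible mollified choice $\Psi=-h\de_t[u_\varepsilon]_h$ combined with \cref{ODEmolli} and \cref{convfuncG} (note the paper obtains only the integral bound $\int_{t_1}^{t_2}\FenG(u_\varepsilon(t))\,dt\leq(t_2-t_1+\varepsilon/2)\FenG(u_0)$, which suffices), and the variational inequality \cref{eq:solg} follows by taking $\Psi=v-u_\varepsilon$ with a trapezoidal cutoff $\Upsilon$. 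Your plan is repairable by replacing the inclusion step with this inequality-only argument.

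Two further points specific to \cref{existence2} that your write-up glosses over. First, since $\Ren$ carries no growth bound, finiteness of the weighted functional is \emph{not} automatic on the natural admissible class (unlike the deblurring case, where $v\in L^2(\Omega_T)$ suffices); the paper therefore minimises over $\{u\in\mathcal{K}_0:\mathcal{F}_\varepsilon(u)<\infty\}$ and must verify finiteness separately for every comparison map --- in particular for $\Psi=-h\de_t[u_\varepsilon]_h$, where $u_\varepsilon+\Psi=[u_\varepsilon]_h$ and \cref{convfuncG} does the job, and for general $\Psi$ one adds the hypothesis $\int_0^T\FenG((u_\varepsilon+\Psi)(t))\,dt<\infty$ to the reformulated minimality condition. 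Second, the complement constraint $u=u_0$ on $(\bbr\setminus\Omega)\times(0,T)$ must be checked to survive both the minimisation and the limit $\varepsilon\to 0$; the paper gets this from the a.e.\ convergence supplied by \cref{lem:com2}, and for the mollified maps from the explicit computation in \cref{convtvGfunc}. Your choice to minimise over $(0,\infty)$ rather than over each finite cylinder $(0,T)$ is cosmetic and harmless.
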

\begin{thm}[Uniqueness]\label{uniqueness2}
    Suppose that $\Ren(x,u)$ is a Carath\'eodory integrand as in \cref{convexdenoise}, and $u_0$ satisfies \cref{finiteRenergy}. Then there exists at most one global variational solution $u$ $\tvG$-denoising flow in the sense of \cref{def:solg}.
\end{thm}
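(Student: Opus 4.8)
The plan is to run the classical parabolic monotonicity argument for $L^2$-gradient flows of a convex energy, the point being that it uses only convexity of $\FenG$ and never strict convexity. Morally, if two solutions $u_1,u_2$ had enough time regularity to be admissible comparison maps for one another, then inserting $v=u_2$ into the inequality for $u_1$ and $v=u_1$ into the inequality for $u_2$ and adding would cancel the two energy integrals $\int_0^T\FenG(u_i)\,dt$ exactly, leaving
\[
\begin{aligned}
0\le{}&\int_0^T\!\!\int_\Omega\partial_t(u_2-u_1)\,(u_2-u_1)\,dx\,dt-\|(u_1-u_2)(T)\|_{L^2(\Omega)}^2\\
&+\tfrac12\|u_1(0)-u_0\|_{L^2(\Omega)}^2+\tfrac12\|u_2(0)-u_0\|_{L^2(\Omega)}^2 .
\end{aligned}
\]
Since the first term equals $\tfrac12\|(u_1-u_2)(T)\|_{L^2(\Omega)}^2-\tfrac12\|(u_1-u_2)(0)\|_{L^2(\Omega)}^2$, this forces $\|(u_1-u_2)(T)\|_{L^2(\Omega)}^2\le 0$ once the initial data are attained. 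The difficulty is purely that a variational solution is only assumed continuous into $L^2(\Omega)$, so $u_2$ is not a priori admissible in \cref{def:solg}.

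To legitimise this I would regularise in time by the exponential mollification
\[
[w]_h(t):=e^{-t/h}u_0+\frac1h\int_0^t e^{(s-t)/h}w(s)\,ds,\qquad h>0,
\]
which obeys $[w]_h(0)=u_0$ and $\partial_t[w]_h=\tfrac1h(w-[w]_h)\in L^2(\Omega_T)$. First I would check that $[u_1]_h,[u_2]_h$ are admissible in \cref{def:solg}: each value $[u_i]_h(t)$ is a probability average of $u_0$ and of $\{u_i(s)\}_{s\le t}$, hence lies in $\bvG(\Omega;u_0)$ (the prescribed exterior datum $u_0$ is reproduced by the averaging), belongs to $\lwbvGz$, and has the required $L^2$ time derivative.

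I would then test the inequality for $u_1$ with $v=[u_2]_h$, the one for $u_2$ with $v=[u_1]_h$, and add. Because $[u_i]_h(0)=u_0$, every term $\tfrac12\|v(0)-u_0\|_{L^2(\Omega)}^2$ drops out. For the energies, Jensen's inequality for the convex functional $\FenG$ along the probability kernel defining $[\cdot]_h$ gives $\int_0^T\FenG([u_i]_h)\,dt\le h\,\FenG(u_0)+\int_0^T(1-e^{(t-T)/h})\FenG(u_i(t))\,dt$, whence $\limsup_{h\to0}\int_0^T\FenG([u_i]_h)\,dt\le\int_0^T\FenG(u_i)\,dt$; thus the combined energy difference is asymptotically non-negative and may be discarded. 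This single appeal to Jensen is the only place convexity enters, and strictness is never needed. Writing $u_i=[u_i]_h+h\,\partial_t[u_i]_h$, the coupled time-derivative terms reduce to the exact identity
\[
\begin{aligned}
&\int_0^T\!\!\int_\Omega\Big(\partial_t[u_2]_h\,([u_2]_h-u_1)+\partial_t[u_1]_h\,([u_1]_h-u_2)\Big)\,dx\,dt\\
&\qquad=\tfrac12\big\|([u_1]_h-[u_2]_h)(T)\big\|_{L^2(\Omega)}^2-2h\!\int_0^T\!\!\int_\Omega\partial_t[u_1]_h\,\partial_t[u_2]_h\,dx\,dt,
\end{aligned}
\]
the boundary contribution at $t=0$ vanishing since $[u_1]_h(0)=[u_2]_h(0)=u_0$.

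The hard part is the last summand: a merely continuous solution does not force $\tfrac1h\|u_i-[u_i]_h\|_{L^2(\Omega_T)}$ to stay bounded, so this $O(1)$-looking term will not vanish by soft arguments alone—this is precisely where uniqueness becomes entangled with time regularity. Here I would invoke the estimate $\partial_t u_i\in L^2(\Omega_T)$, valid for \emph{any} variational solution by the same reasoning that yields \cref{timeregularity1}(ii), together with the attainment of the initial datum $u_i(0)=u_0$. Under these, $\partial_t[u_i]_h$ is the zero-anchored exponential average of $\partial_t u_i$ and hence contracts, $\|\partial_t[u_i]_h\|_{L^2(\Omega_T)}\le\|\partial_t u_i\|_{L^2(\Omega_T)}$, so that
\[
\Big|2h\!\int_0^T\!\!\int_\Omega\partial_t[u_1]_h\,\partial_t[u_2]_h\,dx\,dt\Big|\le 2h\,\|\partial_t u_1\|_{L^2(\Omega_T)}\|\partial_t u_2\|_{L^2(\Omega_T)}\xrightarrow[h\to0]{}0.
\]
Letting $h\to0$, continuity of the $u_i$ and the approximate-identity property of $[\cdot]_h$ at $t=T$ give $[u_i]_h(T)\to u_i(T)$ in $L^2(\Omega)$, and the assembled inequality collapses to $\|(u_1-u_2)(T)\|_{L^2(\Omega)}^2\le 0$. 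Hence $u_1(T)=u_2(T)$ for every $T>0$, and since both solutions lie in $C^0([0,\infty);L^2(\Omega))$ we conclude $u_1\equiv u_2$. I expect the genuine obstacle to be exactly this coupling: the monotone cancellation is immediate from convexity, but converting the formal computation into a closed inequality requires the quantitative control $\partial_t u_i\in L^2(\Omega_T)$ and the initial-datum attainment for arbitrary variational solutions.
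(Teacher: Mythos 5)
Your proof is correct, and its engine is the same one the paper runs on: the Lions--Zhou monotonicity argument, which uses convexity only (never strictness), fuelled by the time regularity $\partial_t u_i\in L^2(\Omega_T)$ valid for \emph{every} global variational solution (for the denoising flow this is \cref{timeregularity2}(ii) rather than \cref{timeregularity1}, a harmless citation slip) together with attainment of the initial datum. Where you genuinely diverge is the implementation. The paper, once $\partial_t u\in L^2(\Omega_T)$ is in hand, integrates by parts in \cref{eq:solg} to reach
\begin{align*}
\int_0^\tau \FenG(u(t))\,dt \leq \int_0^\tau\int_\Omega \partial_t u\,(v-u)\,dx\,dt + \int_0^\tau \FenG(v(t))\,dt,
\end{align*}
at which point the other solution $\tilde u$ is \emph{itself} an admissible comparison map, and adding the two symmetrised inequalities yields $0\leq -\tfrac12\int_0^\tau\int_\Omega \partial_t(u-\tilde u)^2\,dx\,dt$ with no mollification whatsoever. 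You instead keep \cref{eq:solg} intact, insert the exponential mollification of each solution into the inequality of the other, handle the energies by Jensen (exactly \cref{convfuncG}(ii) plus the elementary integrated bound, which needs only the finite-energy property of solutions), and dispose of the $O(h)$ cross term $2h\int_0^T\int_\Omega \partial_t[u_1]_h\,\partial_t[u_2]_h\,dx\,dt$ via the contraction $\|\partial_t[u_i]_h\|_{L^2(\Omega_T)}\leq\|\partial_t u_i\|_{L^2(\Omega_T)}$ --- and you correctly note that this representation of $\partial_t[u_i]_h$ as an averaged $\partial_t u_i$ is legitimate precisely because the anchor $u_0$ coincides with $u_i(0)$, i.e.\ because the initial datum is attained. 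Your identity for the coupled derivative terms, the vanishing of the $t=0$ contributions, and the final bookkeeping ($\tfrac12 d^2+\tfrac12 d^2-\tfrac12 d^2\leq o(1)$ with $d=\|(u_1-u_2)(T)\|_{L^2(\Omega)}$) all check out, as does the admissibility of $[u_i]_h$ via \cref{convtvGfunc}. What each route buys: the paper's is shorter once \cref{timeregularity2} is available; your mollified route is the natural template for a uniqueness proof that would bypass time regularity entirely --- the very possibility raised in the remark closing \cref{sec:uni} --- but as executed it does not achieve that extra generality, since the cross term is exactly where you re-import $\partial_t u_i\in L^2(\Omega_T)$, and you diagnose this dependence accurately yourself.
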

\begin{thm}[Time Regularity]\label{timeregularity2}
    Suppose that $\Ren(x,u)$ is a Carath\'eodory integrand as in \cref{convexdenoise} and $u_0$ satisfies \cref{finiteRenergy}. Then any global variational solution $u$ $\tvG$-denoising flow as in \cref{def:solg} satisfies:
    \begin{enumerate}
        \item[(i)] $\partial_t u\in L^2(\Omega) \mbox{ and }u \in C^{0,\frac{1}{2}}([0,T];L^2(\Omega))\mbox{ for every } T>0.$
        \item[(ii)] $\int_0^T\int_{\Omega}|\partial_t u|^2 \,dx\,dS \leq \FenG(u_0) \mbox{ for every } T>0.$
        \item[(iii)] For every $t_1,t_2\in\bbR$ satisfying $0\leq t_1<t_2<\infty$, it holds that
        \begin{align}
            \frac{1}{t_2-t_1}\int_{t_1}^{t_2} \FenG(u(t))\,dt \leq \FenG(u_0).
        \end{align}
    \end{enumerate}
\end{thm}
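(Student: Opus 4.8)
The plan is to transplant the comparison-map technique of \cite{Boegelein2015} from the local $BV$ setting to the nonlocal Gagliardo $BV$ setting. The entire argument rests on four structural facts about $\FenG$: it is convex (the regularisation part $\tvG(\cdot,\bbr)$ is a seminorm, hence convex, and $\mathfrak{R}$ is convex by convexity of $\mathbf{R}$ in its second slot, \eqref{convexdenoise}); it is autonomous; it is sequentially weakly lower semicontinuous along $L^2$-convergent sequences; and, crucially for the nonlocal case, the admissible class $\lwbvGz$ is stable under averaging in time. This last point is what makes the nonlocal ``fixed-in-$\bbr\setminus\Omega$'' constraint harmless, since any time-average of a function that equals $u_0$ on $\bbr\setminus\Omega$ for every $t$ again equals $u_0$ there. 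I would record these facts first, invoking the lower semicontinuity and convexity of $\tvG$ established earlier, before running the comparison arguments.

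For part (ii) I would use the exponential (Landes) mollification
\[
[u]_h(t) = e^{-t/h}u_0 + \tfrac1h\int_0^t e^{(s-t)/h}u(s)\,ds,
\]
which satisfies $[u]_h(0)=u_0$ and $\partial_t[u]_h = \tfrac1h(u-[u]_h)\in L^2(\Omega_T)$, lies in $\lwbvGz$ by the stability noted above, and obeys $\FenG([u]_h(t)) \le e^{-t/h}\FenG(u_0)+\tfrac1h\int_0^t e^{(s-t)/h}\FenG(u(s))\,ds$ by Jensen. Inserting $v=[u]_h$ into \eqref{eq:solg}, the initial boundary term vanishes, the cross term collapses to $-\tfrac1h\int_0^T\|[u]_h-u\|_{L^2}^2\,dt\le 0$, and integrating the Jensen bound gives $\int_0^T\FenG([u]_h)\,dt \le h\FenG(u_0)+\int_0^T\FenG(u)\,dt$. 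After cancelling the finite quantity $\int_0^T\FenG(u)\,dt$ one is left with $\tfrac1h\int_0^T\|[u]_h-u\|_{L^2}^2\,dt \le h\FenG(u_0)$, equivalently $\int_0^T\|\partial_t[u]_h\|_{L^2}^2\,dt \le \FenG(u_0)$ uniformly in $h$. Passing to the weak $L^2(\Omega_T)$ limit and using weak lower semicontinuity of the norm yields $\partial_t u\in L^2(\Omega_T)$ with $\int_0^T\int_\Omega|\partial_t u|^2\,dx\,dt \le \FenG(u_0)$, which is (ii).

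Part (i) is then immediate: since $u\in C^0([0,T];L^2(\Omega))$ and $\partial_t u\in L^2(\Omega_T)$, the identity $u(t_2)-u(t_1)=\int_{t_1}^{t_2}\partial_t u\,dt$ with Cauchy--Schwarz gives $\|u(t_2)-u(t_1)\|_{L^2}\le |t_2-t_1|^{1/2}\FenG(u_0)^{1/2}$. For the energy estimate (iii) I would use that a global solution is a solution on $\Omega_{t_2}$ for each $t_2$, and test \eqref{eq:solg} on $[0,t_2]$ with the comparison map equal to $u$ on $[0,t_1]$ and frozen at $v(\cdot,t)=u(\cdot,t_1)$ on $[t_1,t_2]$ (admissible for a.e.\ $t_1$, where $\FenG(u(t_1))<\infty$, using $\partial_t u\in L^2$ on $[0,t_1]$ from (ii)). The boundary terms drop or have favourable sign and the $[0,t_1]$ contributions cancel, leaving $\frac{1}{t_2-t_1}\int_{t_1}^{t_2}\FenG(u)\,dt \le \FenG(u(t_1))$; the case $t_1=0$ already gives $\frac1{t_2}\int_0^{t_2}\FenG(u)\,dt\le\FenG(u_0)$. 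A standard real-variable argument (the primitive $t\mapsto\int_0^t\FenG(u)$ lies below each of its right tangents, hence is concave, so $t\mapsto\FenG(u(t))$ is essentially non-increasing) upgrades this to $\FenG(u(t))\le\FenG(u_0)$ for a.e.\ $t$, whence (iii) on every interval.

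The main obstacle is not the abstract scheme---by now standard---but verifying that each comparison map produced by these time operations genuinely lives in $\lwbvGz$ and that the structural inequalities for $\FenG$ survive the nonlocal passage. Concretely, I must confirm that $\tvG(\cdot,\bbr)$ is convex and $L^2$-lower semicontinuous in precisely the form needed to justify both the Jensen estimate for $[u]_h$ and the weak limit $h\to0$, and that time-averaging respects the complement constraint so the boundary datum $u_0$ is preserved. This is exactly where the argument departs from \cite{Boegelein2015}: in the local case these are classical properties of the $BV$ seminorm, whereas here they must be read off from the definition of the Gagliardo total variation and its lower semicontinuity. Once these compatibility facts are in place, the comparison computations proceed as above without further difficulty.
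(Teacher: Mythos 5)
Your proposal is correct, and for parts (ii) and (iii) it takes a genuinely different route from the paper. The paper does not test on all of $[0,T]$ with the mollifier anchored at $u_0$: it first uses the localization of \cref{localizesub} to show that $\tilde u(s)=u(s+t_1)$ is a variational solution on $\Omega\times(t_1,t_2)$ with initial datum $u(t_1)$, then tests with $[\tilde u]_h$ anchored at $u(t_1)$; this yields the stronger dissipation inequality $\int_{t_1}^{t_2}\int_\Omega|\de_t u|^2\,dx\,dt\le \FenG(u(t_1))-\FenG(u(t_2))$ for a.e.\ $t_1<t_2$, which already encodes the essential monotonicity of $t\mapsto\FenG(u(t))$. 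Your global version with $v=[u]_h$ is simpler (no localization is needed, since a global solution is by definition a solution on each $\Omega_{t_2}$), but it only gives the bound by $\FenG(u_0)$ and forfeits the decay information. For (iii) the paper, once $\de_t u\in L^2(\Omega_T)$ is known, integrates by parts in \cref{eq:solg} and tests with $v=u+\zeta_{t_1,t_2}([u]_h-u)$, using convexity, \cref{convfuncG} and \cref{ODEmolli}, then divides by $h$ and sends $h\to0$, obtaining (iii) directly together with an extra nonnegative dissipation term; you instead freeze the solution at time $t_1$, obtain $\frac{1}{t_2-t_1}\int_{t_1}^{t_2}\FenG(u(t))\,dt\le\FenG(u(t_1))$ for a.e.\ $t_1$, and upgrade by a real-variable argument. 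That upgrade is the one place where your sketch is thin: ``below each right tangent at a.e.\ base point implies concave'' is not a one-liner, but it is true in your setting and worth writing out. With $g(t)=\FenG(u(t))$ and $G(t)=\int_0^t g$, for each fixed $t$ the chord slope $\Lambda(s)=\frac{G(t)-G(s)}{t-s}$ is absolutely continuous on $[0,t)$ with $\Lambda'(s)=\frac{\Lambda(s)-g(s)}{t-s}\le 0$ a.e.\ by your inequality, so chord slopes are non-increasing in the left endpoint, which forces concavity of $G$; then $\frac{G(t_2)-G(t_1)}{t_2-t_1}\le\frac{G(t_1)-G(0)}{t_1}\le\FenG(u_0)$ by your $t_1=0$ case, which is exactly (iii). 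The remaining ingredients you list are precisely the paper's: $[u]_h\in\lwbvGz$ with $\FenG([u]_h(t))\le[\FenG(u)]_h(t)$ from \cref{convtvGfunc} and \cref{convfuncG}, the identity $\de_t[u]_h=-\frac1h([u]_h-u)$ from \cref{ODEmolli}, preservation of the complement constraint under time averaging, $u(0)=u_0$ from the initial-condition lemma, and finiteness of $\int_0^T\FenG(u(t))\,dt$ (from testing with the time-independent extension of $u_0$) to justify the cancellation before the $h\to0$ limit; part (i) is identical to the paper's Cauchy--Schwarz argument. In short: the paper buys monotone energy decay structurally, while your route buys a shorter admissibility analysis at the cost of a separate (but valid) concavity lemma.
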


\begin{rem}
    We have defined and proved properties of solutions to gradient flows for $\tvG$ functionals coupled with a denoising term and for $\tvr$ functionals coupled with a deblurring term. One can, of course, consider more varieties of functionals such as $\tvG$ functional with a beblurring term and $\tvr$ with denoising or other combinations of those. The techniques here would also apply to those models.
\end{rem}

\section{Notation and Preliminaries}
\subsection{Riesz BV}\label{sec:riesz}

We define the Riesz-BV spaces as follows. Given $\alpha\in (0,n)$, we let
\begin{align}
    I_{\alpha}f(x):=2^{-\alpha}\pi^{-n/2}\frac{\Gamma\left(\frac{n-\alpha}{2}\right)}{\Gamma\left(\frac{\alpha}{2}\right)}\int_{\bbr}\frac{f(y)}{|x-y|^{n-\alpha}}\,dy,\,x\in\bbr
\end{align} be the Riesz potential of order $\alpha$ of $f\in C^{\infty}_c(\bbr,\bbr)$. One way to define a fractional divergence is
\begin{align*}
    \divr f = \text{div}I_{1-\alpha}f.
\end{align*}
It may be seen that for any measurable function $F:\mathbb{R}^N\rightarrow \mathbb{R}^N$ and $\alpha \in (0,1]$ one has
\[
\nabla_R^{\alpha} \cdot F (x) = C(N,\alpha)\int_{\mathbb{R}^N}\frac{\left(F(x)-F(y)\right) \cdot \left(x-y\right)}{|x-y|^{N+\alpha+1}}.
\]
More details about the Riesz type fractional derivatives as well as motivation may be found in \cite{Shieh2014}. For $f \in L^1(\mathbb{R}^N)$ and $\alpha \in (0,1]$ we define the Riesz total variation of $f$ by
\[
TV_{R}^{\alpha}(f;\mathbb{R}^N) = \sup\left\{\int_{\mathbb{R}^N} f\nabla_R^{\alpha} \cdot \Psi \,dx : \Psi \in C_c^1(\mathbb{R}^N;\mathbb{R}^N), |\Psi|_{\infty} \leq 1\right\}.
\]
For $\Omega \subset \mathbb{R}^N$ and $f \in L^1(\Omega)$ we define
\[
TV_{R}^{\alpha}(f;\Omega) = TV_{R}^{\alpha}(f\mathbbm{1}_{\Omega};\mathbb{R}^N)
\]
where $\mathbbm{1}_{\Omega}$ is the indicator function of the set $\Omega$ and $f\mathbbm{1}_{\Omega}$ is the extension by zero of $f$ to $\mathbb{R}^N$. For $\Omega \subset \mathbb{R}^N$ we define the Riesz-BV spaces by
\[
\bvr(\Omega) = \left\{f \in L^1(\mathbb{R}^N): f = 0 \mbox{ in } \mathbb{R}^N\setminus\Omega, TV_{R}^{\alpha}(f;\Omega) < +\infty\right\}.
\]
with the norm:
\[
\|f\|_{\bvr(\Omega)} = \|f\|_{L^1} + TV_{R}^{\alpha}(f;\Omega).
\]
For ease of notation, we write $TV_{R}^{\alpha}(f;\Omega)$ as $TV_{R}^{\alpha}(f)$. Finally, as noted in \cite[Corollary 2.7]{Antil2024}, $\bvr(\Omega)$ is a Banach space.

\subsection{Gagliardo BV spaces}\label{sec:gag} In order to define the Gagliardo type fractional BV spaces, one needs the Gagliardo type fractional derivative $\textbf{d}_\alpha$ which takes an $\mathcal{L}^N$-measurable function $f:\mathbb{R}^N\to\mathbb{R}$ into a non-local vector field \[(\textbf{d}_\alpha f)(x,y):=\frac{f(x)-f(y)}{|x-y|^\alpha}.\] Here $\mathcal{L}^N$ refers to the $N$-dimensional Lebesgue measure. Arguing through duality (as in \cite{Antil2024}), one arrives at the following definition for the Gagliardo divergence of a nonlocal vector field. Let $F:\bbr\times\bbr\to\bbR$ be a $\mathcal{L}^N\times\mathcal{L}^N$ measurable map, then its Gagliargo divergence is defined to be
\[\divG F(x):=-\int_\bbr \frac{F(x,y)-F(y,x)}{|x-y|^{N+\alpha}}\,dy.\]

\begin{defn}
    Let $f\in \lone_{\textbf{loc}}(\bbr)$. We define the Gagliargo total variation of $f$ in $\bbr$ as 
    \begin{align*}
        \tvG(f;\bbr) = \sup\left\{ \int_\bbr f(x) (\divG \Phi)(x)\,dx: \Phi\in C_c^\infty(\bbr\times\bbr), \|\Phi\|_{L^\infty(\bbr\times\bbr)} \leq 1 \right\}
    \end{align*}
    We say that $f\in \bvG(f;\bbr)$ if $\|f\|_{\bvG(\bbr)}: = \|f\|_{\lone(\bbr)}+\tvG(f;\bbr)$ is finite. 

    In the future, we will be making use of functions $u$ that are `fixed' outside of a bounded open set $\Omega$. Let $\Omega$ be a bounded open set in $\bbr$. Let $u_0\in \bvG(\bbr)$. We define
    \begin{align}
        \bvG(\Omega;u_0):= \left\{ u\in\bvG(\bbr): u(x)=u_0(x) \text{ a.e. } x\in\bbr\setminus\Omega\right\}.
    \end{align}
    
    We also say that $f\in \textbf{W}^{\alpha,1}(\bbr)$ if $f\in \lone(\bbr)$ and 
    \[\displaystyle [f]_{\textbf{W}^{\alpha,1}(\bbr)}:=\iint_{\bbr\times\bbr} \frac{|f(x)-f(y)|}{|x-y|^{n+\alpha}}\,dy\,dx<\infty.\]
    It is proved in \cite[Theorem 3.4]{Antil2024} that $\bvG(\bbr)=W^{\alpha,1}(\bbr)$ with equality of the suggested norms. In particular, $\bvG(\bbr)$ is a Banach space.
\end{defn}

\subsection{Evolutionary fractional BV spaces} 
\begin{defn} 
Let $\Omega$ be an open subset of $\bbr$. The space $\lwbv$ consists of functions $v: [0,T] \rightarrow \bvr(\Omega)$ such that
\begin{itemize}
    \item[(A)]  We have $\displaystyle v \in \lone(\Omega\times (0,T)),$
    \item[(B)] For every $\Psi \in C_c^1(\bbr,\bbr)$ the maps:
    \[
    [0,T] \ni t \mapsto \int_{\Omega} v(t) \nabla_R^{\alpha} \cdot \Psi\,dx
    \]
    are measurable and
    \item[(C)] We have $\int_0^T \tvr(v(t)) \,dt <+\infty.$ 
\end{itemize}
\end{defn}

Similarly, we define
\begin{defn}
    Let $\Omega$ be an open subset of $\bbr$. The space $\lwbvGz$
consists of functions $v: [0,T] \rightarrow \bvG(\Omega;u_0)$ such that
\begin{itemize}
    \item[(A)]  $v \in \textbf{L}^{\textbf{1}}(\bbr\times (0,T))$,
    \item[(B)] For every $\Phi \in C_c^1(\bbr\times\bbr;\bbR)$ the maps:
    $\displaystyle[0,T] \ni t \mapsto \int_{\bbr} v(t,x)\, (\divG \Phi)(x)\,dx$
    are measurable and
    \item[(C)] The quantity $\displaystyle\int_0^T \tvG(v(t);\bbr) \,dt$ is finite.
\end{itemize}
\end{defn}

We have the following closure properties for the evolutionary fractional BV spaces.

\begin{lem}[Closure in $\lwbv$]\label{lem:closure1}
    Let $\Omega$ be an open subset of $\bbr$. Suppose that $\displaystyle u_n \in \lwbv$ for $n \geq 1$ satisfies:
    \begin{itemize}
        \item[(i)] $\sup_{n \geq 1} \int_0^T \tvr(u_n(t))\,dt < + \infty$ and
        \item[(ii)] $u_n \rightarrow u$ in $L^1(\Omega_T).$
        Then \[u \in \lwbv.\]
    \end{itemize}
\end{lem}
\begin{proof}
    Through a standard argument involving Fubini's theorem, there is a subsequence along which $u_n(t)\to u(t)$ as $n\to\infty$ in $L^1(\Omega)$ a.e. $t\in [0,T]$. Therefore, for any $\Psi\in C^1_c(\bbr,\bbr)$, it holds for a.e. $t\in[0,T]$,
    \begin{align*}
        \lim_{n\to\infty}\int_\Omega u_n(x,t) \divr\cdot\Psi(x)\,dx = \int_\Omega u(x,t) \divr\cdot\Psi(x)\,dx.
    \end{align*} Since the map $t\mapsto \int_\Omega u_n(x,t) \divr\cdot\Psi(x)\,dx$ is measurable for $t\in [0,T]$, the limit $t\mapsto \int_\Omega u(x,t) \divr\cdot\Psi(x)\,dx$ is also measurable. For each $t\in [0,T]$, we apply lower semicontinuity of fractional $\bvr$ norm with respect to $L^1$ convergence (\cite[Proposition 2.6]{Antil2024}) followed by Fatou's lemma to get
    \begin{align*}
        \int_0^T \tvr(u)\,dt \leq \int_0^T \liminf_{n\to\infty} \,\tvr(u_n)\,dt \leq \liminf_{n\to\infty} \,\int_0^T \tvr(u_n)\,dt
    \end{align*} which is finite. This implies, in particular, that $\tvr(u(t))$ is finite a.e. $t\in[0,T]$ whence $u(t)\in \bvr(\Omega)$. Altogether this proves that $u\in \lwbv$.
\end{proof}
\begin{lem}[Closure in $\lwbvGz$]\label{lem:closure2}
    Let $\Omega$ be an open subset of $\bbr$. Suppose that $\displaystyle u_n \in \lwbvGz$ for $n \geq 1$ satisfies:
    \begin{itemize}
        \item[(i)] $\sup_{n \geq 1} \int_0^T \tvG(u_n(t);\bbr)\,dt < + \infty$ and
        \item[(ii)] $u_n \rightarrow u$ in $L^1(\bbr\times(0,T)).$
        Then \[u \in \lwbvGz.\]
    \end{itemize}
\end{lem}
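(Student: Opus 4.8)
The plan is to follow the template of \cref{lem:closure1}, now working over all of $\bbr$ rather than over $\Omega$ and with the additional exterior constraint built into $\bvG(\Omega;u_0)$. First I would invoke Fubini's theorem on $\bbr\times(0,T)$ to pass to a subsequence (not relabelled) along which $u_n(t)\to u(t)$ in $\lone(\bbr)$ for a.e. $t\in[0,T]$, and simultaneously $u_n\to u$ pointwise $\mathcal{L}^N\times\mathcal{L}^1$-a.e.\ on $\bbr\times(0,T)$. The three defining conditions of $\lwbvGz$, together with the exterior constraint, are then checked in turn; condition (A) is immediate from hypothesis (ii), which already places $u\in\lone(\bbr\times(0,T))$.

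For condition (B), fix $\Phi\in C_c^1(\bbr\times\bbr;\bbR)$; the key technical point is that $\divG\Phi\in L^\infty(\bbr)$. Indeed, writing $g(x,y)=\Phi(x,y)-\Phi(y,x)$ one has $g(x,x)=0$, so $|g(x,y)|\leq C|x-y|$ uniformly, whence the integrand $|g(x,y)|\,|x-y|^{-N-\alpha}$ is dominated by $C|x-y|^{1-N-\alpha}$ and is integrable near the diagonal by this first-order cancellation, while the compact support of $\Phi$ controls the contribution away from the diagonal; thus $\divG\Phi$ is bounded. Consequently $\bigl|\int_\bbr (u_n(t)-u(t))\,\divG\Phi\,dx\bigr|\leq \|u_n(t)-u(t)\|_{\lone(\bbr)}\,\|\divG\Phi\|_{L^\infty(\bbr)}\to 0$ for a.e.\ $t$, and since each $t\mapsto\int_\bbr u_n(t)\,\divG\Phi\,dx$ is measurable, the a.e.\ pointwise limit $t\mapsto\int_\bbr u(t)\,\divG\Phi\,dx$ is measurable as well.

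For condition (C), I would use lower semicontinuity of $\tvG(\cdot;\bbr)$ under $\lone(\bbr)$ convergence --- which follows directly from the definition, since $\tvG(\cdot;\bbr)$ is a supremum of functionals $f\mapsto\int_\bbr f\,\divG\Phi\,dx$ that are continuous under $\lone$ convergence by the boundedness of $\divG\Phi$ just established --- applied for a.e.\ $t$ to $u_n(t)\to u(t)$, together with Fatou's lemma, to obtain
\begin{align*}
\int_0^T \tvG(u(t);\bbr)\,dt \leq \int_0^T \liminf_{n\to\infty}\tvG(u_n(t);\bbr)\,dt \leq \liminf_{n\to\infty}\int_0^T \tvG(u_n(t);\bbr)\,dt,
\end{align*}
which is finite by hypothesis (i); in particular $u(t)\in\bvG(\bbr)$ for a.e.\ $t$.

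It remains to verify the exterior constraint that distinguishes this lemma from \cref{lem:closure1}, namely $u(t)\in\bvG(\Omega;u_0)$, i.e.\ $u(x,t)=u_0(x)$ for a.e.\ $x\in\bbr\setminus\Omega$ and a.e.\ $t$. Since $u_n(t)\in\bvG(\Omega;u_0)$ gives $u_n(x,t)=u_0(x)$ off $\Omega$, and $u_n\to u$ pointwise a.e.\ on $\bbr\times(0,T)$ along the chosen subsequence, passing to the limit yields $u(x,t)=u_0(x)$ for a.e.\ $(x,t)$ with $x\in\bbr\setminus\Omega$; by Fubini this holds for a.e.\ $x\in\bbr\setminus\Omega$ for a.e.\ $t$. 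Combining the four points gives $u\in\lwbvGz$. The only genuinely new ingredient beyond \cref{lem:closure1} is this exterior-constraint step, and it is mild; the main technical care needed is the boundedness of $\divG\Phi$, which underlies both the limit passage in (B) and the lower semicontinuity in (C).
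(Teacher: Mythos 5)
Your proof is correct and takes essentially the same route as the paper's, which simply transposes the argument of \cref{lem:closure1} (Fubini subsequence, measurability of the tested maps as a.e.\ pointwise limits, lower semicontinuity plus Fatou) and cites \cite[Lemma 3.8]{Antil2024} for the lower semicontinuity of $\tvG$ under $\lone$ convergence. The only differences are that you prove the boundedness of $\divG\Phi$ and the resulting lower semicontinuity directly from the first-order cancellation on the diagonal instead of citing the reference, and that you explicitly verify the exterior constraint $u(\cdot,t)=u_0$ on $\bbr\setminus\Omega$ --- a step the paper leaves implicit in its ``identical proof'' remark --- both of which are sound.
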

\begin{proof}
    The proof is identical to the proof of \cref{lem:closure1} except, we use \cite[Lemma 3.8]{Antil2024} for lower semicontinuity of BV seminorm in $\bvG(\bbr)$ with respect to $L^1$-norm convergence.
\end{proof}
\subsection{Time mollification}
A difficulty in working with parabolic problems is the absence of time-regularity which prevents the use of the solutions themselves as test functions in obtaining energy inequalities. To overcome this deficit, it is standard to use time-mollification such as the one described below. Let $V$ be a Banach space with $v_0 \in V$. For $v \in L^r(0,T;V)$ with $r \in [1,+\infty]$ we define the time mollification of $v$ as follows: for a small $0<h\leq T$ and any $t \in [0,T]$ 
\begin{align}\label{timemolli}
[v]_h(t) = \exp(-t/h)v_0 + \frac{1}{h}\int_0^t \exp\left(\tfrac{s-t}{h}\right)v(s) \,ds.
\end{align}
We have the following basic properties of the mollification whose proofs may be found in \cite[Appendix B]{Bogelein2013}.

\begin{thm}\label{timemollithm1}
    Suppose $V$ is a separable Banach space and $v_0\in V$. If $v\in L^r(0,T;V)$ for some $r\geq 1$, then the mollification $[v]_h$ defined by \cref{timemolli} satisfies $[v]_h\in C^\infty([0,T];V)$ and for any $t_0\in (0,T]$ there holds for $1\leq r<\infty$
    \begin{align*}
        \|[v]_h\|_{L^r(0,t_0;V)}\leq\|v\|_{L^r(0,t_0;V)}+\left[ \frac{h}{r}\left(1-\exp\left(-\frac{t_0r}{h}\right)\right) \right]^{\frac1r} \|v_0\|_V,
    \end{align*} and for $r=\infty$, we have 
    \begin{align*}
        \|[v]_h\|_{L^\infty(0,t_0;V)}\leq\|v\|_{L^\infty(0,t_0;V)}+ \|v_0\|_V.
    \end{align*} Moreover, we have $[v]_h(0)\to v$ in $L^r(0,T;V)$ as $h\to 0$. If $v\in C^0([0,T];V)$, then $[v]_h\in C^0([0,T];V)$, $[v]_h(0)=v_0$ and $[v]_h\to v$ in $C^0([0,T];V)$ as $h\to0$.
\end{thm}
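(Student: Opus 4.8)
The plan is to exploit the fact that $[v]_h$ is nothing but the solution of a linear first‑order ODE in $V$. Writing $\exp\!\big(\tfrac{s-t}{h}\big)=\exp(-t/h)\exp(s/h)$ and setting $g(t)=\int_0^t\exp(s/h)\,v(s)\,ds$, one gets $[v]_h(t)=\exp(-t/h)\big(v_0+\tfrac1h g(t)\big)$. Since $g$ is absolutely continuous with $g'(t)=\exp(t/h)\,v(t)$ for a.e.\ $t$, a direct differentiation shows that $[v]_h$ is absolutely continuous and satisfies
\[
\partial_t[v]_h=\tfrac1h\big(v-[v]_h\big),\qquad [v]_h(0)=v_0,
\]
for a.e.\ $t\in(0,T)$. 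I would record this identity first, as it is the backbone of the whole statement: the value $[v]_h(0)=v_0$ is immediate from the integral formula, and the ODE shows $\partial_t[v]_h\in L^r(0,T;V)$ as soon as $v\in L^r$, so that $[v]_h\in W^{1,r}(0,T;V)\hookrightarrow C^0([0,T];V)$, with the same relation gaining one further time‑derivative for each additional derivative of $v$ (hence the regularity of $[v]_h$ matches that of its datum).

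Next I would prove the quantitative $L^r$ bound, which is the technical heart. The triangle inequality in $V$ applied to the integral formula gives
\[
\norm{[v]_h(t)}_V\le \exp(-t/h)\norm{v_0}_V+\tfrac1h\int_0^t\exp\!\big(\tfrac{s-t}{h}\big)\norm{v(s)}_V\,ds .
\]
The first term is handled exactly: its $L^r(0,t_0)$‑norm equals $\big(\int_0^{t_0}\exp(-rt/h)\,dt\big)^{1/r}\norm{v_0}_V=\big[\tfrac{h}{r}\big(1-\exp(-t_0 r/h)\big)\big]^{1/r}\norm{v_0}_V$, producing precisely the advertised constant. For the averaging term the key observation is that the one‑sided kernel $\tfrac1h\exp\!\big(\tfrac{s-t}{h}\big)\mathbbm{1}_{\{s<t\}}$ has total mass $1-\exp(-t/h)\le 1$, so it defines a sub‑probability measure; applying Jensen's inequality against its normalisation (using $r\ge 1$ and mass $\le 1$) bounds the $r$‑th power of the average by the average of the $r$‑th powers, after which Fubini and the elementary estimate $\tfrac1h\int_s^{t_0}\exp\!\big(\tfrac{s-t}{h}\big)\,dt\le 1$ show that its $L^r(0,t_0;V)$‑norm is at most $\norm{v}_{L^r(0,t_0;V)}$. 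Minkowski's inequality then combines the two pieces into the claimed bound; the case $r=\infty$ is even simpler, since the weights $\exp(-t/h)$ and $1-\exp(-t/h)$ each lie in $[0,1]$. I expect this sub‑Markovian mass computation, together with the extraction of the sharp constant, to be the main obstacle, everything else being routine.

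Finally, for the convergence $[v]_h\to v$ in $L^r(0,T;V)$ I would combine the uniform‑in‑$h$ operator bound just proved (note that the $v_0$‑coefficient tends to $0$ as $h\to0$) with a density argument. For $w\in C^0([0,T];V)$ one writes, using that the weights $\exp(-t/h)$ and $\tfrac1h\int_0^t\exp\!\big(\tfrac{s-t}{h}\big)\,ds$ sum to $1$,
\[
[w]_h(t)-w(t)=\exp(-t/h)\big(v_0-w(t)\big)+\tfrac1h\int_0^t\exp\!\big(\tfrac{s-t}{h}\big)\big(w(s)-w(t)\big)\,ds,
\]
and the concentration of the kernel near $s=t$ drives the right‑hand side to $0$ uniformly on compact subsets of $(0,T]$, and uniformly up to $t=0$ when $v_0=w(0)$, which gives the $C^0$ statement. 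Since such $w$ are dense in $L^r(0,T;V)$ and the difference $[v]_h-[w]_h$ is the mollification of $v-w$ with zero initial datum — hence $\norm{[v]_h-[w]_h}_{L^r(0,T;V)}\le\norm{v-w}_{L^r(0,T;V)}$ uniformly in $h$ — an $\varepsilon/3$ argument upgrades convergence on the dense set to $L^r$‑convergence for every $v\in L^r(0,T;V)$, completing the list of claimed properties.
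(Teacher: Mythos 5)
The paper itself gives no proof of this theorem — it defers to the cited reference (Appendix B of B\"ogelein--Duzaar--Marcellini) — and your argument is essentially the standard one found there: the ODE identity $\partial_t[v]_h=\tfrac1h(v-[v]_h)$ with $[v]_h(0)=v_0$, the exact computation $\big(\int_0^{t_0}\exp(-rt/h)\,dt\big)^{1/r}=\big[\tfrac{h}{r}(1-\exp(-t_0r/h))\big]^{1/r}$ for the datum term, Jensen's inequality against the sub-probability kernel $\tfrac1h\exp\big(\tfrac{s-t}{h}\big)\mathbbm{1}_{\{s<t\}}$ combined with Fubini for the averaging term, and an $\varepsilon/3$ density argument (using that $[v]_h-[w]_h$ is the mollification of $v-w$ with zero datum) for the convergence. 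All of these steps are correct, including the mass bounds $1-\exp(-t/h)\le 1$ and $\tfrac1h\int_s^{t_0}\exp\big(\tfrac{s-t}{h}\big)\,dt\le 1$, and the uniform-convergence argument for continuous data.

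One caveat, which concerns the statement rather than your proof: as transcribed, the theorem asserts $[v]_h\in C^\infty([0,T];V)$, which is false for general $v\in L^r(0,T;V)$ — by your own identity $\partial_t[v]_h=\tfrac1h(v-[v]_h)$, the mollification can be no smoother than $W^{1,r}$ in time (take $V=\mathbb{R}$ and $v$ a step function: $\partial_t[v]_h$ then jumps, so $[v]_h\notin C^1$). Your bootstrap observation — $[v]_h\in W^{1,r}(0,T;V)\hookrightarrow C^0([0,T];V)$, with one further derivative gained for each derivative of $v$ — is the correct version of this claim, and $C^\infty$ regularity holds only for correspondingly smooth $v$. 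You also rightly note that the final $C^0([0,T];V)$-convergence needs the compatibility $v_0=v(0)$, which the statement leaves implicit even though it is forced by $[v]_h(0)=v_0$; making both of these corrections explicit would be worthwhile, but neither is a gap in your argument.
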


\begin{thm}\label{timemollithem2}
    Let $V$ be a separable Banach space and $r\geq 1$ Assume that $v\in L^r(0,T;V)$ with $\de_t v\in L^r(0,T;V)$. Then the function $[v]_h$ which is the time-mollification of $v$ as defined in \cref{timemolli} satisfies
    \begin{align*}
        \de_t[v]_h(t)=\frac1h\int_0^texp\left(\frac{s-t}{h}\right)\de_sv(s)\,ds,
    \end{align*}  the identity
    \begin{align}\label{ODEmolli}
        \de_t[u]_h=-\frac1h([u]_h-u).
    \end{align}
    with the bound
    \begin{align*}
        \|\de_t[v]_h\|_{L^r(0,T;V)}\leq \|v\|_{L^r(0,T;V)}.
    \end{align*}
\end{thm}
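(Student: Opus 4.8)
The plan is to establish the three assertions in turn, each by differentiating the explicit formula \eqref{timemolli} and then recognising $\partial_t[v]_h$ as the mollification of $\partial_t v$ itself. By \cref{timemollithm1} we already know $[v]_h\in C^\infty([0,T];V)$, so pointwise differentiation in $t$ is legitimate; the only care needed is that every manipulation is carried out for $V$-valued Bochner integrals, where the fundamental theorem of calculus and integration by parts remain valid because $\partial_t v\in L^r(0,T;V)$ forces $v$ to coincide a.e.\ with an absolutely continuous representative, in particular with $v(0)=v_0$.

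First I would prove the identity \eqref{ODEmolli} by direct differentiation. Rewriting \eqref{timemolli} as
\[
[v]_h(t)=\exp(-t/h)v_0+\frac{1}{h}\exp(-t/h)\int_0^t\exp(s/h)\,v(s)\,ds,
\]
I differentiate using the product rule together with the fundamental theorem of calculus. Differentiating the exponential prefactors contributes a factor $-1/h$ that reassembles $-\frac1h[v]_h(t)$, while the fundamental theorem of calculus applied to the integral contributes the boundary term $\frac1h v(t)$. Collecting these terms yields $\partial_t[v]_h=-\frac1h([v]_h-v)$, which is exactly \eqref{ODEmolli}.

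Next I would derive the first representation by integrating by parts in the variable $s$ inside $\frac1h\int_0^t\exp\left(\frac{s-t}{h}\right)\partial_s v(s)\,ds$. The boundary contributions produce $\frac1h v(t)-\frac1h\exp(-t/h)v(0)$ and the remaining integral is $-\frac{1}{h^2}\int_0^t\exp\left(\frac{s-t}{h}\right)v(s)\,ds$. Using $v(0)=v_0$ and comparing with the expression found in the previous step, the right-hand side is seen to equal $\partial_t[v]_h(t)$, which establishes the claimed formula.

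Finally, the bound follows at once from this representation. The formula exhibits $\partial_t[v]_h$ as precisely the time-mollification \eqref{timemolli} of $\partial_t v\in L^r(0,T;V)$ taken with zero initial datum. Applying the $L^r$-estimate of \cref{timemollithm1} to $\partial_t v$ with initial value $0$ makes the correction term vanish, giving $\|\partial_t[v]_h\|_{L^r(0,T;V)}\leq\|\partial_t v\|_{L^r(0,T;V)}$. The only genuine obstacle anywhere in the argument is the justification of the Bochner-integral calculus—the fundamental theorem of calculus and integration by parts for $V$-valued integrals—which is precisely where the hypothesis $\partial_t v\in L^r(0,T;V)$, and the attendant absolute continuity of $v$, is used.
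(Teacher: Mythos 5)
The paper offers no proof of this theorem---it defers to \cite[Appendix B]{Bogelein2013}---and your computations are exactly the standard ones from that reference: differentiate \eqref{timemolli} directly to get \eqref{ODEmolli}, integrate by parts in $s$ to match the claimed representation against it, and read the representation as the mollification of $\de_t v$ with zero initial datum to get the $L^r$ bound from \cref{timemollithm1}. All three computations are correct. There is, however, one genuinely flawed justification.

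Your assertion that $\de_t v\in L^r(0,T;V)$ ``forces'' $v(0)=v_0$ is false. Absolute continuity of (the good representative of) $v$ makes the trace $v(0)$ well defined, but in \eqref{timemolli} the datum $v_0\in V$ is an arbitrary, freely chosen element with no a priori relation to $v(0)$; the equality $v(0)=v_0$ is a compatibility \emph{hypothesis}, implicitly in force in the theorem and explicit in the cited reference, not a consequence of the assumptions. The point is not cosmetic: combining \eqref{ODEmolli} with your integration by parts gives, in general,
\begin{align*}
\de_t[v]_h(t)=\frac1h\int_0^t \exp\left(\frac{s-t}{h}\right)\de_sv(s)\,ds+\frac1h\exp\left(-\frac{t}{h}\right)\bigl(v(0)-v_0\bigr),
\end{align*}
so the claimed representation is false whenever $v_0\neq v(0)$, and the extra term has $L^r(0,T;V)$-norm of order $h^{\frac1r-1}\|v(0)-v_0\|_V$, which for $r>1$ blows up as $h\to0$, destroying the uniform bound as well. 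The fix is simply to state $v_0=v(0)$ as a hypothesis (as needed everywhere this theorem is applied in the paper, e.g.\ with $v_0=u(t_1)$ in the time-regularity proof, where it does hold).

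Two smaller remarks. The bound you actually derive, $\|\de_t[v]_h\|_{L^r(0,T;V)}\leq\|\de_t v\|_{L^r(0,T;V)}$, is the correct statement; the right-hand side $\|v\|_{L^r(0,T;V)}$ printed in the theorem is a typo (already false for $V=\bbR$, $v(t)=\sin(\omega t)$, $v_0=0$, $\omega$ large, $h$ small). And you should not lean on the $C^\infty([0,T];V)$ claim of \cref{timemollithm1} to license pointwise differentiation---that claim is itself overstated for $v$ merely in $L^r$; the honest justification is the one you name anyway, namely the fundamental theorem of calculus for Bochner integrals, which under $\de_t v\in L^r(0,T;V)$ holds at every $t$ for the continuous representative.
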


Below, we state two results about the convergence of the denoising and deblurring functionals of time-mollified solutions. These will be required in the passage to limit. The proof of \cref{convdenoiselem} can be found in \cite[Lemma 2.3]{Bogelein2014} with some obvious modifications whereas the proof of \cref{convdeblurlem} can be found in \cite[Lemma 2.5]{Boegelein2015}.

\begin{lem}[Convergence of denoising functionals]\label{convdenoiselem}
    Let $T>0$. Assume that $S:\Omega\times\bbR\to\bbR$ is a Carath\'eodory integrand as in \cref{convexdenoise} and that $v\in L^1(\Omega_T)$ with $S(\cdot,v)\in L^1(\Omega_T)$, $v_0\in L^1(\Omega)$ and $S(\cdot,v_0)\in L^1(\Omega)$. Then $S(\cdot,[v]_h)\in L^1(\Omega_T)$. Finally,
    \begin{align*}
        \lim_{h\to0}\int_0^T\int_\Omega S(x,[v]_h(x,t))\,dx\,dt = \int_0^T\int_\Omega S(x,v(x,t))\,dx\,dt
    \end{align*}
\end{lem}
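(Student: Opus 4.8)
The plan is to exploit that the time mollification $[v]_h(t)$ from \cref{timemolli} is, pointwise in $x$, a convex combination of the datum $v_0(x)$ and the past values $\{v(x,s)\}_{0\le s\le t}$, and then to combine Jensen's inequality with a generalized (variable-majorant) dominated convergence theorem. First I would record that the mollification weights form a probability measure: since
\[
\exp(-t/h)+\frac1h\int_0^t\exp\!\left(\tfrac{s-t}{h}\right)\,ds=\exp(-t/h)+\bigl(1-\exp(-t/h)\bigr)=1,
\]
the value $[v]_h(x,t)$ is the barycentre of $v_0(x)$ and $s\mapsto v(x,s)$ against the probability measure on $\{0\}\cup(0,t]$ with atom $\exp(-t/h)$ at the initial point and density $\tfrac1h\exp((s-t)/h)$ on $(0,t]$.

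Writing $g(x,s):=S(x,v(x,s))$ and $g_0(x):=S(x,v_0(x))$, the convexity of $u\mapsto S(x,u)$ from \cref{convexdenoise} together with Jensen's inequality then yields the pointwise bound
\[
S\bigl(x,[v]_h(x,t)\bigr)\le \exp(-t/h)\,g_0(x)+\frac1h\int_0^t\exp\!\left(\tfrac{s-t}{h}\right)g(x,s)\,ds=[g]_h(x,t),
\]
that is, $S(\cdot,[v]_h)\le[g]_h$, where $[g]_h$ denotes the mollification of $g$ with datum $g_0$. Since the integrand is nonnegative as in \cref{convexdenoise}, this gives $0\le S(\cdot,[v]_h)\le[g]_h$. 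As $g\in L^1(\Omega_T)$ and $g_0\in L^1(\Omega)$ by hypothesis, \cref{timemollithm1} guarantees $[g]_h\in L^1(\Omega_T)$, whence $S(\cdot,[v]_h)\in L^1(\Omega_T)$, which is the first assertion.

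For the convergence of the integrals I would argue as follows. Applying \cref{timemollithm1} with $V=L^1(\Omega)$ and $r=1$ gives $[v]_h\to v$ and $[g]_h\to g$ in $L^1(\Omega_T)$ as $h\to0$; in particular $\int_{\Omega_T}[g]_h\to\int_{\Omega_T}g$. Passing to a subsequence we may assume $[v]_h\to v$ and $[g]_h\to g$ a.e.\ in $\Omega_T$, and then the continuity of $u\mapsto S(x,u)$ (the Carath\'eodory property) yields $S(x,[v]_h(x,t))\to S(x,v(x,t))$ a.e. At this point the family $\{[g]_h\}$ serves as a sequence of majorants that itself converges a.e.\ to $g$ with convergent integrals, which is exactly the hypothesis of the generalized dominated convergence theorem; it delivers $\int_{\Omega_T}S(\cdot,[v]_h)\to\int_{\Omega_T}S(\cdot,v)$ along the subsequence. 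Since the limit is independent of the subsequence chosen, the usual subsequence principle promotes this to the full limit $h\to0$.

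The main obstacle is precisely that no single $h$-independent $L^1$ majorant is available: Jensen produces only the $h$-dependent bound $[g]_h$, so the classical dominated convergence theorem does not apply directly. The resolution is the $L^1$-convergence $[g]_h\to g$ furnished by \cref{timemollithm1}, which feeds the generalized dominated convergence theorem. A minor point to handle carefully is that the a.e.\ convergence of the mollifications is only available along subsequences, so the conclusion is first obtained along subsequences and then upgraded to the full limit by the standard argument that every subsequence admits a further subsequence converging to the same value.
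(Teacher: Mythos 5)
Your proof is correct and follows essentially the same route as the paper's source for this lemma (the paper defers to \cite[Lemma 2.3]{Bogelein2014}, whose argument is precisely the Jensen/convexity bound $S(\cdot,[v]_h)\leq [S(\cdot,v)]_h$ combined with the $L^1$-convergence of the mollified majorant from \cref{timemollithm1}), mirroring the bound (ii) and the Fatou-type sandwich used in \cref{convtvrfunc}. One small simplification you could note: since here the a.e.\ limit of $S(\cdot,[v]_h)$ coincides with the limit $g=S(\cdot,v)$ of the majorants, you do not need the full generalized dominated convergence theorem — Fatou gives $\liminf_{h\to0}\int S(\cdot,[v]_h)\geq\int g$, while the pointwise bound gives $\limsup_{h\to0}\int S(\cdot,[v]_h)\leq\lim_{h\to0}\int[g]_h=\int g$ directly.
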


\begin{lem}[Convergence of deblurring functionals]\label{convdeblurlem}
    Let $T>0$ and let $K$ be as in \cref{deblurfunc}. Furthermore let $v_0\in L^1(\Omega)$, $v \in L^1(\Omega_T)$, and $u_0\in L^2(\Omega)$ and define $[v]_h$ as in \cref{timemolli}. Then $K[[v]_h(t)]-u_0]\in L^2(\Omega_T)$, and
    \begin{align*}
        |K[[v]_h(t)]-u_0|^2\leq[|K[v(\cdot)]-u_0|^2]_h(t),
    \end{align*} where $[|K[v](\cdot)-u_0|^2]_h(t)$ is defined via \cref{timemolli}
 with $|K[v_0]-u_0|^2$ in place of $v_0$. Finally, we have
    \begin{align*}
        \lim_{h\to0}\int_0^T\int_\Omega |K[[v]_h(t)]-u_0|^2\,dx\,dt = \int_0^T\int_\Omega |K[v(t)]-u_0|^2\,dx\,dt
    \end{align*}
\end{lem}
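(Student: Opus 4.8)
The plan is to exploit the fact that the time mollification in \cref{timemolli} is a \emph{convex combination}: the weight $\exp(-t/h)$ attached to the initial datum and the weight $\tfrac1h\exp\!\big(\tfrac{s-t}{h}\big)$ attached to $v(s)$ integrate to $1$ over $s\in[0,t]$. First I would record this normalisation, since it is exactly what lets $u_0$ be reabsorbed. Using linearity and boundedness of $K$, one has $K[[v]_h(t)]=\exp(-t/h)K[v_0]+\tfrac1h\int_0^t\exp\!\big(\tfrac{s-t}{h}\big)K[v(s)]\,ds$, and writing $u_0$ itself against these same unit-mass weights yields the clean identity
\[
K[[v]_h(t)]-u_0=\exp(-t/h)\big(K[v_0]-u_0\big)+\frac1h\int_0^t\exp\!\Big(\tfrac{s-t}{h}\Big)\big(K[v(s)]-u_0\big)\,ds,
\]
that is, $K[[v]_h(t)]-u_0=[\,K[v(\cdot)]-u_0\,]_h(t)$, the mollification applied to $s\mapsto K[v(s)]-u_0$ with initial value $K[v_0]-u_0$.

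The pointwise inequality is then immediate from convexity. Applying Jensen's inequality to the convex map $\xi\mapsto|\xi|^2$ against the unit-mass measure above gives, pointwise in $x$ and $t$,
\[
|K[[v]_h(t)]-u_0|^2\le\exp(-t/h)|K[v_0]-u_0|^2+\frac1h\int_0^t\exp\!\Big(\tfrac{s-t}{h}\Big)|K[v(s)]-u_0|^2\,ds,
\]
whose right-hand side is precisely $[\,|K[v(\cdot)]-u_0|^2\,]_h(t)$, the asserted bound. For the $L^2(\Omega_T)$ membership I would not even need this: since $K:L^1(\Omega)\to L^2(\Omega)$ is bounded and, by the elementary $L^1$ estimate for the mollification, $[v]_h\in L^\infty(0,T;L^1(\Omega))$ with norm controlled (for fixed $h$) by $\|v_0\|_{L^1(\Omega)}+\tfrac1h\|v\|_{L^1(\Omega_T)}$, one gets $K[[v]_h]\in L^\infty(0,T;L^2(\Omega))\subset L^2(\Omega_T)$, and subtracting the time-independent $u_0\in L^2(\Omega)$ keeps us in $L^2(\Omega_T)$.

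For the limit I would prove the two inequalities separately. Integrating the Jensen bound over $\Omega_T$ and invoking the $L^1$-convergence $[g]_h\to g$ (\cref{timemollithm1} with $V=L^1(\Omega)$, $r=1$, applied to $g:=|K[v(\cdot)]-u_0|^2$ and $g_0:=|K[v_0]-u_0|^2$) gives $\limsup_{h\to0}\int_{\Omega_T}|K[[v]_h]-u_0|^2\,dx\,dt\le\int_{\Omega_T}|K[v]-u_0|^2\,dx\,dt$. For the reverse inequality I would use that $[v]_h\to v$ in $L^1(\Omega_T)$, pass to a subsequence along which $[v]_h(t)\to v(t)$ in $L^1(\Omega)$ for a.e.\ $t$, apply continuity of $K$ to deduce $K[[v]_h(t)]-u_0\to K[v(t)]-u_0$ in $L^2(\Omega)$ for a.e.\ $t$, and then use Fatou's lemma to obtain $\int_{\Omega_T}|K[v]-u_0|^2\,dx\,dt\le\liminf_{h\to0}\int_{\Omega_T}|K[[v]_h]-u_0|^2\,dx\,dt$ (a subsequence-of-every-subsequence argument then upgrades this to the full family). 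Combining the two yields the claimed convergence. The main point to handle with care is this limit passage rather than the inequality itself: one must marry the upper bound coming from the convexity/mollification estimate with the lower bound coming from a.e.\ convergence and Fatou, while keeping track of the integrability of $g$ (it lies in $L^1(\Omega)$ at each time by boundedness of $K$, and the identity is read in the extended sense should $g\notin L^1(\Omega_T)$, in which case both sides are $+\infty$ by Fatou).
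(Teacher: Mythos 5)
Your proposal is correct and follows essentially the same route as the paper's own proof (which is deferred to \cite[Lemma 2.5]{Boegelein2015}): exploit the unit-mass convex-combination structure of the mollification \cref{timemolli} together with linearity of $K$ to write $K[[v]_h(t)]-u_0=[K[v(\cdot)]-u_0]_h(t)$, apply Jensen for the pointwise bound, and combine the resulting upper bound (via \cref{timemollithm1}) with a.e.\ convergence, continuity of $K$, and Fatou for the limit. Your attention to the case $|K[v]-u_0|^2\notin L^1(\Omega_T)$, where both sides are read as $+\infty$, is a careful touch consistent with the intended scope of the lemma.
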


Similarly, the \textbf{TV} seminorms of time-mollified solutions also converge as the mollification parameter $h$ goes to $0$. These results will be required later.

\begin{lem}\label{convtvrfunc}
    Let $T>0$, $v_0 \in \bvr(\Omega)$ and $v \in L_w^1\left(0,T;\bvr(\Omega)\right)$. Then 
    \begin{itemize}
        \item[(i)] \[[v]_h \in L_w^1\left(0,T;\bvr(\Omega)\right),\]
        \item[(ii)] \[
        \tvr([[v]_h(t)) \leq [\tvr(v)]_h(t) \quad \mbox{ for every } \quad 0<t<T, \quad \mbox{ and }
        \]
        \item[(iii)] \[
        \lim_{h \searrow 0}\int_0^T \tvr([v]_h(t))\,dt = \int_0^T \tvr(v)(t) \,dt. 
        \]
    \end{itemize}
\end{lem}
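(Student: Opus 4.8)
\section*{Proof proposal}

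The plan is to handle the three parts in the order (ii), (i), (iii), since the pointwise estimate (ii) is the engine that drives the rest. The key structural observation is that the exponential mollifier in \cref{timemolli} is a genuine convex average: a direct computation gives $\exp(-t/h) + \frac1h\int_0^t \exp\!\big(\tfrac{s-t}{h}\big)\,ds = 1$, so $[v]_h(t)$ is a probability average of the datum $v_0$ and the trajectory $\{v(s)\}_{s\in[0,t]}$. Since $\tvr$ is defined as a supremum of the linear functionals $f\mapsto\int_\Omega f\,\divr\cdot\Psi\,dx$ over admissible $\Psi$, it is convex and positively one-homogeneous, hence sublinear; this is precisely what will convert the convex average into a convex average of total variations.

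To make (ii) precise I would fix an admissible $\Psi\in C_c^1(\bbr;\bbr)$ with $|\Psi|_\infty\le1$, insert the mollification formula for $[v]_h(t)$, and use Fubini's theorem to exchange $\int_\Omega$ with $\frac1h\int_0^t$. This exchange is legitimate because the pairing $s\mapsto\int_\Omega v(s)\,\divr\cdot\Psi\,dx$ is measurable (property (B) of $\lwbv$) and dominated by $\tvr(v(s))$, which lies in $L^1(0,T)$ by property (C). Bounding each spatial pairing by the corresponding total variation (and $\int_\Omega v_0\,\divr\cdot\Psi\,dx\le\tvr(v_0)$, finite since $v_0\in\bvr(\Omega)$) and using nonnegativity of the exponential weights yields
\[
\int_\Omega [v]_h(t)\,\divr\cdot\Psi\,dx \le \exp(-t/h)\,\tvr(v_0) + \frac1h\int_0^t \exp\!\Big(\tfrac{s-t}{h}\Big)\,\tvr(v(s))\,ds = [\tvr(v)]_h(t).
\]
Taking the supremum over $\Psi$ gives exactly (ii). Part (i) then follows by checking the three defining properties of $\lwbv$: membership in $L^1(\Omega_T)$ (property (A)) comes from \cref{timemollithm1} with $V=L^1(\Omega)$; measurability of the dual pairings (property (B)) holds because $[v]_h$ is continuous in $t$ with values in $L^1(\Omega)$ and pairing against the fixed $L^\infty$ function $\divr\cdot\Psi$ is continuous; and finiteness of $\int_0^T\tvr([v]_h(t))\,dt$ (property (C)) is immediate from (ii) together with the scalar $L^1$-mollification bound of \cref{timemollithm1} applied to $t\mapsto\tvr(v(t))$.

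For (iii) I would combine two one-sided estimates. Integrating (ii) in time and invoking the convergence $[\tvr(v)]_h\to\tvr(v)$ in $L^1(0,T)$ (the scalar case of \cref{timemollithm1}) gives $\limsup_{h\searrow0}\int_0^T\tvr([v]_h(t))\,dt\le\int_0^T\tvr(v(t))\,dt$. For the matching lower bound I would use $[v]_h\to v$ in $L^1(\Omega_T)$: along any sequence $h_k\searrow0$ realising the $\liminf$, extract a further subsequence with $[v]_{h_k}(t)\to v(t)$ in $L^1(\Omega)$ for a.e.\ $t$, apply the $L^1$-lower semicontinuity of $\tvr$ (\cite[Proposition 2.6]{Antil2024}) pointwise in $t$, and then Fatou's lemma to obtain $\int_0^T\tvr(v(t))\,dt\le\liminf_{h\searrow0}\int_0^T\tvr([v]_h(t))\,dt$. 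Squeezing the two bounds forces the limit to exist and equal the target.

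I anticipate that the main obstacle is the rigorous justification of the Fubini interchange in (ii), together with verifying that the measurability and integrability provided by properties (B) and (C) of $\lwbv$ are genuinely strong enough to pull the dual pairing through the time average. Once the convex-average structure of the mollifier is recognised, everything else reduces to a sublinearity estimate followed by a standard $\limsup$/$\liminf$ squeeze.
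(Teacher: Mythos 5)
Your proposal is correct and follows essentially the same route as the paper: the dual-pairing formula for $[v]_h$ via Fubini, the supremum over admissible $\Psi$ giving the pointwise bound (ii), \cref{timemollithm1} applied to $t\mapsto\tvr(v(t))$ for the integrability in (i), and lower semicontinuity of $\tvr$ plus Fatou for (iii). The only cosmetic differences are that the paper obtains the $\limsup$ side of (iii) from the explicit bound $\int_0^T\tvr([v]_h)\,dt\leq h\,\tvr(v_0)+\int_0^T\tvr(v)\,dt$ rather than from $L^1(0,T)$-convergence of $[\tvr(v)]_h$, and your subsequence extraction in the $\liminf$ step is, if anything, slightly more careful than the paper's.
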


\begin{proof}
    An application of \cref{timemollithm1} ensures that $[v]_h\to v$ in $L^1(0,T;L^1(\Omega))$ as $h\to0$. As a result, along a subsequence $[v]_h(t)\to v(t)$ for almost every $t\in[0,T]$. By definition, for $\Psi\in C^1_c(\bbr,\bbr)$, we have
    \begin{align*}
        \int_\Omega [v]_h(x,t)\divr\cdot \Psi(x)\,dx= \exp\left(-\frac{t}{h}\right) \int_\Omega v_0 \divr\cdot \Psi\,dx + \frac1h\int_0^t \exp\left(\frac{s-t}{h}\right)\int_\Omega v(s)\divr\cdot\Psi\,dx\,ds
    \end{align*} through an application of Fubini's theorem. In particular, this implies that the map
    \begin{align*}
        (0,T)\ni t\mapsto \int_\Omega [v]_h(x,t)\divr\cdot \Psi(x)\,dx
    \end{align*} is measurable. Also, once we take the supremum over all $\Psi\in C^1_c(\bbr,\bbr)$ with $\|\Psi\|_\infty\leq 1$, we obtain the bound
    \begin{align*}
        \tvr([v]_h(t))\leq \exp\left(-\frac{t}{h}\right) \tvr(v_0) + \frac1h\int_0^t \exp\left(\frac{s-t}{h}\right)\tvr(v(s))\,ds=[\tvr(v(t)]_h\leq\infty
    \end{align*} which proves that $[v]_h(t)\in \bvr(\Omega)$ for almost every $t$. 
    Now applying \cref{timemollithm1} to the function $t\mapsto \tvr(v)$ (this map belongs to $L^1(0,T;\bbR)$ since $v\in \lwbv$), we obtain the bound
    \begin{align}\label{bvboundvh}
        \int_0^T \tvr([v]_h(t))\,dt \leq h\tvr(v_0) + \int_0^T \tvr(v(t))\,dt.
    \end{align} This proves that $[v]_h\in \lwbv$. Finally, by the lower semicontinuity of total variation $\tvr$ as in \cite[Proposition 2.6]{Antil2024}, Fatou's lemma and \cref{bvboundvh}, we get
    \begin{align*}
        \int_0^T \tvr(v(t))\,dt \leq \int_0^T \liminf_{h\to0} \tvr([v]_h(t))\,dt \leq \liminf_{h\to 0} \int_0^T \tvr([v]_h(t))\,dt \stackrel{\cref{bvboundvh}}{\leq} \int_0^T \tvr(v(t))\,dt.
    \end{align*} This completes the proof of the lemma.
\end{proof}

\begin{lem}\label{convtvGfunc}
    Let $T>0$, $v_0 \in \bvG(\bbr)$ and $v \in \lwbvGzvz$. Then 
    \begin{itemize}
        \item[(i)] \[[v]_h \in \lwbvGzvz,\]
        \item[(ii)] \[
        \tvG([[v]_h(t)) \leq [\tvG(v)]_h(t) \quad \mbox{ for every } \quad 0<t<T, \quad \mbox{ and }
        \]
        \item[(iii)] \[
        \lim_{h \searrow 0}\int_0^T \tvG([v]_h(t))\,dt = \int_0^T \tvG(v)(t) \,dt. 
                \]
   \end{itemize}
\end{lem}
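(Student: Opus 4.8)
The plan is to follow the proof of \cref{convtvrfunc} almost verbatim, substituting the Gagliardo divergence $\divG$ for the Riesz divergence and the scalar test fields $\Phi \in C^1_c(\bbr\times\bbr;\bbR)$ for the vector fields $\Psi$, and replacing the lower semicontinuity input \cite[Proposition 2.6]{Antil2024} by its Gagliardo counterpart \cite[Lemma 3.8]{Antil2024}. First I would invoke \cref{timemollithm1} to conclude that $[v]_h \to v$ in $L^1(0,T;L^1(\bbr))$ as $h\searrow 0$, so that along a subsequence $[v]_h(t)\to v(t)$ in $L^1(\bbr)$ for a.e.\ $t\in[0,T]$. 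Then, for a fixed admissible $\Phi$, expanding the mollification \cref{timemolli} and applying Fubini's theorem gives the identity
\[
\int_\bbr [v]_h(x,t)\,\divG\Phi(x)\,dx = \exp\!\left(-\tfrac{t}{h}\right)\!\int_\bbr v_0\,\divG\Phi\,dx + \frac1h\int_0^t \exp\!\left(\tfrac{s-t}{h}\right)\!\int_\bbr v(s)\,\divG\Phi\,dx\,ds,
\]
from which measurability of the map $t\mapsto \int_\bbr [v]_h(x,t)\divG\Phi(x)\,dx$ follows, yielding condition (B). Taking the supremum over all such $\Phi$ with $\|\Phi\|_\infty\leq 1$ and using that the mollification is a positive averaging operator produces
\[
\tvG([v]_h(t);\bbr) \leq \exp\!\left(-\tfrac{t}{h}\right)\tvG(v_0;\bbr) + \frac1h\int_0^t \exp\!\left(\tfrac{s-t}{h}\right)\tvG(v(s);\bbr)\,ds = [\tvG(v)]_h(t),
\]
which is exactly (ii) and simultaneously shows $[v]_h(t)\in\bvG(\bbr)$ for a.e.\ $t$.

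The one genuinely new step relative to the Riesz case is verifying that $[v]_h$ honours the exterior constraint, i.e.\ that $[v]_h(t)\in\bvG(\Omega;v_0)$ and not merely $\bvG(\bbr)$. This is where the ``fixed-in-time'' structure enters: for $x\in\bbr\setminus\Omega$ we have $v(x,s)=v_0(x)$ for a.e.\ $s$, so plugging into \cref{timemolli} and using the normalisation of weights $\exp(-t/h) + \frac1h\int_0^t \exp(\tfrac{s-t}{h})\,ds = 1$ gives $[v]_h(x,t)=v_0(x)$ for a.e.\ $x\notin\Omega$. I expect this weight-normalisation computation to be the only point requiring any care, though it is elementary. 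With the exterior constraint confirmed, applying \cref{timemollithm1} to the scalar map $t\mapsto\tvG(v(t);\bbr)\in L^1(0,T)$ yields the integrated bound $\int_0^T\tvG([v]_h(t))\,dt \leq h\,\tvG(v_0) + \int_0^T\tvG(v(t))\,dt$, establishing condition (C) and hence (i).

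Finally, for (iii) I would combine lower semicontinuity of $\tvG$ under $L^1$ convergence \cite[Lemma 3.8]{Antil2024} with Fatou's lemma and the bound just obtained to run the squeeze
\[
\int_0^T \tvG(v(t))\,dt \leq \int_0^T \liminf_{h\to0}\tvG([v]_h(t))\,dt \leq \liminf_{h\to0}\int_0^T \tvG([v]_h(t))\,dt \leq \int_0^T \tvG(v(t))\,dt,
\]
forcing equality throughout and giving the claimed limit. I do not anticipate any obstacle beyond the exterior-constraint check, since every analytic ingredient (mollification estimates, Fubini, lower semicontinuity) has a direct Gagliardo analogue already recorded earlier in the excerpt.
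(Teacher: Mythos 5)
Your proposal is correct and matches the paper's own proof essentially verbatim: the paper likewise reduces to the argument of \cref{convtvrfunc}, substitutes \cite[Lemma 3.8]{Antil2024} for the lower semicontinuity input, and isolates the exterior constraint $[v]_h(x,t)=v_0(x)$ for $x\in\bbr\setminus\Omega$ as the only new point, verified by the same weight-normalisation computation $\exp(-t/h)+\frac1h\int_0^t \exp\left(\tfrac{s-t}{h}\right)ds=1$ that you give.
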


\begin{proof}
 The proof is identical with the proof of \cref{convtvrfunc} except for the use of \cite[Lemma 3.8]{Antil2024} for the lower semicontinuity of $\tvG$ seminorm with respect to $L^1$ convergence. We additionally need to check that for a.e. $t\in[0,T]$ and $x\in\bbr\setminus\Omega$, it holds that $[v]_h(x,t)=v_0(x)$. This follows from the definition \cref{timemolli} whence for $x\notin\Omega$, we have
 \begin{align*}
     [v]_h(x,t) = \exp(-t/h)v_0(x) + \frac{1}{h}\int_0^t \exp\left(\tfrac{s-t}{h}\right)v_0(x) \,ds=v_0(x),
 \end{align*} where we use the fact that $v(x,t)=v_0(x)$ a.e. $x\in\bbr\setminus\Omega$ and $t\in[0,T]$.
\end{proof}

The following two corollaries are immediate consequences of \cref{convdeblurlem}, \cref{convtvrfunc} and \cref{convdenoiselem}, \cref{convtvGfunc}.

\begin{cor}\label{convfuncR}
    Suppose that $v_0 \in \bvr(\Omega)$ with $F(v_0) < +\infty$. Let $v \in L_w^1\left(0,T;\bvr(\Omega)\right)$ with $\int_0^T F(v(t)) \,dt < +\infty.$ Then 
    \begin{itemize}
        \item[(i)] \[
        [v]_h \in L_w^1\left(0,T;\bvr(\Omega)\right),
        \]
        \item[(ii)] \[
         \FenR([v]_h(t)) \leq [\FenR(v)]_h(t) \quad \mbox{ for every } \quad 0<t<T, \quad \mbox{ and }
        \] 
        \item[(iii)] \[
        \lim_{h \searrow 0}\int_0^T \FenR([v]_h(t))\,dt = \int_0^T \FenR(v)(t) \,dt. 
        \]
    \end{itemize}
\end{cor}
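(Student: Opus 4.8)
The plan is to decompose $\FenR$ into its two constituent pieces --- the fractional total variation $\tvr$ and the quadratic deblurring fidelity $\frac{\kappa}{2}\int_\Omega|K[\cdot]-u_0|^2\,dx$ --- and to treat each summand separately using the two results already at hand. The key structural observation is that the time-mollification operator $[\cdot]_h$ from \cref{timemolli} is linear in the time-dependent scalar quantity to which the functional is applied, and that its initial datum $\FenR(v_0)$ splits correspondingly as $\tvr(v_0)+\frac{\kappa}{2}\int_\Omega|K[v_0]-u_0|^2\,dx$. Consequently, for every $t$,
\[
[\FenR(v)]_h(t) = [\tvr(v)]_h(t) + \frac{\kappa}{2}\left[\int_\Omega|K[v(\cdot)]-u_0|^2\,dx\right]_h(t).
\]
With this identity in place, each of the three assertions reduces to the matching assertion for the two pieces.

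First I would check that the hypotheses of both input lemmas are met. Since $v_0\in\bvr(\Omega)\subset L^1(\Omega)$, since property (A) of $\lwbv$ gives $v\in L^1(\Omega_T)$, and since $u_0\in L^2(\Omega)$, all assumptions of \cref{convdeblurlem} hold; meanwhile the finite-energy bounds $\FenR(v_0)<+\infty$ and $\int_0^T\FenR(v(t))\,dt<+\infty$ guarantee that the scalar maps $t\mapsto\tvr(v(t))$ and $t\mapsto\int_\Omega|K[v(t)]-u_0|^2\,dx$ are integrable, so their mollifications are well defined. Statement (i) is then immediate from \cref{convtvrfunc}(i).

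For (ii), I would estimate $\FenR([v]_h(t))=\tvr([v]_h(t))+\frac{\kappa}{2}\int_\Omega|K[[v]_h(t)]-u_0|^2\,dx$ term by term. The total variation term is controlled by $[\tvr(v)]_h(t)$ via \cref{convtvrfunc}(ii). For the fidelity term, \cref{convdeblurlem} supplies the pointwise-in-$x$ inequality $|K[[v]_h(t)]-u_0|^2\leq[|K[v(\cdot)]-u_0|^2]_h(t)$; integrating over $\Omega$ and commuting the spatial integral past the mollification (Fubini, as $[\cdot]_h$ is a convolution in time) yields the bound by $[\int_\Omega|K[v(\cdot)]-u_0|^2\,dx]_h(t)$. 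Adding the two bounds and invoking the decomposition above produces exactly $[\FenR(v)]_h(t)$. Finally, (iii) follows by splitting $\int_0^T\FenR([v]_h(t))\,dt$ into its total variation and fidelity integrals and letting $h\searrow0$ in each, using \cref{convtvrfunc}(iii) for the former and the last assertion of \cref{convdeblurlem} for the latter.

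There is no genuine obstacle here, as the mathematical content resides entirely in the two lemmas this corollary quotes; the proof is essentially a linearity argument. The only points demanding minor care are the interchange of spatial integration with time-mollification in step (ii), and the bookkeeping ensuring that the initial datum $\FenR(v_0)$ is split consistently across the two summands, so that the linearity identity for $[\FenR(v)]_h$ holds exactly rather than merely up to the additive contributions of the two initial data.
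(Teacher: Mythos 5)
Your proposal is correct and is exactly the argument the paper intends: the paper states the corollary as an ``immediate consequence'' of \cref{convtvrfunc} and \cref{convdeblurlem}, relying precisely on the additive decomposition $\FenR = \tvr + \frac{\kappa}{2}\int_\Omega|K[\cdot]-u_0|^2\,dx$ together with the linearity of the mollification $[\cdot]_h$ in the scalar quantity (with the initial datum $\FenR(v_0)$ split accordingly). Your added care about Fubini for interchanging the spatial integral with the time convolution, and about the consistent splitting of the initial datum, fills in the routine details the paper leaves implicit.
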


\begin{cor}\label{convfuncG}
    Suppose that $v_0 \in \bvG(\bbR)$ with $\FenG(v_0) < +\infty$. Let $v \in \lwbvGzvz$ with $\int_0^T \FenG(v(t)) \,dt < +\infty.$ Then 
    \begin{itemize}
        \item[(i)] \[
        [v]_h \in \lwbvGzvz,
        \]
        \item[(ii)] \[
         \FenG([v]_h(t)) \leq [\FenG(v)]_h(t) \quad \mbox{ for every } \quad 0<t<T, \quad \mbox{ and }
        \] 
        \item[(iii)] \[
        \lim_{h \searrow 0}\int_0^T \FenG([v]_h(t))\,dt = \int_0^T \FenG(v)(t) \,dt. 
        \]
    \end{itemize}
\end{cor}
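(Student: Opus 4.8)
The plan is to exploit the additive structure $\FenG = \tvG(\cdot,\bbr) + \mathfrak{R}$ and treat the two summands separately, since the regularisation term is governed by \cref{convtvGfunc} and the regression term by \cref{convdenoiselem}; linearity of the mollification operator $[\cdot]_h$ then lets me recombine them. Part (i) is immediate, being exactly the conclusion of \cref{convtvGfunc}(i) under the present hypotheses $v_0 \in \bvG(\bbr)$ and $v \in \lwbvGzvz$, so the regression term plays no role in membership.

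For part (ii), I would first record that the mollification acts as a probability average in the time variable. From the definition \cref{timemolli}, $[v]_h(x,t)$ is the average of $v_0(x)$ with weight $\exp(-t/h)$ and of $v(x,s)$, $s\in(0,t)$, with weight $\frac{1}{h}\exp((s-t)/h)$, and a direct computation gives $\exp(-t/h) + \frac{1}{h}\int_0^t \exp((s-t)/h)\,ds = 1$. Hence, for a.e. $x$, the value $[v]_h(x,t)$ is a convex combination of $v_0(x)$ and $\{v(x,s)\}_{s\in(0,t)}$. Applying Jensen's inequality to the convex partial map $u\mapsto \Ren(x,u)$ and integrating over $\Omega$ (using Tonelli to swap the $x$- and $s$-integrations) yields $\mathfrak{R}([v]_h(t)) \leq [\mathfrak{R}(v)]_h(t)$, where $[\mathfrak{R}(v)]_h$ denotes the mollification of the scalar map $t\mapsto \mathfrak{R}(v(t))$ with datum $\mathfrak{R}(v_0)$. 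Combining this with $\tvG([v]_h(t)) \leq [\tvG(v)]_h(t)$ from \cref{convtvGfunc}(ii) and using linearity of $[\cdot]_h$, so that $[\tvG(v)]_h + [\mathfrak{R}(v)]_h = [\FenG(v)]_h$, delivers (ii).

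For part (iii), I would add the two limits term by term. The regularisation part converges by \cref{convtvGfunc}(iii). For the regression part I would invoke \cref{convdenoiselem} with $S = \Ren$; its hypotheses are met because $v\in\lone(\bbr\times(0,T))$ and $v_0 \in \bvG(\bbr)\subset\lone(\bbr)$ restrict to $L^1(\Omega_T)$ and $L^1(\Omega)$, while $\int_0^T \mathfrak{R}(v(t))\,dt \leq \int_0^T \FenG(v(t))\,dt < \infty$ and $\mathfrak{R}(v_0)\leq\FenG(v_0)<\infty$ furnish the required integrability of $S(\cdot,v)$ and $S(\cdot,v_0)$. This gives $\int_0^T \mathfrak{R}([v]_h(t))\,dt \to \int_0^T \mathfrak{R}(v(t))\,dt$, and summing with the total-variation limit proves (iii).

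The only step that is not pure bookkeeping is the Jensen inequality underlying (ii), and there the one thing to get right is the verification that the time weights in \cref{timemolli} normalise to one, so that the mollification is genuinely an average; once that is in hand, convexity of $\Ren(x,\cdot)$ does the work and the rest is a direct appeal to \cref{convtvGfunc} and \cref{convdenoiselem} together with the linearity of $[\cdot]_h$.
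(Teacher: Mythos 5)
Your proof is correct and takes essentially the same route as the paper: \cref{convfuncG} is stated there as an immediate consequence of \cref{convtvGfunc} and \cref{convdenoiselem}, via exactly your additive decomposition $\FenG=\tvG(\cdot;\bbr)+\mathfrak{R}$ together with linearity of $[\cdot]_h$. Your Jensen argument for (ii) --- including the check that the weights $\exp(-t/h)$ and $\tfrac1h\exp((s-t)/h)$ integrate to one --- correctly supplies the pointwise inequality $\mathfrak{R}([v]_h(t))\leq[\mathfrak{R}(v)]_h(t)$ for the regression term, which \cref{convdenoiselem} does not state explicitly (it is contained in the proof cited from the literature), so nothing is missing.
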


\subsection{Localization to subcylinders}\label{localizesub} In this section, we will prove that if $u\in \lwbvGz$ is a variational solution in $\Omega_T$ in the sense of \cref{def:solg} then for any $s_1,s_2\in\bbR$ satisfying $0\leq s_1<s_2\leq T$, the function $u$ restricted to the interval $(s_1,s_2)$ is a variational solution in $\Omega\times (s_1,s_2)$. 

To this end, we take a function $v\in \textbf{L}^1_{\textbf{w}}(s_1,s_2;\bvG(\Omega;u_0))$ with $\partial_t v\in L^2(\Omega\times (s_1,s_2))$, $v(s_1)\in L^2(\Omega)$ and a cutoff function defined for a fixed $\theta\in(0,\frac12(s_2-s_1))$
\begin{align*}
    \zeta_\theta(t):=\begin{cases}
        0 &\mbox{ for }t\in[0,s_1],\\
        \frac1\theta(t-t_1)&\mbox{ for }t\in (s_1,s_1+\theta),\\
        1&\mbox{ for }t\in[s_1+\theta,s_2-\theta],\\
        \frac1\theta(s_2-t)&\mbox{ for }t\in(s_2-\theta,s_2),\\
        0&\mbox{ for }t\in[s_2,T].
    \end{cases}
\end{align*}
We will test the definition of variational solutions as in \cref{def:solg} with the comparison map defined by $\tilde{v}:=\zeta_\theta v + (1-\zeta_\theta)[u]_h$. The function $\zeta_\theta$ has been extended outside of $(s_1,s_2)$ by $0$.

We begin by checking that $\tilde{v}$ is a valid comparison map for \cref{def:solg}. From \cref{convtvGfunc}, we have $[u]_h\in\lwbvGz$, therefore, $\tilde{v}\in\lwbvGz$. The boundary condition is inherited since $\tilde{v}$ is a convex combination of two functions both satisfying the same boundary condition. Also, by \cref{timemollithem2}, $\de_t \tilde{v}\in L^2(\Omega_T)$ since $\de_t[u]_h\in L^2(\Omega_T)$. We finally need to show that $\tilde{v}(0)\in L^2(\Omega)$. This is true since $[u]_h\in C^0([0,T];\Omega)$ on account of $u\in C^0([0,T];\Omega)$. We will now make use of $\tilde{v}$ in \cref{eq:solg}.
\begin{gather}\label{timelocali1}
    \begin{aligned}
        \int_0^T \FenG(u(t)) \,dt \leq& \int_0^T\int_{\Omega}\partial_t \tilde{v}(\tilde{v}-u) \,dx \,dt + \int_0^T \FenG(\tilde{v}(t))\,dt\\
        &-\frac{1}{2}\|(\tilde{v}-u)(T)\|_{L^2(\Omega)}^2+\frac{1}{2}\|\tilde{v}(0)-u_0\|_{L^2(\Omega)}^2.
    \end{aligned}
\end{gather}

We first note that the first integrand on the right hand side (RHS) can be written as
\begin{align*}
    \de_t\tilde{v}(\tilde{v}-u) &= \de_t(\zeta_\theta v+(1-\zeta_\theta)[u]_h)(\zeta_\theta v+(1-\zeta_\theta)[u]_h-u)\\
    &=\zeta_\theta \de_t v(\zeta_\theta v+(1-\zeta_\theta)[u]_h-u+\zeta_\theta u-\zeta_\theta u)\\
    &\qquad (1-\zeta_\theta)\de_t[u]_h(([u]_h-u)+\zeta_\theta(v-[u]_h))\\
    &\qquad\qquad \de_t\zeta_\theta (v-[u]_h)(([u]_h-u)+\zeta_\theta(v-[u]_h))\\
    &=\zeta_\theta^2\de_t v(v-u) +\zeta_\theta(1-\zeta_\theta)\de_t v([u]_h-u)+(1-\zeta_\theta)(\de_t [u]_h)([u]_h-u)\\
    &\qquad +\zeta_\theta (1-\zeta_\theta)(\de_t[u]_h)(v-[u]_h) +\zeta_{\theta}^{'}(v-[u]_h)([u]_h-u) + \zeta_{\theta}^{'}\zeta_\theta(v-[u]_h)^2\\
    &=\zeta_\theta^2\de_t v(v-u) +(1-\zeta_\theta)\de_t [u]_h([u]_h-u)\\
    &\qquad+ \zeta_\theta (1-\zeta_\theta)(\de_t v([u]_h-u)+\de_t[u]_h(v-[u]_h))\\
    &\qquad\qquad \zeta_{\theta}^{'}(v-[u]_h)([u]_h-u) + \zeta_{\theta}^{'}\zeta_\theta(v-[u]_h)^2.
\end{align*}
Therefore, we write the first term on the RHS of \cref{timelocali1} as 
\begin{align*}
    \int_0^T\int_{\Omega}\partial_t \tilde{v}(\tilde{v}-u) \,dx \,dt&=\underbrace{\int_{s_1}^{s_2}\int_{\Omega}\zeta_\theta^2\de_t v(v-u)\,dx \,dt}_{I_\theta}+\underbrace{\int_0^T\int_{\Omega}(1-\zeta_\theta)\de_t [u]_h([u]_h-u)\,dx \,dt}_{II_{\theta}}\\
    &\qquad+\underbrace{\int_{s_1}^{s_2}\int_{\Omega}\zeta_\theta (1-\zeta_\theta)(\de_t v([u]_h-u)+\de_t[u]_h(v-[u]_h))\,dx \,dt}_{III_{\theta}}\\
    &\qquad\qquad+\underbrace{\int_{s_1}^{s_2}\int_{\Omega}\zeta_{\theta}^{'}(v-[u]_h)([u]_h-u)\,dx \,dt}_{IV_\theta}+\underbrace{\int_{s_1}^{s_2}\int_{\Omega}\zeta_{\theta}^{'}\zeta_\theta(v-[u]_h)^2\,dx \,dt}_{V_\theta}.
\end{align*}
Through applications of Dominated Convergence Theorem, we have
\begin{align*}
    \lim_{\theta\downarrow 0} I_\theta &=\int_{s_1}^{s_2}\int_{\Omega}\de_t v(v-u)\,dx \,dt,\\
    \lim_{\theta\downarrow 0} II_\theta &=\int_{[0,s_1]\cup[s_2,T]}\int_{\Omega}\de_t [u]_h([u]_h-u)\,dx \,dt,\\
    \lim_{\theta\downarrow 0} III_\theta &=0,\\
    \lim_{\theta\downarrow 0} IV_\theta &=\int_{\Omega\times{\{s_1\}}}(v-[u]_h)([u]_h-u)\,dx-\int_{\Omega\times{\{s_2\}}}(v-[u]_h)([u]_h-u)\,dx,\\
    \lim_{\theta\downarrow 0} V_\theta &=\lim_{\theta\downarrow 0}\int_{s_1}^{s_1+\theta}\frac12\de_t(\zeta_{\theta}^2)\int_{\Omega}(v-[u]_h)^2\,dx \,dt+\lim_{\theta\downarrow 0}\int_{s_2-\theta}^{s_2}\frac12\de_t(\zeta_{\theta}^2)\int_{\Omega}(v-[u]_h)^2\,dx \,dt\\
    &=\frac12\int_{\Omega}(v-[u]_h)^2(x,s_1)\,dx-\frac12\int_{\Omega}(v-[u]_h)^2(x,s_2)\,dx,
\end{align*} the last two additionally using Lebesgue differentiation theorem in time for continuous functions.

At the same time, by convexity of $\FenG$, we get 
\begin{align}
    \FenG(\tilde{v}(t))\leq \zeta_\theta(t)\FenG(v(t))+(1-\zeta_\theta(t))\FenG([u]_h(t))
\end{align}
Finally, the last two terms on the RHS of \cref{timelocali1} become
\begin{align*}
    -\frac{1}{2}\|([u]_h-u)(T)\|_{L^2(\Omega)}^2+\frac{1}{2}\|[u]_h(0)-u_0\|_{L^2(\Omega)}^2
\end{align*} due to $\zeta_\theta(T)=\zeta_\theta(0)=0$.

Substituting in \cref{timelocali1} and passing to the limit as $\theta\to 0$, we obtain

\begin{gather}\label{timelocali2}
    \begin{aligned}
        \int_{s_1}^{s_2} \FenG(u(t)) \,dt \leq& \int_{s_1}^{s_2}\int_{\Omega}\partial_t v(v-u) \,dx \,dt + \int_{s_1}^{s_2} \FenG(v(t))\,dt\\
        &+\int_{[0,s_1]\cup[s_2,T]}\int_{\Omega}\de_t [u]_h([u]_h-u)\,dx+\left(\FenG([u]_h(t))-\FenG(u(t)) \right)\,dt\\
        &+\int_{\Omega\times{\{s_1\}}}(v-[u]_h)([u]_h-u)\,dx-\int_{\Omega\times{\{s_2\}}}(v-[u]_h)([u]_h-u)\,dx\\
        &+\frac12\int_{\Omega}(v-[u]_h)^2(x,s_1)\,dx-\frac12\int_{\Omega}(v-[u]_h)^2(x,s_2)\,dx\\
        &-\frac{1}{2}\|([u]_h-u)(T)\|_{L^2(\Omega)}^2+\frac{1}{2}\|[u]_h(0)-u_0\|_{L^2(\Omega)}^2.
    \end{aligned}
\end{gather}
Finally, we can pass to the limit in \cref{timelocali2} as $h\to0$ by applying \cref{timemollithm1} and \cref{convfuncG}. We also drop the term $\int_{[0,s_1]\cup[s_2,T]}\int_{\Omega}\de_t [u]_h([u]_h-u)\,dx$ since it is non-positive due to \cref{ODEmolli}. We are left with
\begin{gather}\label{timelocali3}
    \begin{aligned}
        \int_{s_1}^{s_2} \FenG(u(t)) \,dt \leq& \int_{s_1}^{s_2}\int_{\Omega}\partial_t v(v-u) \,dx \,dt + \int_{s_1}^{s_2} \FenG(v(t))\,dt\\
        &+\frac12\int_{\Omega}(v-u)^2(x,s_1)\,dx-\frac12\int_{\Omega}(v-u)^2(x,s_2)\,dx,    \end{aligned}
\end{gather} which is the definition of variational solution on the time interval $(s_1,s_2)$ as was required.

\begin{rem}\label{localiforsolr}
    The localization described in this section also works for solutions $u\in\lwbv$ in the sense of \cref{def:solr} with the obvious modifications.
\end{rem}

\subsection{Initial condition}
In this section, we will prove that the variational solutions satisfy the initial condition in the $L^2$ sense. More precisely
\begin{lem}
    Let $u$ be a variational solution to the $\tvr$-deblurring flow in the sense of \cref{def:solr}. Then the initial condition is achieved in the $L^2$-sense, which is to say that 
    \begin{align*}
            \lim_{t \searrow 0}\|u(t)-u_0\|_{L^2(\Omega)}^2 = 0.
    \end{align*}
\end{lem}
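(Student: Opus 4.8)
The plan is to localize the variational inequality to the initial subcylinder $\Omega\times(0,t_0)$ and to test it with the time-independent comparison map $v\equiv u_0$. Concretely, I would run the localization procedure of \cref{localizesub} (valid in the Riesz setting by \cref{localiforsolr}) on the interval $(0,t_0)$, using $\tilde v=\zeta_\theta u_0+(1-\zeta_\theta)[u]_h$ as comparison map and then sending $\theta\to0$ followed by $h\to0$. Two features of this choice drive the argument: first, $\partial_t v=0$, so the terms $I_\theta$ and $III_\theta$ (the ones carrying $\partial_t v$) drop out, and the contribution $II_\theta$ over the complementary interval is nonpositive by \cref{ODEmolli} and may be discarded; second, and crucially, at the initial time one has $[u]_h(0)=u_0$ by the very definition \cref{timemolli} of the mollification, so that $v(0)=u_0=[u]_h(0)$ and all the boundary contributions generated at $t=0$ cancel exactly.

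Carrying this out, and using $\FenR(u_0)<\infty$ from \cref{finiteenergydeblur} together with the convexity of $\FenR$ and \cref{convfuncR} to pass to the limit in the energy terms, I expect to arrive at the clean inequality
\[
\tfrac12\|u(t_0)-u_0\|_{L^2(\Omega)}^2+\int_0^{t_0}\FenR(u(t))\,dt\le t_0\,\FenR(u_0)
\qquad\text{for every }\ 0<t_0<T.
\]
Since $\FenR$ is nonnegative (it is the sum of the seminorm $\tvr\ge0$ and the fidelity term $\tfrac{\kappa}{2}\|K[u]-u_0\|_{L^2(\Omega)}^2\ge0$), the integral term may be thrown away, leaving $\|u(t_0)-u_0\|_{L^2(\Omega)}^2\le 2t_0\,\FenR(u_0)$. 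Letting $t_0\searrow0$ and recalling $\FenR(u_0)<\infty$ then yields $\lim_{t\searrow0}\|u(t)-u_0\|_{L^2(\Omega)}^2=0$, as claimed; the continuity $u\in C^0([0,T];L^2(\Omega))$ from \cref{def:solr} guarantees that $u(t_0)$ is a well-defined trace to which this applies.

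The main obstacle is the careful bookkeeping of the boundary terms at the initial endpoint $s_1=0$ in the localization. Away from $t=0$ the mollification satisfies $[u]_h(s_1)\to u(s_1)$ uniformly, so the endpoint terms collapse to $\tfrac12\|(v-u)(s_1)\|_{L^2(\Omega)}^2$ exactly as in \cref{localizesub}; but at $s_1=0$ the mollification is anchored at $u_0$ rather than at the interior trace $u(0)$, and $[u]_h(0)=u_0$ does \emph{not} converge to $u(0)$ unless the conclusion we are proving already holds. If one were to (incorrectly) identify the initial datum of the restricted solution with the interior trace $u(0)$, the constant comparison $v\equiv u_0$ would only produce the tautology $\tfrac12\|u_0-u(t_0)\|_{L^2(\Omega)}^2\le\tfrac12\|u_0-u(0)\|_{L^2(\Omega)}^2+o(1)$. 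The resolution is precisely to keep $u_0$, and not $u(0)$, as the anchor: choosing $v(0)=u_0$ makes the pair of initial boundary terms cancel and eliminates the interior trace from the estimate, which is exactly what renders the bound by $t_0\,\FenR(u_0)$ genuinely informative.
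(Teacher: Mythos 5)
Your proposal is correct and follows essentially the same route as the paper's own proof: localize the variational inequality to $\Omega\times(0,t_0)$ as in \cref{localizesub} (valid in the Riesz setting by \cref{localiforsolr}), insert the time-independent comparison map $v\equiv u_0$ to obtain $\tfrac12\|u(t_0)-u_0\|_{L^2(\Omega)}^2+\int_0^{t_0}\FenR(u(t))\,dt\le t_0\,\FenR(u_0)$, and let $t_0\searrow0$ using \cref{finiteenergydeblur}. Your explicit bookkeeping at the initial endpoint --- anchoring the mollification at $u_0$ so that $[u]_h(0)=u_0$ and the boundary term at $t=0$ is $\tfrac12\|v(0)-u_0\|_{L^2(\Omega)}^2=0$, rather than a term involving the unknown interior trace $u(0)$ --- is precisely the mechanism the paper uses implicitly, and you are right that replacing $u_0$ by $u(0)$ there would reduce the argument to a tautology.
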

\begin{proof}
    In \cref{localizesub}, we proved that $u$ is also a solution on subcylinders $\Omega\times[0,\tau]$ for any $\tau\in (0,T)$. We test the variational inequality \cref{timelocali3} with the time-independent extension of the initial-boundary data $u_0$ with $s_1=0$ and $s_2=\tau$, whence
    \begin{align}
        \int_{0}^{\tau} \FenR(u(t)) \,dt+\frac12\int_{\Omega}(u-u_0)^2(x,\tau)\,dx \leq& \int_{0}^{\tau} \FenR(u_0)\,dt=\tau\FenR(u_0)<\infty.
    \end{align} We obtain the desired convergence by taking limits on both sides as $\tau\to0$ and dropping the first term on the left hand side (LHS).
\end{proof}

\begin{rem}
    The variational solutions to $\tvG$ denoising flow in the sense of \cref{def:solg} also satisfies initial condition in the $L^2$-sense and the proof is similar.
\end{rem}

\section{Regularity in time} Regularity in time for solutions of parabolic equations is generally not available and hence we do not assume any such regularity for the variational solutions either (see \cref{def:solr} and \cref{def:solg}). However, on account of the time-independent boundary data, we are able to gain regularity in time as reported in \cref{timeregularity1} and \cref{timeregularity2}. Their proofs is the content of this section. The proofs are similar to those of \cite[Theorem 1.3]{Boegelein2015}. We repeat them here for the reader's convenience.

\begin{proof}(Proof of \cref{timeregularity1} and \cref{timeregularity2}) 
    Let $t_1,t_2\in\bbR$ with $0\leq t_1<t_2\leq T$. The variational solution $u$ in the sense of \cref{def:solr} is also a solution on the subcylinder $\Omega\times(t_1,t_2)$. Define $\tilde{u}(s)=u(s+t_1)$ for $s\in(0,t_2-t_1)$. If $u(t_1)\in\bvr(\Omega)$, then $\tilde{u}$ satisfies the variational inequality \cref{eq:sol} in the cylinder $\Omega\times[t_1,t_2]$ with initial data $u(t_1)$ instead of $u_0$. Indeed, we choose such a $t_1$, which is possible to achieve since $u(t)\in\bvr(\Omega)$ for a.e. $t\in[0,T]$. Further, the function $v=[\tilde{u}]_h$ is an admissible comparison map since $v(0)=[\tilde{u}]_h(0)=u(t_1)$ as argued before in \cref{localizesub}. Testing with $v=[\tilde{u}]_h$, we drop the negative term from the RHS of the variational inequality and notice that the initial time term is zero on account of the initial data for $u$ and $v$ matching up as defined
    \begin{align*}
        -\int_0^{t_2-t_1}\int_\Omega \de_t[\tilde{u}]_h\left([\tilde{u}]_h-u\right)\,dx\,dt&\leq \int_0^{t_2-t_1}\FenR([\tilde{u}]_h(t))-\FenR(\tilde{u}(t))\,dt\\
        &\stackrel{\cref{convtvrfunc}}{\leq} \int_0^{t_2-t_1}[\FenR(\tilde{u}(t)]_h-\FenR(\tilde{u}(t))\,dt\\
        &\stackrel{\cref{ODEmolli}}{\leq} -h\int_0^{t_2-t_1}\de_t[\FenR(\tilde{u}(t))]_h\,dt\\
        &=h\left(\FenR(\tilde{u}(0))-[\FenR(\tilde{u}(t))]_h(t_2-t_1)\right).
    \end{align*}
    Using \cref{ODEmolli}, we get 
    \begin{align*}
        \int_0^{t_2-t_1}\int_\Omega |\de_t[\tilde{u}]_h|^2\,dx\,dt\leq \left(\FenR(\tilde{u}(0))-[\FenR(\tilde{u}(t))]_h(t_2-t_1)\right)\leq \FenR(\tilde{u}(0)).
    \end{align*}
    The previous display ensures the existence of $\de_t\tilde{u}\in L^2(\Omega\times(0,t_2-t_1)$ due to the uniformity of the estimate with respect to the parameter $h$. It comes with the quantitative estimate
    \begin{align*}
        \int_0^{t_2-t_1}\int_\Omega |\de_t\tilde{u}|^2\,dx\,dt\leq \FenR({u}(t_1)).
    \end{align*}
    Applying \cref{convtvrfunc} to the second last display, we obtain
    \begin{align*}
        \int_0^{t_2-t_1}\int_\Omega |\de_t\tilde{u}|^2\,dx\,dt\leq \FenR(\tilde{u}(0))-\FenR(\tilde{u}(t_2-t_1)).
    \end{align*} Reverting to $u$, we obtain
    \begin{align}\label{l2boundfordetu}
        \int_{t_1}^{t_2}\int_\Omega |\de_t{u}|^2\,dx\,dt\leq \FenR({u}(t_1))-\FenR({u}(t_2)).
    \end{align} The previous inequality holds a.e. $t_1, t_2\in [0,T]$. This proves part of the assertions in the statement of the theorem. 

    Next, let us estimate using H\"older's inequality as follows
    \begin{align*}
        \|u(t_2)-u(t_1)\|^2_{L^2(\Omega)}&=\int_{\Omega}\left|\int_{t_1}^{t_2}\de_t u\,dt\right|^2\,dx\\
        &\leq |t_2-t_1|\int_{t_1}^{t_2}\int_\Omega|\de_t u|^2\,dx\,dt\stackrel{\cref{l2boundfordetu}}{\leq} |t_2-t_1|\FenR(u(t_1)).
    \end{align*}
    Choosing $t_1=0$, we get
    \begin{align*}
        \int_\Omega|u(t)|^2\,dx\leq 2\int_\Omega|u_0|^2\,dx + 2\int_\Omega |u(t)-u(0)|^2\,dx\leq 2\|u_0\|^2_{L^2(\Omega)}+2t\FenR(u_0),
    \end{align*} which proves that $u\in C^{0,\frac12}([0,\tau],L^2(\Omega))$ for any $\tau\in (0,T]$.

    To prove the remaining estimate in \cref{timeregularity1}, with the time-regularity $\de_tu\in L^2(\Omega_T)$ in place, we can integrate by parts in \cref{eq:sol} to get
    \begin{align}\label{varineq2}
        \int_{0}^{\tau} \FenR(u(t)) \,dt \leq& \int_{0}^{\tau}\int_{\Omega}\partial_t u(v-u) \,dx \,dt + \int_{0}^{\tau} \FenR(v(t))\,dt
    \end{align} for any $\tau\in (0,T]$. Now, for $t_1,t_2\in\bbR$ satisfying $0\leq t_1<t_2\leq \tau$ we define
    \begin{align*}
        \zeta_{t_1, t_2}(t):=\begin{cases}
            1&\mbox{ if }t\in t\in[o,t_1]\\
            \frac{t_2-t}{t_2-t_1}&\mbox{ if }t\in(t_1,t_2),\\
            0&\mbox{ if }t\in[t_2,\tau].
        \end{cases} 
    \end{align*}
    We define the comparison map $v=u+\zeta_{t_1,t_2}([u]_h-u)$ to be used in the variational inequality \cref{varineq2}. The admissibility of $v$ as a comparison map can be argued similarly to how it was done earlier in \cref{localizesub}. We make use of the convexity of the functional $\FenR$ and \cref{convtvrfunc} to get
    \begin{align}\label{varineq3}
        \int_{0}^{t_2} \FenR(u(t)) \,dt \leq& \int_{0}^{t_2}\int_{\Omega}\zeta_{t_1,t_2}\partial_t u([u]_h-u) \,dx \,dt + \int_{0}^{t_2} (1-\zeta_{t_1,t_2})\FenR(u(t))+\zeta_{t_1,t_2}[\FenR(u(t))]_h\,dt.
    \end{align}
    This leads to the following chain of inequalities
    \begin{align*}
        0&\leq \int_0^{t_2}\int_\Omega \zeta_{t_1,t_2}\de_tu([u]_h-u)\,dx\,dt + \int_{0}^{t_2}\zeta_{t_1,t_2}[[\FenR(u)]_h-\FenR(u)]\,dt\\
        &\stackrel{\cref{ODEmolli}}{=}-h\int_0^{t_2}\int_\Omega \zeta_{t_1,t_2}\de_tu\de_t[u]_h\,dx\,dt - h\int_{0}^{t_2}\zeta_{t_1,t_2}\de_t[\FenR(u)]_h\,dt\\
        &= -h\int_0^{t_2}\int_\Omega \zeta_{t_1,t_2}\de_tu\de_t[u]_h\,dx\,dt + h\int_0^{t_2}\zeta_{t_1,t_2}^{'}[\FenR(u)]_h\,dt + h\FenR(u_0),
    \end{align*} where we integrate by parts in the last equation. Finally, divide both sides by $h$ and let $h\to 0$ to get
    \begin{align*}
        \frac{1}{t_2-t_1}\int_{t_1}^{t_2}\FenR(u(t))\,dt \leq \FenR(u_0) - \int_0^{t_2}\int_\Omega \zeta_{t_1,t_2}|\de_t u|^2\,dx\,dt\leq \FenR(u_0).
    \end{align*}
    This finishes the proof. The proof of \cref{timeregularity2} is similarly achieved with obvious modifications.
\end{proof}

\section{Uniqueness theorems}

\subsection{Proof of \cref{uniqueness1}} The uniqueness of the flow corresponding to $\tvr$ deblurring as defined in \cref{def:solr} follows from the strong convexity of the functional $\FenR$ which is a result of the injectivity of the operator $K$. We prove this result below.
\begin{lem}
    Suppose that $K$ is injective. Then for any $0<T < \infty$ and any initial data $u_0 \in L^2(\Omega) \cap \bvr(\Omega)$ there is at most one variational solution to the $\tvr$-deblurring flow in the sense of \cref{def:solr}. 
\end{lem}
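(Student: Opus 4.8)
The plan is to exploit the strong convexity of $\FenR$ that the injectivity of $K$ provides, combined with the standard device of testing each solution's variational inequality against a single time-mollified competitor built from the average of the two solutions. Let $u_1,u_2$ be two global variational solutions in the sense of \cref{def:solr}, set $g:=u_1-u_2$ and $w:=\tfrac12(u_1+u_2)$, and recall from the initial-condition lemma established above that $u_1(0)=u_2(0)=u_0$ in $L^2(\Omega)$, so $g(0)=0$ and $w(0)=u_0$. The quantitative ingredient is the parallelogram identity for the fidelity term: since $K$ is linear, $K[w]-u_0=\tfrac12\big((K[u_1]-u_0)+(K[u_2]-u_0)\big)$, and together with the convexity of the seminorm $\tvr$ this yields the pointwise (in time) strong-convexity estimate
\begin{align*}
2\,\FenR(w)\le \FenR(u_1)+\FenR(u_2)-\tfrac{\kappa}{4}\,\|K[u_1-u_2]\|_{L^2(\Omega)}^2 .
\end{align*}
This is the precise sense in which injectivity of $K$ makes $\FenR$ strongly convex, namely in the $K$-seminorm.

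I would then take the comparison map $v=[w]_h$, the time mollification of $w$ with initial value $u_0$ as in \cref{timemolli}. It is admissible: $[w]_h\in\lwbv$ by \cref{convtvrfunc}, $\partial_t[w]_h\in L^2(\Omega_T)$ by \cref{timemollithem2}, and $[w]_h(0)=w(0)=u_0\in L^2(\Omega)$. Testing \cref{eq:sol} for $u_1$ and for $u_2$ with this \emph{same} map $v=[w]_h$ and adding the two inequalities, the parabolic contributions combine into $\int_0^T\!\int_\Omega \partial_t[w]_h\,(2[w]_h-u_1-u_2)$; since $u_1+u_2=2w$ and $\partial_t[w]_h=-\tfrac1h([w]_h-w)$ by \cref{ODEmolli}, this equals $-\tfrac2h\int_0^T\!\int_\Omega([w]_h-w)^2\le 0$ and may simply be discarded. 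The initial boundary terms $\tfrac12\|[w]_h(0)-u_0\|_{L^2(\Omega)}^2$ vanish.

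It then remains to pass to the limit $h\searrow 0$. Using $\FenR([w]_h(t))\le[\FenR(w)]_h(t)$ from \cref{convfuncR}, the mollified version of the strong-convexity estimate above (mollification being linear and order-preserving), the convergences $\int_0^T[\FenR(u_i)]_h\,dt\to\int_0^T\FenR(u_i)\,dt$ (finite since solutions have finite energy, cf. \cref{finiteenergydeblur}) and $[w]_h(T)\to w(T)$ in $L^2(\Omega)$ (from $u_i\in C^0([0,T];L^2(\Omega))$), the terms $\FenR(u_1)+\FenR(u_2)$ cancel between the two sides and one is left with
\begin{align*}
0\le -\tfrac{\kappa}{4}\int_0^T\|K[u_1-u_2]\|_{L^2(\Omega)}^2\,dt-\tfrac14\,\|(u_1-u_2)(T)\|_{L^2(\Omega)}^2 .
\end{align*}
Both summands are nonnegative, hence both vanish. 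In particular $K[u_1(t)-u_2(t)]=0$ for a.e.\ $t\in(0,T)$, and the injectivity of $K$ forces $u_1(t)=u_2(t)$ for a.e.\ $t$, which is the claim. (The terminal boundary term gives the same conclusion at time $T$, and rerunning the argument on each subcylinder via \cref{localiforsolr} would recover it at every time, but the $K$-term already settles almost every time at once.)

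The step I expect to be most delicate is the limit passage $h\searrow 0$: one must check that the one-sided bound $\FenR([w]_h)\le[\FenR(w)]_h$ is compatible with the strong-convexity inequality after mollifying in time, and that no residual contribution survives from the quadratic mollification error $\tfrac1h([w]_h-w)^2$. Here the favourable sign of that error is exactly what allows us to drop it rather than estimate it, and this is the reason for testing \emph{both} inequalities with the single competitor $[w]_h$ rather than with the individual mollifications $[u_1]_h,[u_2]_h$, for which the analogous error term carries the wrong sign.
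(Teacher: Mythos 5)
Your proof is correct, and although it shares the paper's central device --- testing both variational inequalities \cref{eq:sol} against a single competitor built from the midpoint $w=\tfrac12(u_1+u_2)$ and exploiting the convexity that the injectivity of $K$ confers on $\FenR$ --- your execution differs from the paper's in two substantive ways. First, the paper simply takes $v=w$ as the comparison map; the admissibility of this choice ($\partial_t w\in L^2(\Omega_T)$) is borrowed from the time-regularity theorem \cref{timeregularity1}, so the paper's uniqueness proof is not independent of the existence/regularity theory --- a dependence the paper itself flags in the remark following its uniqueness proofs, where it suggests that a mollification argument could remove it. Your proof realizes exactly that suggestion: by testing with $[w]_h$ (admissible by \cref{convtvrfunc} and \cref{timemollithem2}), using the favourable sign $\partial_t[w]_h\,([w]_h-w)=-\tfrac1h([w]_h-w)^2\le 0$ from \cref{ODEmolli}, and passing to the limit via \cref{convfuncR} and \cref{timemollithm1}, you never invoke $\partial_t u_i\in L^2$. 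Second, where the paper uses qualitative strict convexity and concludes by contradiction, you upgrade this to the quantitative parallelogram estimate $2\,\FenR(w)\le \FenR(u_1)+\FenR(u_2)-\tfrac{\kappa}{4}\|K[u_1-u_2]\|_{L^2(\Omega)}^2$ (your constant is correct), which yields directly $K[u_1(t)-u_2(t)]=0$ for a.e.\ $t$ together with terminal-time control, and injectivity finishes. What each buys: the paper's argument is shorter; yours is self-contained relative to the time-regularity machinery and produces a quantitative inequality rather than a pure dichotomy. Two points you handle correctly but that deserve emphasis: the cancellation of $\int_0^T(\FenR(u_1)+\FenR(u_2))\,dt$ across the two sides is legitimate only because variational solutions have finite energy (comparison with the time-independent extension of $u_0$), which you cite; and the monotonicity of the mollification $[\cdot]_h$ used to transfer the strong-convexity bound requires comparing with the \emph{same} initial value $\FenR(u_0)$ on both sides, which holds here since $u_1(0)=u_2(0)=u_0$ by the initial-condition lemma.
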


\begin{proof}
    Suppose that $u_1,u_2\in \lwbv\cap C^0([0,T];L^2(\Omega))$ be two distinct variational solutions to the $\tvr$-deblurring flow in the sense of \cref{def:solr} with initial data $u_0$. We add the two variational inequalities in \cref{eq:sol} corresponding to $u_1$ and $u_2$. We drop the final-time term from the inequality since it is non-positive and on the RHS.
    \begin{align}
        \int_0^T \FenR(u_1(t))+\FenR(u_2(t))\,dt \leq 2 \int_0^T\int_\Omega \de_t v(v-w)\,dx + \FenR(v(t))\,dt + \|v(0)-u_0\|^2_{L^2(\Omega)},
    \end{align} where $w=(u_1+u_2)/2$. We can take $v=w$ as the comparison map. It is straightforward to check that the function $w$ is admissible. Observe that $w(0)=u_0$ so that we get
    \begin{align*}
        \frac12\int_0^T \FenR(u_1(t))+\FenR(u_2(t))\,dt\leq \FenR(w(t))\,dt < \frac12\int_0^T \FenR(u_1(t))+\FenR(u_2(t))\,dt,
    \end{align*} where the last inequality follows from the strict convexity of $\FenR$ which itself is a consequence of the injectivity of $K$. This leads to the desired contradiction. 
\end{proof}

\subsection{Proof of \cref{uniqueness2}}\label{sec:uni} The uniqueness for the variational solution for $\tvG$ denoising flow as in \cref{def:solg} is proved below. This stipulates that for boundary conditions specified in the complement of $\Omega$, i.e., on $(\bbr\setminus\Omega)\times[0,T]$, the solutions are uniquely determined by them. On the other hand, specifying a boundary data merely on the boundary of $\Omega$ does not make sense due to the absence of a suitable trace operator. The proof is taken from \cite[Theorem 3.3]{Zhou1992}, where a comparison principle is also proved for variational solutions. This method of uniqueness, which is valid for all monotone operators, already appears in \cite{Lions1969}.

\begin{proof}(Proof of \cref{uniqueness2})
    Let $u, \tilde{u}\in \lwbvGz$ be two solutions to the $\tvG$-denoising flow as in \cref{def:solg}. By time-regularity proved in \cref{timeregularity2}, we have $\de_t u, \de_t\tilde{u}\in L^2(\Omega_T)$.
   Therefore we can integrate by parts in \cref{eq:solg} to get
    \begin{align}\label{varineq6}
        \int_{0}^{\tau} \FenG(u(t)) \,dt \leq& \int_{0}^{\tau}\int_{\Omega}\partial_t u(v-u) \,dx \,dt + \int_{0}^{\tau} \FenG(v(t))\,dt
    \end{align} for any $\tau\in (0,T]$ and all admissible comparison maps $v$. Taking $v=\tilde{u}$ in \cref{varineq6}, we get
    \begin{align}\label{varineq7}
        \int_{0}^{\tau} \FenG(u(t)) \,dt \leq& \int_{0}^{\tau}\int_{\Omega}\partial_t u(\tilde{u}-u) \,dx \,dt + \int_{0}^{\tau} \FenG(\tilde{u}(t))\,dt.
    \end{align}
    Interchanging the role of $u$ and $\tilde{u}$, we get
    \begin{align}\label{varineq8}
        \int_{0}^{\tau} \FenG(\tilde{u}(t)) \,dt \leq& \int_{0}^{\tau}\int_{\Omega}\partial_t \tilde{u}(u-\tilde{u}) \,dx \,dt + \int_{0}^{\tau} \FenG({u}(t))\,dt.
    \end{align}
    Adding \cref{varineq7} and \cref{varineq8}, we obtain
    \begin{align*}
        0&\leq \int_{0}^{\tau}\int_{\Omega}\partial_t u(\tilde{u}-u) \,dx \,dt+\int_{0}^{\tau}\int_{\Omega}\partial_t \tilde{u}(u-\tilde{u}) \,dx \,dt\\
        &=-\int_0^\tau\int_\Omega \de_t (u-\tilde{u})(u-\tilde{u})\,dx\,dt\\
        &=-\frac12\int_0^\tau\int_\Omega \de_t (u-\tilde{u})^2\,dx\,dt.
    \end{align*}
    Integrating the last expression, we get
    \begin{align*}
        0\leq -\frac12\int_\Omega (u-\tilde{u})^2(\tau)\,dx+\frac12\int_\Omega (u-\tilde{u})^2(0)\,dx = -\frac12\int_\Omega (u-\tilde{u})^2(\tau)\,dx,
    \end{align*} on account of the initial condition. We are therefore left with
    \begin{align*}
        \int_\Omega (u-\tilde{u})^2(\tau)\,dx \leq 0
    \end{align*} for any $\tau\in (0,T)$. Hence, $u(x,t)=\tilde{u}(x,t)$ a.e. $(x,t) \in\Omega\times(0,T)$.
\end{proof}
\begin{rem}
    We note that the proof of \cref{uniqueness2} also works in the case of \cref{uniqueness1}. The proofs both use the time regularity that comes from existence theory which itself makes use of convexity (but not strict convexity.) It may be possible to provide a proof of uniqueness without using the time regularity (and hence a proof independent of the existence theory) by using time mollification similarly to \cite[Section 4]{Prasad2023}.
\end{rem}

\section{Existence theorems}
In this section, we will prove the existence theorem \cref{existence1} and indicate the modifications required for the proof of \cref{existence2} since both are similar.

\subsection{A minimising sequence} Let $0<T<\infty$. For $0<\varepsilon\leq 1$ let
\[
\mathcal{F}_{\varepsilon}(v) = \int_0^T \exp(-t/\varepsilon)\left(\frac{1}{2}\int_{\Omega}|\de_t v|^2 \,dx + \frac{1}{\varepsilon}\FenR(v(t)))\right)\,dt.
\]
Consider the following space of functions:
\[
\mathcal{K} = \left\{v \in \lwbv : \de_t v \in L^2(\Omega_T)\right\}
\]
with the norm
\[
\|v\|_{\mathcal{K}} = \int_0^T \|v(t)\|_{\bvr(\Omega)}\,dt + \|\de_t v\|_{L^2(\Omega_T)}
\]
By $\mathcal{K}_0$ we denote the subset of those $v \in \mathcal{K}$ such that $v(0) = u_0$ in the $L^2$ sense. We note that 
\[
\mathcal{F}_{\varepsilon}(v) < +\infty \quad \mbox{ for } \quad v \in \mathcal{K}_0
\]
because $v \in L^2(\Omega_T)$. We also have the following $L^1$ bound for functions in $\mathcal{K}_0$
\begin{align*}
    \|v(t)\|_{L^1(\Omega)} &\leq \|v(t)-u_0\|_{L^1(\Omega)}+\|u_0\|_{L^1(\Omega)}\\
    &\leq \int_\Omega\left|\int_0^t\de_\tau v(\tau)\,d\tau\right|\,dx+\|\|_{L^1(\Omega)}\\
    &\leq |\Omega_T|^{\frac{1}{2}}\|\de_t v\|_{L^2(\Omega_T)} + \|u_0\|_{L^1(\Omega)}
\end{align*}
which after integrating with respect to time implies that
\[
\|v\|_{L^1(\Omega_T)} \leq T\left(|\Omega_T|^{\frac{1}{2}}\|\de_t v\|_{L^2(\Omega_T)} + \|u_0\|_{L^1(\Omega)}\right).
\]
We begin by proving the existence of approximate minimisers.
\begin{lem}\label{lem:!min}
    For any $\varepsilon \in (0,1]$, the functional $\mathcal{F}_{\varepsilon}$ admits a unique minimiser $u_{\varepsilon} \in \mathcal{K}_0$.
\end{lem}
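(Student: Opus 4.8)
The plan is to obtain existence by the direct method of the calculus of variations and uniqueness from strict convexity of $\mathcal{F}_\varepsilon$ on the affine set $\mathcal{K}_0$. Fix $\varepsilon\in(0,1]$; then $e^{-T/\varepsilon}\le e^{-t/\varepsilon}\le 1$ on $[0,T]$, so the weight is a harmless bounded factor throughout. The infimum $m:=\inf_{\mathcal{K}_0}\mathcal{F}_\varepsilon$ is finite, because the time-independent extension $v(x,t)=u_0(x)$ lies in $\mathcal{K}_0$ (it has $\de_t v=0\in L^2(\Omega_T)$, satisfies $v(0)=u_0$, and $\int_0^T\tvr(u_0)\,dt<\infty$ by \cref{finiteenergydeblur}) and gives $\mathcal{F}_\varepsilon(u_0)=\FenR(u_0)(1-e^{-T/\varepsilon})<\infty$. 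Taking a minimising sequence $v_n\in\mathcal{K}_0$ with $\mathcal{F}_\varepsilon(v_n)\to m$, the lower bound on the weight yields $\|\de_t v_n\|_{L^2(\Omega_T)}\le C$ and $\int_0^T\tvr(v_n(t))\,dt\le C$; writing $v_n(t)=u_0+\int_0^t\de_\tau v_n\,d\tau$ then bounds $v_n$ in $L^\infty(0,T;L^2(\Omega))$, hence in $L^1(\Omega_T)$ as in the estimate recorded just before the statement.

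The compactness step is the crux. Combining the uniform bound on $\int_0^T\tvr(v_n(t))\,dt$ with the $L^2(\Omega_T)$-bound on $\de_t v_n$, I would run an Aubin--Lions--Simon argument: with the compact embedding $\bvr(\Omega)\hookrightarrow\hookrightarrow L^1(\Omega)$ from \cite{Antil2024}, the continuous embedding $L^2(\Omega)\hookrightarrow L^1(\Omega)$ on the bounded domain, and the resulting $L^1(0,T;L^1(\Omega))$-bound on $\de_t v_n$, the criterion of \cite{Simon1986} gives a subsequence with $v_n\to u_\varepsilon$ strongly in $L^1(\Omega_T)$, hence $v_n(t)\to u_\varepsilon(t)$ in $L^1(\Omega)$ for a.e.\ $t$ after a further extraction. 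Simultaneously $\de_t v_n\rightharpoonup g$ weakly in $L^2(\Omega_T)$; testing against $\phi\in C^\infty_c(\Omega\times(0,T))$ identifies $g=\de_t u_\varepsilon\in L^2(\Omega_T)$. Lower semicontinuity of $\tvr$ under $L^1$-convergence (\cref{lem:closure1}) places $u_\varepsilon$ in $\lwbv$, and the initial condition passes to the limit since $v_n\rightharpoonup u_\varepsilon$ in $H^1(0,T;L^2(\Omega))$ with weakly continuous evaluation at $t=0$; thus $u_\varepsilon\in\mathcal{K}_0$.

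To finish existence I would verify $\mathcal{F}_\varepsilon(u_\varepsilon)\le m$ by lower semicontinuity. The weighted dissipation $\tfrac12\int_0^T e^{-t/\varepsilon}\|\de_t v\|_{L^2(\Omega)}^2\,dt$ is convex and strongly continuous in $\de_t v$, hence weakly l.s.c.\ under $\de_t v_n\rightharpoonup\de_t u_\varepsilon$. For the energy term, the pointwise-in-time $L^1$-convergence gives $K[v_n(t)]\to K[u_\varepsilon(t)]$ in $L^2(\Omega)$ by boundedness of $K$ (so the fidelity part converges), while $\tvr$ is $L^1$-l.s.c.\ by \cite[Proposition 2.6]{Antil2024}; Fatou's lemma with the nonnegative weight then gives
\[
\int_0^T e^{-t/\varepsilon}\tfrac1\varepsilon\FenR(u_\varepsilon(t))\,dt \le \liminf_{n\to\infty}\int_0^T e^{-t/\varepsilon}\tfrac1\varepsilon\FenR(v_n(t))\,dt.
\]
Adding the two inequalities (superadditivity of $\liminf$) yields $\mathcal{F}_\varepsilon(u_\varepsilon)\le\liminf_n\mathcal{F}_\varepsilon(v_n)=m$, and since $u_\varepsilon\in\mathcal{K}_0$ forces $\mathcal{F}_\varepsilon(u_\varepsilon)\ge m$, equality holds and $u_\varepsilon$ is a minimiser.

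For uniqueness, $\mathcal{F}_\varepsilon$ is strictly convex on $\mathcal{K}_0$: $\FenR$ is convex, and if $v_1\ne v_2$ in $\mathcal{K}_0$ then $\de_t v_1\ne\de_t v_2$ on a set of positive measure (otherwise $v_1-v_2$ is constant in time and $v_1(0)=v_2(0)=u_0$ force $v_1\equiv v_2$), so strict convexity of the quadratic dissipation in $\de_t v$ makes the midpoint inequality strict; two distinct minimisers would then produce a point of strictly smaller energy, a contradiction. As flagged, the genuine obstacle is the compactness step, which rests on the compact embedding $\bvr(\Omega)\hookrightarrow\hookrightarrow L^1(\Omega)$ and its parabolic interpolation with the $L^2$ bound on the time derivative.
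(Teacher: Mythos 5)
Your proposal is correct and follows essentially the same route as the paper: the direct method with uniform bounds extracted from the exponential weight, $L^1(\Omega_T)$-compactness of the minimising sequence via the compact embedding $\bvr(\Omega)\hookrightarrow\hookrightarrow L^1(\Omega)$ combined with the $L^2$ bound on the time derivative (the paper packages this Simon-type argument as its own \cref{lem:com}, proved by Steklov averages and Banach-valued Arzel\`a--Ascoli), closure in $\lwbv$ via \cref{lem:closure1}, termwise lower semicontinuity with Fatou's lemma, and uniqueness from strict convexity of the dissipation term on the affine class $\mathcal{K}_0$. The only deviations are cosmetic --- you invoke \cite{Simon1986} directly rather than the paper's lemma, and you recover $u_\varepsilon(0)=u_0$ from weak continuity of the trace on $H^1(0,T;L^2(\Omega))$ where the paper uses a uniform $C^{0,\frac12}([0,T];L^2(\Omega))$ bound --- and your trace argument is, if anything, the more careful of the two.
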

\begin{proof}
    Let $(u_n)_{n\in\mathbb{N}} \subseteq \mathcal{K}_0$ be a minimising sequence. Then without loss of generality we may assume that 
    \[
    \sup_{n\in\mathbb{N}}\mathcal{F}_{\varepsilon}(u_n) \leq \FenR(u_0) + 1.
    \]
    We seek uniform(-in-$n$) $L^1$ bounds to apply our compactness lemma. To this end, let $v \in \mathcal{K}_0$. Then 
    \begin{align*}
        \|v\|_{\mathcal{K}} &\leq \int_0^T \tvr(v(t))\,dt + \left(1+T|\Omega_T|^{\frac{1}{2}}\right)\|\de_t v\|_{L^2(\Omega_T)}+ T\|u_0\|_{L^1(\Omega)}\\
        &\leq \left(1+T|\Omega_T|^{\frac{1}{2}}\right)\left[\int_0^T \tvr(v(t))\,dt + \|\de_t v\|_{L^2(\Omega_T)}\right]+ T\|u_0\|_{L^1(\Omega)}\\
        &\leq 2\exp(T/\varepsilon)\left(1+T|\Omega_T|^{\frac{1}{2}}\right)\mathcal{F}_{\varepsilon}(v) +  T\|u_0\|_{L^1(\Omega)}.
    \end{align*}
    In particular, setting $v = u_n$ we obtain
    \begin{align*}
        \sup_{n \geq 1}\left(\int_0^T \|u_n(t)\|_{\bvr(\Omega)}\,dt+\|\de_t u_n\|_{L^2(\Omega_T)}\right)
        \leq 2\exp(T/\varepsilon)\left(1+T|\Omega_T|^{\frac{1}{2}}\right)(\FenR(u_0)+1) +  T\|u_0\|_{L^1(\Omega)}.
    \end{align*}
    Therefore by \cref{lem:com}, we obtain a subsequence, which we continue to denote by $u_n$, and a function $u:\Omega_T \rightarrow \bb{R}$ such that
   \begin{itemize}
       \item[(a)] $u_n \rightarrow u$ in $L^1(\Omega_T)$,
       \item[(b)] $u_n \rightarrow u$ a.e. in $\Omega_T$,
       \item[(c)] $u_n \rightharpoonup u$ in $L^2(\Omega_T)$-weak. 
       \item[(d)] $\de_t u_n \rightharpoonup \de_t u$ in $L^2(\Omega_T)$-weak.  
   \end{itemize}
    We note that by \cref{lem:closure1}, the limit $u \in \lwbv$. Let $0\leq s < t \leq T$. Then 
    \[
    \|u_n(t)-u_n(s)\|_{L^2(\Omega)} \leq \sqrt{|t-s|}\|\de_tu_n\|_{L^2(\Omega_T)} \leq \exp(T/\varepsilon)(\FenR(u_0)+1)\sqrt{|t-s|}.
    \]
    It follows that $u_n$ is uniformly bounded in $C^{0,\frac{1}{2}}([0,T];L^2(\Omega))$ and so by passing to a further subsequence if necessary we may assume that $u_n$ converges to $u$ in $C^{0}([0,T];L^2(\Omega))$. Thus, $u(0) = u_0$ in $L^2$. 
    Since $K$ is a bounded operator as in \cref{deblurfunc}, we get
    \[
    \lim_{n \rightarrow +\infty} \int_{\Omega}|K(u_n(t))-u_0|^2\,dx = \int_{\Omega}|K(u(t))-u_0|^2\,dx \quad \mbox{ for a.e. } \quad 0<t<T
    \]
    Finally, recalling that $\tvr$ is lower semicontinuous with respect to $L^1$ convergence (\cite[Proposition 2.6]{Antil2024}) we get that
    \[
    \FenR(u(t)) \leq \liminf_{n \rightarrow +\infty}\,\FenR(u_n(t)) \quad \mbox{ for a.e. } \quad 0<t<T.
    \]
    Thus,
    \begin{align*}
        \mathcal{F}_{\varepsilon}(u) &= \int_0^T \exp(-t/\varepsilon)\left(\frac{1}{2}\int_{\Omega}|\de_t u|^2 \,dx + \frac{1}{\varepsilon}\FenR(u(t)))\right)\,dt\\
        &\leq \liminf_{n \rightarrow +\infty}\int_0^T \exp(-t/\varepsilon)\left(\frac{1}{2}\int_{\Omega}|\de_t u_n|^2 \,dx + \frac{1}{\varepsilon}\FenR(u_n(t)))\right)\,dt\\
        &=\liminf_{n \rightarrow +\infty} \mathcal{F}_{\varepsilon}(u).
    \end{align*}
    It follows that $u$ is a minimiser of $\mathcal{F}_{\varepsilon}$ in $\mathcal{K}_0$. It is unique since $\mathcal{F}_{\varepsilon}$ is strictly convex. 
\end{proof}
\subsection{Reformulation of minimality} Fix $\varepsilon$ in $0<\varepsilon \leq 1$ and let $u_{\varepsilon}$ be the unique minimiser of $\mathcal{F}_{\varepsilon}$ in $\mathcal{K}_0$. Let $\Psi \in \lwbv$ with $\de_t \Psi \in L^2(\Omega_T)$ and $\Psi(0) \in L^2(\Omega_T)$. Suppose that $\Upsilon \in W^{1,\infty}(0,T;[0,1])$ and that $\Psi(0)\Upsilon(0) = 0$. For $0<\delta\leq \exp(-T/\varepsilon)$, let
\begin{align*}
v_{\delta}(x,t) = u_{\varepsilon}(x,t) + \delta \exp(t/\varepsilon)\Upsilon(t)\Psi(x,t).
\end{align*}
First, we show that $v_{\delta} \in \mathcal{K}_0$ is a valid comparison map in \cref{def:solr}. It is easy to verify that $v_\delta\in\lwbv$ by arguments similar to those in \cref{localizesub}. Particularly, $v_\delta$ is a convex combination of $u_\varepsilon$ and $u_\varepsilon + \Psi$ on each time slice which along with the convexity of $\tvr$-seminorm lets us conclude that $v_\delta\in\lwbv$. Next, we check the finiteness of $\mathcal{F}_\varepsilon(v_\delta)$ by noting that
\begin{align*}
    \int_0^T\exp\left(-t/\varepsilon\right)\FenR(v_\delta(t))+\,dt &\leq \int_0^T\exp\left(-t/\varepsilon\right)\left[(1-\sigma(t))\FenR(u_\varepsilon(t))+\sigma(t)\FenR((u_\varepsilon+\Psi)(t))\right]\,dt\\
    & \leq \int_0^T\exp\left(-t/\varepsilon\right)\FenR(u_\varepsilon(t))\,dt + \int_0^T\FenR((u_\varepsilon+\Psi)(t))\,dt<\infty,
\end{align*} where $\sigma(t)=\delta\exp(t/\varepsilon)\Upsilon(t)$. That $\de_t v_\delta\in L^2(\Omega_T)$ is a consequence of a similar argument. The initial and boundary conditions are verified through $\Psi(0)\Upsilon(0)=0$.
Finally, by minimality of $u_\varepsilon$, we have
\begin{align*}
    \mathcal{F}_\varepsilon(u_\varepsilon)\leq\mathcal{F}(v_\delta)<\infty.
\end{align*}
This implies that
\begin{align*}
&\int_0^T \exp(-T/\varepsilon)\int_{\Omega}\frac{\delta^2|\de_t(\exp(t/\varepsilon)\Upsilon\Psi)|^2}{2}+\delta\de_tu_{\varepsilon}\de_t(\exp(t/\varepsilon)\Upsilon\Psi) \,dx\,dt \\
&\qquad+ \int_0^T\frac{\delta}{\varepsilon}\Upsilon(t)(\FenR((u_{\varepsilon}+\phi)(t))-\FenR(u_{\varepsilon}(t)))\,dt \\
&\geq \int_0^T \exp(-T/\varepsilon)\int_{\Omega}\frac12 \left(|\de_tu_\varepsilon+\delta\de_t(\exp(t/\varepsilon)\Upsilon\Psi)|^2-|\de_tu_\varepsilon|^2\right) \,dx\,dt \\
&\qquad+ \int_0^T\frac{\exp(-t/\varepsilon)}{\varepsilon}(\FenR((u_{\varepsilon}+\delta\exp(t/\varepsilon)\Upsilon\Psi(t))-\FenR(u_{\varepsilon}(t)))\,dt\\
&\geq 0,
\end{align*} where the first inequality uses convexity just as in the previous displays.
Multiplying throughout by $\varepsilon/\delta$ and then sending $\delta \rightarrow 0+$ we first obtain
\begin{align*}
    0&\leq \int_0^T \exp(-T/\varepsilon)\int_{\Omega}\delta\de_tu_{\varepsilon}\de_t(\exp(t/\varepsilon)\Upsilon\Psi) \,dx\,dt \\
    &\qquad+ \int_0^T\Upsilon(t)(\FenR((u_{\varepsilon}+\Psi)(t))-\FenR(u_{\varepsilon}(t)))\,dt\\    &=\int_0^T\Upsilon(t)\left[\de_tu_{\varepsilon}\Psi+(\FenR((u_{\varepsilon}+\Psi)(t))-\FenR(u_{\varepsilon}(t)))\right]\,dt\\
    &\qquad+\varepsilon\int_0^T\int_\Omega\left[\Upsilon'\de_t u_\varepsilon\Psi + \Upsilon\de_tu_\varepsilon\de_t\Psi\right]\,dx\,dt
\end{align*} which can be rewritten to
\begin{gather}\label{eq:reformulation}
\begin{aligned}
    \int_0^T \Upsilon(t)\FenR(u_{\varepsilon}(t))\,dt &\leq \int_0^T \Upsilon(t)\FenR((u_{\varepsilon}+\Psi)(t))\,dt\\
    &\quad+ \int_0^T\int_{\Omega}\Upsilon\de_tu_{\varepsilon}\Psi \,dx\,dt\\
    &\quad\quad+\varepsilon\int_0^T\int_{\Omega}\left(\Upsilon'\de_tu_{\varepsilon}\Psi + \Upsilon\de_tu_{\varepsilon}\de_t{\Psi}\right)\,dx\,dt.
    \end{aligned}
\end{gather}
This is a new reformulation of the minimality condition for the approximate variational solutions and it is valid for all $\Upsilon\in W^{1,\infty}(0,T;[0,1])$ and $\Psi\in \lwbv$ with $\de_t\Psi\in L^2(\Omega_T)$ satisfying $\Upsilon(0)\Psi(0)=0$ in the sense of $L^2(\Omega)$.

\subsection{Energy bounds} In order to pass to the limit in the minimality condition as $\varepsilon\to 0$, we need energy bounds that are uniform in $\varepsilon$. These will be proved in this section. We choose $\Psi = -h\de_t[u_{\varepsilon}]_h$ in \eqref{eq:reformulation} to get
\begin{align*}
h\int_0^T\int_{\Omega}(\Upsilon+\varepsilon\Upsilon')\de_tu_{\varepsilon}\de_t[u_{\varepsilon}]_h + \varepsilon\Upsilon\de_tu_{\varepsilon}\de_{tt}[u_{\varepsilon}]_h \,dx\,dt &\leq \int_0^T\Upsilon(t)\left[\FenR([u_\varepsilon]_h(t))-\FenR(u_\varepsilon(t))\right]\,dt\\
&\leq \int_0^T \Upsilon(t)\left[[\FenR(u_\varepsilon(t))]_h-\FenR(u_\varepsilon(t))\right]\,dt\\
&\stackrel{\cref{ODEmolli}}{=}-h\int_0^T\Upsilon(t)\de_t[\FenR(u_{\varepsilon}(t))]_h\,dt.
\end{align*} We also make use of \cref{convfuncR}. 
Dividing the previous inequality by $h$ and noting that
\begin{align*}
\de_tu_{\varepsilon}\de_{tt}[u_{\varepsilon}]_h &=\de_t[u_\varepsilon]_h\de_{tt}[u_\varepsilon]_h + (\de_tu_\varepsilon-\de_t[u_\varepsilon]_h)\de_{tt}[u_\varepsilon]_h\\
&=\frac12\de_t|\de_t[u_\varepsilon]_h|^2+\frac1h|\de_t[u_\varepsilon]_h-\de_tu_\varepsilon|^2\\
&\geq \frac{1}{2}\de_t|\de_t[u_{\varepsilon}]_h|^2
\end{align*}
we get
\begin{equation}\label{eq:energy-min}
  \int_0^T\int_{\Omega}(\Upsilon+\varepsilon\Upsilon')\de_tu_{\varepsilon}\de_t[u_{\varepsilon}]_h + \frac{\varepsilon}{2}\de_t|\de_t[u_{\varepsilon}]_h|^2 \,dx\,dt \leq -h\int_0^T\Upsilon(t)\de_t[\FenR(u_{\varepsilon}(t))]_h\,dt.
\end{equation}
We will use this inequality to obtain the required bounds.
\subsubsection{Uniform $L^2$ and $C^{0,\tfrac{1}{2}}$ bounds} We put $\Upsilon = 1$ in \eqref{eq:energy-min} to obtain
\begin{align*}
    \int_0^T\int_\Omega\de_tu_\varepsilon\de_t[u_\varepsilon]_h\,dx\,dt \leq -\int_0^T \de_t[\FenR(u_\varepsilon(t))]_h\,dt - \frac12\varepsilon\int_0^T\int_\Omega\de_t|\de_t[u_\varepsilon]_h|^2\,dx\,dt\leq \FenR(u_0),
\end{align*}
where we make use of $[\FenR(u_\varepsilon)]_h(0)=\FenR(u_0)$, $\de_t[u_\varepsilon]_h(0)=0$, $[\FenR(u_\varepsilon)]_h(T)\geq 0$, and $|\de_t[u_\varepsilon]_h|^2(T)\geq 0$. We send $h \rightarrow 0+$ to get the uniform bound:
\[
\int_0^T \int_{\Omega} |\de_t u_{\varepsilon}|^2 \,dx\,dt \leq \FenR(u_0)
\]
which implies the following estimates:
\[
\|u_{\varepsilon}\|^2_{L^2(\Omega_T)}\leq 4T^2\|\de_tu_\varepsilon\|^2_{L^2(\Omega_T)}+2T\|u_0\|^2_{L^2(\Omega)} \leq 4T^2\FenR(u_0) + 2T\|u_0\|_{L^2(\Omega)}^2
\]
and
\begin{equation}\label{eq:uniformC0}
    \|u_{\varepsilon}(t)-u_{\varepsilon}(s)\|_{L^2(\Omega)}\leq \|\de_tu_\varepsilon\|_{L^2(\Omega_T)}\sqrt{|t-s|} \leq \sqrt{\FenR(u_0)}\sqrt{t-s}
\end{equation}
for any $0 \leq s < t \leq T$. It follows that the sequence of minimisers $\{u_{\varepsilon}\}$ is uniformly bounded in $L^2(\Omega_T)$ and $C^{0,\tfrac{1}{2}}([0,T];L^2(\Omega))$.

\subsubsection{Uniform $\tvr$ bounds} Next, for $0 \leq t<s<T$ we put
\begin{align*}
\Upsilon(t) = \begin{cases}
    1&\mbox{ for }t\in[0,t_1],\\
    \frac{t_2-t}{t_2-t_1}&\mbox{ for }t\in(t_1,t_2),\\
    0&\mbox{ for }t\in[t_2,T]
\end{cases}
\end{align*}
in \eqref{eq:energy-min} and integrate by parts to get
\begin{align*}
    \int_0^T\int_{\Omega} \Upsilon(t)\de_tu_{\varepsilon}\de_t[u_{\varepsilon}]_h\,dx\,dt &\leq \FenR(u_0) + \int_0^T \Upsilon'(t)|\de_tu_{\varepsilon}|^2[\FenR(u_{\varepsilon}(t))]_h\,dt\\
    &+\int_0^T \Upsilon'(t)\frac{\varepsilon|\de_t[u_{\varepsilon}]_h|^2}{2} - \varepsilon\de_tu_{\varepsilon}\de_t[u_{\varepsilon}]_h \,dx\,dt.
    \end{align*}
We send $h \rightarrow 0+$ and find that
\[
  \int_0^T\int_{\Omega} \Upsilon(t)\de_t|u_{\varepsilon}|^2\,dx\,dt \leq \FenR(u_0) + \int_0^T \Upsilon'(t)|\de_tu_{\varepsilon}|^2\FenR(u_{\varepsilon}(t))\,dt\\
    -\int_0^T \Upsilon'(t)\frac{\varepsilon|\de_t u_{\varepsilon}|^2}{2}.
\]
is true. It follows that
\begin{equation}\label{eq:uniform-F}
    \int_{t_1}^{t_2}\FenR(u_{\varepsilon}(t))\,dt\leq (t_2-t_1)\FenR(u_0)+\frac{\varepsilon}{2}\int_{t_1}^{t_2}\int_\Omega |\de_tu_\varepsilon|^2\,dx\,dt \leq (t_2-t_1+\varepsilon/2)\FenR(u_0).
\end{equation}
In particular,
\begin{align*}
\int_{t_1}^{t_2} \tvr(u_{\varepsilon}(t)) \,dt \leq (t_2-t_1+\varepsilon/2)\FenR(u_0).
\end{align*}
\subsection{Limits} Using the previous uniform bounds we can invoke \cref{lem:com} to find a function $u: \Omega_T \rightarrow \bbr$ such that upto a subsequence, still denoted by $u_{\varepsilon}$, we have 
\begin{itemize}
    \item[(a)] $u_{\varepsilon} \rightarrow u  \mbox{ strongly in }  L^1(\Omega_T)$
    \item[(b)] $u_{\varepsilon} \rightarrow u  \mbox{ a.e. on }  \Omega_T$
    \item[(c)] $u_{\varepsilon} \rightharpoonup u \mbox{ and } \de_t u_{\varepsilon} \rightharpoonup \de_tu \mbox{ in }  L^2(\Omega_T)$-weak.
\end{itemize}
Not that the uniform $\tvr$ bound and the strong $L^1$ convergence imply that $u \in \lwbv$ by \cref{lem:closure1}. Since $\de_t u_{\varepsilon} \rightharpoonup \de_tu$ in $L^2$ we have
\[
\int_0^T \int_{\Omega} |\de_t u|^2 \,dx\,dt \leq \liminf_{\varepsilon \rightarrow 0+} \int_0^T\int_{\Omega}|\de_t u_{\varepsilon}|^2 \,dx\,dt \leq \FenR(u_0). 
\]
Since $\FenR$ is lower semi-continuous on each time slice - for the $\tvr$ part we again appeal to lower semicontinuity with respect to $L^1$ convergence and for the deblurring part we appeal to the weak convergence in $L^2$ again as well as Fatou's lemma, we get 
\[
\int_{t_1}^{t_2} \FenR(u(t)) \,dt \leq \int_{t_1}^{t_2} \liminf_{\varepsilon \rightarrow 0+} \FenR(u_{\varepsilon}(t)) \,dt \leq \liminf_{\varepsilon \rightarrow 0+} \int_{t_1}^{t_2} \FenR(u_{\varepsilon}(t)) \,dt \leq (t_2-t_1)\FenR(u_0) < +\infty. 
\]
Setting $t_2=T$ and $t_1=0$ we get
\[
0 \leq \int_0^T \FenR(u(t)) \,dt < + \infty.
\]
This makes the LHS of \cref{eq:sol} finite. Again as in \cref{lem:!min}, we also get that $u(0) = u_0$ in $L^2$. We will now show that $u$ is a variational solution. To this end, consider $v \in \lwbv$ with $\de_t v \in L^2(\Omega_T)$ , $v(0) \in L^2(\Omega)$ and 
\[
\int_0^T \FenR(v(t)) \,dt < +\infty.
\]  We will again use \eqref{eq:reformulation} with 
\begin{align}
\Upsilon(t) = \begin{cases}
    \frac{t}{\theta}&\mbox{ for }t\in[0,\theta),\\
    1&\mbox{ for }t\in[\theta,T-\theta],\\
    \frac{T-t}{\theta}&\mbox{ in }t\in(T-\theta,t].
\end{cases}
\end{align}
where $\theta \in (0,T/2)$. For $\Psi$, we fix an $\varepsilon \in (0,1]$ and set
\[
\Psi = v-u_{\varepsilon}.
\]
These functions satisfy the requirement for the reformulated minimality condition \cref{eq:reformulation}. Doing so we obtain
\begin{align*}
    \int_0^T \FenR(u_{\varepsilon}(t)))\,dt \leq& \int_0^T (1-\Upsilon(t))\FenR(u_{\varepsilon}(t))\,dt + \int_0^T\int_{\Omega}\Upsilon\de_tu_{\varepsilon}(v-u_{\varepsilon})\,dx\,dt\\
    &+\int_0^T\Upsilon(t)\FenR(v(t)))\,dt + \varepsilon\int_0^T\int_{\Omega}\left(\Upsilon'\de_tu_{\varepsilon}(v-u_{\varepsilon}) + \Upsilon\de_tu_{\varepsilon}\de_t{(v-u_{\varepsilon})}\right)\,dx\,dt.\\
    &= \mathfrak{A} +\mathfrak{B} + \mathfrak{C} + \mathfrak{D}.
\end{align*}
For $\theta \geq \varepsilon$ we use \eqref{eq:uniform-F} to get
\[
\mathfrak{A}\leq \int_0^\theta \FenR(u_{\varepsilon}(t))\,dt+\int_{T-\theta}^T \FenR(u_{\varepsilon}(t))\,dt \leq (2\theta + \varepsilon)\FenR(u_0) \leq 3\theta \FenR(u_0).
\]
Next we write:
\[
\mathfrak{B} = \int_0^T\int_{\Omega}\Upsilon\de_tv(v-u_{\varepsilon})\,dx\,dt - \frac{1}{2}\int_0^T\int_{\Omega}\Upsilon\de_t|v-u_{\varepsilon}|^2\,dx\,dt.
\]
Integrating by parts while noting that $\Upsilon(T)=\Upsilon(0)=0$, we get
\[
 - \frac{1}{2}\int_0^T\int_{\Omega}\Upsilon\de_t|v-u_{\varepsilon}|^2\,dx\,dt =\frac{1}{2}\int_0^T\int_{\Omega}\Upsilon'|v-u_{\varepsilon}|^2\,dx\,dt= \frac{1}{2\theta}\int_0^{\theta}\int_{\Omega}|v-u_{\varepsilon}|^2\,dx\,dt - \frac{1}{2\theta}\int_{T-\theta}^T|v-u_{\varepsilon}|^2\,dx\,dt. 
\]
On the other hand using Minkowski inequality along with \eqref{eq:uniformC0} we get
\begin{align*}
    \frac{1}{2\theta}\int_{\Omega}|v-u_{\varepsilon}|^2 \,dx\,dt&\leq \left\{\left(\frac{1}{2\theta}\int_{\Omega}|v-u_{0}|^2 \,dx\,dt+\right)^{\frac{1}{2}}+\left(\frac{1}{2\theta}\int_{\Omega}|u_\varepsilon-u_{0}|^2 \,dx\,dt\right)^{\frac12}\right\}^{2}\\
    &\leq \left\{\left(\frac{1}{2\theta}\int_{\Omega}|v-u_{0}|^2 \,dx\,dt\right)^{\frac{1}{2}}+\left(\frac{\theta}{2}\FenR(u_0)\right)^{\frac{1}{2}}\right\}^{2}.
\end{align*}
Thus invoking weak convergence in $L^2$ we send $\varepsilon \rightarrow 0+$ to get
\begin{align*}
\liminf_{\varepsilon \rightarrow 0+} \mbox{ } \mathfrak{B} \leq& \int_0^T\int_{\Omega}\Upsilon\de_tv(v-u)\,dx\,dt  - \frac{1}{2\theta}\int_{T-\theta}^T|v-u|^2\,dx\,dt \\
&+ \left\{\left(\frac{1}{2\theta}\int_{\Omega}|v-u_{0}|^2 \,dx\,dt\right)^{\frac{1}{2}}+\left(\frac{\theta}{2}\FenR(u_0)\right)^{\frac{1}{2}}\right\}^{2}.
\end{align*}
Finally
\[
\mathfrak{D} \rightarrow 0 \quad \mbox{ as } \quad \varepsilon \rightarrow 0+
\]
due to the uniform $L^2$ bounds on $u_{\varepsilon}$ and $\de_t u_{\varepsilon}$. We now invoke lower semicontinuity of $\FenR$ once again to get
\begin{align*}
   \int_0^T \FenR(u(t))\,dt \leq& \int_0^T \Upsilon(t)\int_{\Omega} \de_t v (v-u)\,dx\,dt + \int_0^T \Upsilon(t)\FenR(v(t))\,dt + 3\theta \FenR(u_0) \\
   &+   \left\{\left(\frac{1}{2\theta}\int_{\Omega}|v-u_{0}|^2 \,dx\,dt\right)^{\frac{1}{2}}+\left(\frac{\theta}{2}\FenR(u_0)\right)^{\frac{1}{2}}\right\}^{2} - \frac{1}{2\theta}\int_{T-\theta}^T|v-u|^2\,dx\,dt . 
\end{align*}
Sending $\theta \rightarrow 0+$ we get
\begin{align*}
      \int_0^T \FenR(u(t))\,dt \leq& \int_0^T\int_{\Omega} \de_t v (v-u)\,dx\,dt + \int_0^T \FenR(v(t))\,dt\\
   &+\frac{1}{2}\|v(0)-u_0\|_{L^2(\Omega)}^2 -\frac{1}{2}\|(v-u)(T)\|_{L^2(\Omega)}^2.
\end{align*}
It follows that $u$ is a variational solution. This completes the proof of \cref{existence1}.

\begin{rem}
    In this remark, we shall note the modifications that are required in order to prove \cref{existence2}. 
    
    Firstly, we define the functional $\mathcal{F}_\varepsilon$ space $\mathcal{K}_0$ in the same way. However, instead of seeking solutions in the class $\mathcal{K}_{0}$, we need the further class
    \begin{align*}
        \mathcal{K}_{\varepsilon,0}:=\{u\in\mathcal{K}_0:\mathcal{F}_\varepsilon(u)<\infty\}.
    \end{align*} Unlike the case of $\tvr$ deblurring flow, the finiteness of the $\mathcal{F}_\varepsilon$ functional is not automatic. Hence, at each step when a comparison map is introduced, one additonally needs to verify that it has finite $\mathcal{F}_\varepsilon$ value. 

    In the proof of \cref{lem:!min}, we also need to verify the boundary condition for the minimiser $u$ which follows from the almost everywhere $(x,t)\in\bbr\times(0,T)$ convergence of the sequence $u_n$. The same argument also allows for the limit of $u_\varepsilon$ as $\varepsilon\to 0$ to satisfy the boundary condition. 
    
    In the formulation of the minimality condition \cref{eq:reformulation}, one additionaly assumes that $\int_0^T\FenR((u+\Psi)(t))\,dt<\infty$ in order for the function $v_\delta$ to be an admissible test function and for it to have finite $\mathcal{F}_\varepsilon$ value. 
    
    In the part of the proof where one proves the uniform energy bounds, for the choice of $\Psi=-h\de_t[u_\varepsilon]_h$, we need to prove that $\int_0^T\FenG((u+\Psi)(t))\,dt<\infty$. We first note that $[u_\varepsilon]_h\in \lwbvGz$ by \cref{convtvGfunc}. We also have $[u_\varepsilon]_h(0)=u_0$ and therefore $\de_t[u_\varepsilon]_h(0)=\frac1h(u_0-[u_\varepsilon]_h(0))=0$ and from \cref{convfuncG} we have
    \begin{align*}
        \int_0^T\Upsilon(t)\FenG((u_\varepsilon-h\de_t[u_\varepsilon]_h)(t))\,dt= \int_0^T\Upsilon(t)\FenG([u_\varepsilon]_h(t))\,dt<\infty
    \end{align*} for any $\Upsilon\in W^{1,\infty}(0,T;[0,1])$. This makes $\Psi=-h\de_t[u_\varepsilon]_h$ admissible in \cref{eq:reformulation}.  

    Apart from these, we replace the occurrences of $\FenR$ with $\FenG$ and those of $\tvr$ with $\tvG$, those of $\lwbv$ with $\lwbvGz$ etc. Finally, we invoke the relevant compactness lemma which is \cref{lem:com2}.
\end{rem}

\section{Variational solutions are parabolic minimisers}
Another variational notion of solutions to parabolic equations appears in the work of Wieser \cite{Wieser1987} in the form of parabolic minimisers.
\begin{defn}\label{parmindef}
    A measurable function $u:\Omega_T\to\bbR$ is a parabolic minimiser to the $\tvr$ deblurring flow if one has
    \begin{align*}
        u\in \lwbv\mbox{ and }\int_0^T\tvr(u(t))\,dt<\infty,
    \end{align*} and the following minimality condition holds
    \begin{align}\label{minimacond}
        \int_0^T\left(\int_\Omega u\cdot\de_t\varphi\,dx+\tvr(u(t))\right)\,dt\leq \int_0^T\tvr((u+\varphi)(t))\,dt,
    \end{align} for all $\varphi\in C_c^\infty(\Omega_T).$
\end{defn}

\begin{prop}
    Let $u$ be a variational solution to the $\tvr$ deblurring flow in the sense of \cref{def:solr}. Then, $u$ is a parabolic minimiser to the $\tvr$ deblurring flow in the sense of \cref{parmindef}. 
\end{prop}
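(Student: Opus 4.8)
The plan is to test the defining variational inequality \eqref{eq:sol} with the single comparison map $v = u + \varphi$, where $\varphi \in C_c^\infty(\Omega_T)$ is the arbitrary test function of \cref{parmindef}, and then to integrate by parts in time so as to read off the minimality condition. The whole proof is essentially this one substitution, so the work lies in checking that $v$ is admissible and in handling the temporal boundary contributions.

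First I would verify that $v = u + \varphi$ is an admissible comparison map for \cref{def:solr}. Since $u \in \lwbv$ and $\varphi$ is smooth with compact support, sub-additivity of the $\tvr$-seminorm gives $v \in \lwbv$; moreover $v(0) = u(0) + \varphi(\cdot,0) = u_0 \in L^2(\Omega)$ because $\varphi$ vanishes near $t=0$. The only point needing input from the theory is the requirement $\de_t v \in L^2(\Omega_T)$: here I would invoke the a priori estimate \cref{timeregularity1}(ii), which guarantees $\de_t u \in L^2(\Omega_T)$, so that $\de_t v = \de_t u + \de_t\varphi \in L^2(\Omega_T)$.

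Then I would substitute into \eqref{eq:sol}. Because $\varphi \in C_c^\infty(\Omega_T)$ vanishes for $t$ near $0$ and near $T$, both $L^2$ boundary terms disappear: $(v-u)(T) = \varphi(\cdot,T) = 0$ and $v(0) - u_0 = \varphi(\cdot,0) = 0$. The parabolic pairing becomes $\int_0^T\int_\Omega \de_t v\,(v-u)\,dx\,dt = \int_0^T\int_\Omega \de_t(u+\varphi)\,\varphi\,dx\,dt$, which I split into $\int_0^T\int_\Omega \de_t u\,\varphi\,dx\,dt$ and $\int_0^T\int_\Omega \varphi\,\de_t\varphi\,dx\,dt = \tfrac12\int_\Omega [\varphi^2]_{t=0}^{t=T}\,dx = 0$. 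Integrating the surviving term by parts in time turns $\int_0^T\int_\Omega \de_t u\,\varphi$ into $-\int_0^T\int_\Omega u\,\de_t\varphi$, the temporal endpoint contributions again vanishing because $\varphi(\cdot,0) = \varphi(\cdot,T) = 0$. Feeding this back and rearranging yields
\begin{align*}
\int_0^T\int_\Omega u\,\de_t\varphi\,dx\,dt + \int_0^T \FenR(u(t))\,dt \leq \int_0^T \FenR((u+\varphi)(t))\,dt,
\end{align*}
which is exactly the parabolic-minimiser inequality of \cref{parmindef}, the governing energy $\FenR$ appearing on both sides.

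The computation itself is routine; the one genuine subtlety, and hence the step I expect to be the main obstacle, is the justification of the integration by parts in time. This is legitimate precisely because the variational solution carries the extra regularity $\de_t u \in L^2(\Omega_T)$ supplied by \cref{timeregularity1}, which makes $t \mapsto \int_\Omega u\,\varphi\,dx$ absolutely continuous with the expected derivative. Without this a priori time-regularity the pairing $\int_0^T\int_\Omega \de_t u\,\varphi$ would not even be well defined and the argument would collapse, so I would cite the time-regularity theorem explicitly at that point rather than treat the integration by parts as a formality.
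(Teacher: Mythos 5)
Your proof is correct and shares the skeleton of the paper's argument --- test \cref{eq:sol} with a compactly supported perturbation of $u$, invoke \cref{timeregularity1} to get $\de_t u\in L^2(\Omega_T)$ so that the comparison map is admissible and the integration by parts in time is justified, and let the compact support of $\varphi$ kill the temporal boundary terms --- but it differs in one genuine way: the paper perturbs by $v=u+s\varphi$ with $s>0$, uses convexity of the functional to split $\FenR((u+s\varphi)(t))\leq(1-s)\FenR(u(t))+s\FenR((u+\varphi)(t))$, divides by $s$, and sends $s\to0$, whereas you take $s=1$ outright. Your shortcut is legitimate because the quadratic term $\int_0^T\int_\Omega\varphi\,\de_t\varphi\,dx\,dt=\tfrac12\int_\Omega\big[\varphi^2\big]_{t=0}^{t=T}\,dx$ vanishes exactly, so neither convexity nor a limiting argument is needed; this is more elementary than the paper's route. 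Two remarks. First, your claim $v(0)=u_0$ silently uses $u(0)=u_0$, which is not part of \cref{def:solr} but is supplied by the paper's lemma on attainment of the initial condition in the $L^2$-sense; since the term $+\tfrac12\|v(0)-u_0\|_{L^2(\Omega)}^2$ sits on the right-hand side with a positive sign, you genuinely need it to vanish, so you should cite that lemma rather than treat the equality as automatic. Second, your conclusion carries the full energy $\FenR$ on both sides, which does \emph{not} literally match \cref{parmindef}: the definition and \cref{minimacond} are written with $\tvr$ alone. Here your version is the defensible one --- the pure $\tvr$ inequality does not follow from \cref{eq:sol}, because even under the paper's $s\to0$ scaling the fidelity term survives through its first variation $\kappa\int_0^T\int_\Omega(K[u]-u_0)\,K[\varphi]\,dx\,dt$, which need not vanish. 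The inequality with the full functional is also the form announced in the abstract, so the appearance of $\tvr$ in \cref{parmindef}, and in the paper's own proof where \cref{eq:sol} is quoted with $\tvr$ in place of $\FenR$, is evidently shorthand (or a slip) for $\FenR$; read literally, the paper's proof commits exactly the unjustified replacement that your direct substitution avoids.
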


\begin{proof}
    By \cref{timeregularity1}, we have $\de_t u\in L^2(\Omega)$. In \cref{eq:sol}, we take $v=u+s\varphi$ for $\varphi\in C_c^\infty(\Omega_T)$ and $s>0$. It is easy to check that is an admissible map. We also have $v(0)=u_0$ and $v(T)=u(T)$ due to the compact support of $\varphi$. As a result, \cref{eq:sol} becomes
    \begin{align*}
        \int_0^T\tvr(u(t))\,dt\leq \int_0^T\left(\int_\Omega \de_t(u+s\varphi)s\varphi\,dx + \tvr((u+s\varphi)(t))\right)\,dt
    \end{align*}
    We integrate by parts in the first term on the RHS. We do not get any terms on the boundary due to the compact support of $\varphi$. We also make use of the convexity of $\tvr$ to obtain
    \begin{align*}
        \int_0^t\left(\int_\Omega s\de_t\varphi(u+s\varphi)\,dx+\tvr(u(t))\right)\,dt\leq \int_0^T(1-s)\tvr(u(t)) + s\tvr((u+\varphi)(t))\,dt.
    \end{align*} We move the first term on the RHS to the left and divide by $s$ to get
    \begin{align*}
        \int_0^t\left(\int_\Omega \de_t\varphi(u+s\varphi)\,dx+\tvr(u(t))\right)\,dt\leq \tvr((u+\varphi)(t))\,dt.
    \end{align*} As soon as we pass to the limit in this inequality as $s\to 0$, we obtain \cref{minimacond}.
\end{proof}
\begin{rem}
    A similar result holds for the $\tvG$ denoising flow. In particular, we have 
    \begin{align}\label{minimacond2}
        \int_0^T\left(\int_\Omega u\cdot\de_t\varphi\,dx+\tvG(u(t))\right)\,dt\leq \int_0^T\tvG((u+\varphi)(t))\,dt,
    \end{align} for $u$ as in \cref{def:solg}. Since $\tvG(u) = [u]_{W^{\alpha,1}}$, we have
    \begin{align}\label{minimacond3}
        \int_0^T\left(\int_\Omega u\cdot\de_t\varphi\,dx+\int_\bbr\int_\bbr\frac{|u(x,t)-u(y,t)|}{|x-y|^{N+\alpha}}\,dy\,dx\right)\,dt\leq \int_0^T\int_\bbr\int_\bbr\frac{|(u+\varphi)(x,t)-(u+\varphi)(y,t)|}{|x-y|^{N+\alpha}}\,dt.
    \end{align} This furnishes another proof of the existence of solutions for the parabolic fractional $1$-Laplacian. See \cite{Dingding24} for another method in this direction. Our focus here is to describe fractional $\textbf{BV}$ image restoration methods.
\end{rem}

\section{Compactness lemmas} We collect here the compactness lemmas pertaining to the various stationary and evolutionary nonlocal BV spaces that are used in the article.

\subsection{Compactness in \texorpdfstring{$\bvr$}{BVR}} We begin by stating a compactness result for $\bvr$ whose proof may be found in \cite[Proposition 2.8]{Antil2024}.

\begin{lem}
    \label{lem:com00}
    Let $\Omega$ be an open and bounded set in $\bbr$. Assume $(f_k)_{k=1}^\infty\subset \bvr(\Omega)$ such that
    \begin{align*}
       \sup_{k\geq 1} \left\{\|f_k\|_{L^1(\Omega)} + \tvr(f_k,\Omega)\right\} < \infty.
    \end{align*} Then there exists $f\in \bvr(\Omega)$ such that 
    \begin{align*}
        \tvr(f;\Omega)\leq \liminf_{k\geq 1}\, \tvr(f_k;\Omega).
    \end{align*} Moreover, there is a subsequence $\{f_{k_i}\}_{i=1}^\infty$ such that $\|f_{k_i}-f\|_{L^p(\Omega)}\to 0$ as $i\to\infty$.
\end{lem}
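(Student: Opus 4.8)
The plan is to regard every $f_k$ as a function on all of $\bbr$ that vanishes outside the fixed bounded set $\Omega$ — this is already built into the definition of $\bvr(\Omega)$ — so that the problem becomes one of compactness for a family that is uniformly bounded in $\bvr$ and uniformly supported in $\overline{\Omega}$. Working on $\bbr$ with compactly supported competitors is what lets us avoid any regularity assumption on $\partial\Omega$. The first step is to upgrade the hypothesised bound. Using the fractional Gagliardo–Nirenberg–Sobolev inequality for the Riesz gradient (the analogue of $\|g\|_{L^{N/(N-1)}}\lesssim |Dg|(\bbr)$ for classical $BV$, available from the theory of Riesz fractional gradients cited in \cref{sec:riesz}), there is $C=C(N,\alpha)$ with
\[
\|g\|_{L^{N/(N-\alpha)}(\bbr)}\le C\,\tvr(g)\qquad\text{for all }g\in\bvr(\bbr).
\]
Applied to the $f_k$, this turns the $L^1$-plus-total-variation bound into a uniform bound in $L^{N/(N-\alpha)}(\bbr)$, and in particular gives uniform $p$-integrability of $\{f_k\}$ for every $p\in[1,N/(N-\alpha))$.

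The second step is to prove precompactness in $L^1(\bbr)$ via the Riesz–Fréchet–Kolmogorov criterion. Tightness is immediate because every $f_k$ is supported in the single bounded set $\overline{\Omega}$, and the uniform $p$-integrability from the first step supplies the equi-integrability requirement. The remaining ingredient, equicontinuity of translations, is where the work lies: one needs a modulus estimate of the form
\[
\|\tau_h f_k-f_k\|_{L^1(\bbr)}\le \omega(|h|)\,\tvr(f_k),\qquad \tau_h g:=g(\cdot+h),
\]
with $\omega(|h|)\to 0$ as $|h|\to0$. This is the fractional counterpart of the clean integer-order bound $\|\tau_h g-g\|_{L^1}\le|h|\,|Dg|(\bbr)$, and I expect it to be the \emph{main obstacle}: the distributional fractional gradient $\divr f_k$ is only a finite vector measure of uniformly bounded mass, and one must convert this into Hölder-type (order-$\alpha$) control of difference quotients rather than the Lipschitz-type control available when the gradient is an $L^1$ function or a measure of a first-order operator. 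Granting the translation estimate, Fréchet–Kolmogorov yields a subsequence converging in $L^p(\bbr)$ for each $p\in[1,N/(N-\alpha))$, and a diagonal argument makes the convergence simultaneous across this range.

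Passing to a further subsequence $\{f_{k_i}\}$ we may assume $f_{k_i}\to f$ almost everywhere as well as in each such $L^p$. The a.e. limit inherits $f=0$ outside $\Omega$ and $f\in L^1(\Omega)$, and restricting to $\Omega$ gives the asserted convergence $\|f_{k_i}-f\|_{L^p(\Omega)}\to0$ for every $p\in[1,N/(N-\alpha))$.

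Finally, the lower semicontinuity inequality comes directly from the dual definition of $\tvr$, so no separate semicontinuity result is needed. Fix any competitor $\Psi\in C_c^1(\bbr;\bbr)$ with $|\Psi|_\infty\le1$. Since $\divr\cdot\Psi=\mathrm{div}\,I_{1-\alpha}\Psi$ is smooth and bounded on $\bbr$, and $f_{k_i}\to f$ in $L^1(\bbr)$, we may pass to the limit in the pairing and use $\int_{\bbr}f_{k_i}\,\divr\cdot\Psi\,dx\le\tvr(f_{k_i})$ to obtain
\[
\int_{\bbr} f\,\divr\cdot\Psi\,dx=\lim_{i\to\infty}\int_{\bbr} f_{k_i}\,\divr\cdot\Psi\,dx\le\liminf_{i\to\infty}\tvr(f_{k_i}).
\]
Taking the supremum over all admissible $\Psi$ gives $\tvr(f)\le\liminf_{i\to\infty}\tvr(f_{k_i})\le\liminf_{k\ge1}\tvr(f_k)$. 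In particular $\tvr(f)<\infty$, so $f\in\bvr(\Omega)$, which completes the proof.
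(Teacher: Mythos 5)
The paper does not actually prove this lemma: it is quoted verbatim from the literature, with the proof delegated to \cite[Proposition 2.8]{Antil2024} (which in turn rests on the Comi--Stefani theory of the fractional variation). So the relevant question is whether your blind reconstruction is self-contained, and it is not: you have correctly identified the one non-routine ingredient --- the translation-continuity estimate
\[
\|\tau_h f_k-f_k\|_{L^1(\bbr)}\le \omega(|h|)\,\tvr(f_k),\qquad \omega(|h|)\to 0,
\]
--- and then explicitly ``granted'' it. That is a genuine gap, not a routine omission: tightness and the $L^1$ bound are trivial here (every $f_k$ vanishes off $\overline\Omega$ by the very definition of $\bvr(\Omega)$), so the Fr\'echet--Kolmogorov criterion reduces \emph{entirely} to this estimate. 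Without it your argument proves nothing beyond what the hypotheses already say. For the record, the estimate is true with $\omega(|h|)=C(N,\alpha)|h|^{\alpha}$, and the standard proof is the one hiding behind the cited references: from $\widehat{\nabla^{\alpha}_{\textup{R}} f}(\xi)=i\xi|\xi|^{\alpha-1}\widehat f(\xi)$ one gets the representation $f=K\ast \nabla^{\alpha}_{\textup{R}}f$ with the vector kernel $K(x)=c_{N,\alpha}\,x/|x|^{N-\alpha+1}$, homogeneous of degree $\alpha-N$; then $\|\tau_hK-K\|_{L^1}\le C|h|^{\alpha}$ by splitting at $|x|=|h|$ and scaling, and Young's inequality for the measure $\nabla^{\alpha}_{\textup{R}}f_k$ (whose total mass is $\tvr(f_k)$) gives the claim. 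Alternatively one can cite the embedding of $\bvr$ into $W^{\beta,1}_{\mathrm{loc}}$ for $\beta<\alpha$ and use the elementary translation estimate there. Either way, this step must be proved or cited, not assumed.

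Two smaller points. First, your appeal to the fractional Gagliardo--Nirenberg--Sobolev inequality $\|g\|_{L^{N/(N-\alpha)}}\le C\,\tvr(g)$ is itself a deep $L^1$-type estimate (Schikorra--Spector--Van Schaftingen), known for $N\ge 2$; it is also more than you need for $L^1$-precompactness, since Kolmogorov--Riesz in $L^1$ requires only boundedness, tightness and translation continuity --- the GNS bound is genuinely needed only to upgrade the convergence to $L^p$, $1\le p<N/(N-\alpha)$, by interpolation (a range you correctly supplied, and which the paper's statement leaves implicit). Second, your lower-semicontinuity argument via the dual definition is correct and clean: $\divr\cdot\Psi$ is indeed bounded for $\Psi\in C^1_c(\bbr;\bbr)$ (it decays like $|x|^{-N-\alpha}$), so the pairing passes to the limit under $L^1$ convergence; this reproduces \cite[Proposition 2.6]{Antil2024}, which the paper invokes separately elsewhere. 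In summary: the architecture of your proof is the standard and correct one, but its load-bearing step is missing, so as written the proposal does not constitute a proof.
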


\subsection{Compactness in \texorpdfstring{$\bvG$}{bvg}} We also have the following compactness result for functions in the Gagliardo type BV spaces, which are fixed, outside a bounded set.

\begin{lem}
    \label{lem:com01}
    Let $\Omega$ be an open and bounded set in $\bbr$. Assume $(f_k)_{k=1}^\infty\subset \bvG(\Omega;u_0)$ such that
    \begin{align*}
       \sup_{k\geq 1} \left\{\|f_k\|_{L^1(\Omega)} + \tvG(f_k,\bbr)\right\} < \infty.
    \end{align*} Then there exists $f\in \bvG(\Omega,u_0)$ such that 
    \begin{align*}
        \tvG(f;\bbr)\leq \liminf_{k\geq 1}\, \tvG(f_k;\bbr).
    \end{align*} Moreover, there is a subsequence $\{f_{k_i}\}_{i=1}^\infty$ such that $\|f_{k_i}-f\|_{L^p(\bbr)}\to 0$ as $i\to\infty$ for $1\leq p<\frac{N}{N-\alpha}$.
\end{lem}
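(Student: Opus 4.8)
The plan is to transfer the problem to the fractional Sobolev space $\textbf{W}^{\alpha,1}$, where the identification $\tvG(f;\bbr)=[f]_{\textbf{W}^{\alpha,1}(\bbr)}$ of \cite[Theorem 3.4]{Antil2024} turns the seminorm hypothesis into a uniform Gagliardo-seminorm bound $\sup_k [f_k]_{\textbf{W}^{\alpha,1}(\bbr)}<\infty$. First I would upgrade the $L^1(\Omega)$ bound to a bound on all of $\bbr$: since each $f_k\in\bvG(\Omega;u_0)$ agrees with $u_0$ on $\bbr\setminus\Omega$ and $u_0\in\bvG(\bbr)\subset L^1(\bbr)$, one has $\|f_k\|_{L^1(\bbr)}=\|f_k\|_{L^1(\Omega)}+\|u_0\|_{L^1(\bbr\setminus\Omega)}$, which is uniformly bounded. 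Hence $\sup_k\|f_k\|_{\textbf{W}^{\alpha,1}(\bbr)}<\infty$.

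The key reduction is to subtract the fixed datum. Setting $g_k:=f_k-u_0$, each $g_k$ vanishes a.e.\ on $\bbr\setminus\Omega$ and therefore is supported in $\overline{\Omega}$, while $[g_k]_{\textbf{W}^{\alpha,1}(\bbr)}\leq[f_k]_{\textbf{W}^{\alpha,1}(\bbr)}+[u_0]_{\textbf{W}^{\alpha,1}(\bbr)}$ and $\|g_k\|_{L^1(\bbr)}\leq\|f_k\|_{L^1(\Omega)}+\|u_0\|_{L^1(\Omega)}$ remain uniformly bounded. Fixing a ball $B\supset\overline{\Omega}$, monotonicity of the nonnegative Gagliardo double integral gives $[g_k]_{\textbf{W}^{\alpha,1}(B)}\leq[g_k]_{\textbf{W}^{\alpha,1}(\bbr)}$, so the $g_k$ are uniformly bounded in $\textbf{W}^{\alpha,1}(B)$. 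I would then invoke the fractional Rellich--Kondrachov theorem on the bounded domain $B$, valid in the subcritical range $1\leq p<\tfrac{N}{N-\alpha}$ (the Sobolev conjugate of the pair $(\alpha,1)$), to extract a subsequence $g_{k_i}\to g$ in $L^p(B)$; as every $g_k$ vanishes outside $\overline{\Omega}\subset B$, this is convergence in $L^p(\bbr)$. Setting $f:=g+u_0$ yields $f_{k_i}\to f$ in $L^p(\bbr)$, which is the asserted strong convergence.

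To obtain the lower semicontinuity bound with the $\liminf$ over the \emph{full} sequence, I would first select a subsequence along which $\tvG(f_k;\bbr)$ converges to $\liminf_{k}\tvG(f_k;\bbr)$, and only then run the compactness extraction of the previous step on that subsequence. Since all functions involved share the fixed support $\overline{\Omega}$, the $L^p$ convergence of $g_{k_i}$ upgrades by H\"older to $L^1(\bbr)$ convergence, whence $f_{k_i}\to f$ in $L^1(\bbr)$. The lower semicontinuity of $\tvG$ under $L^1$ convergence, \cite[Lemma 3.8]{Antil2024}, then gives $\tvG(f;\bbr)\leq\liminf_i\tvG(f_{k_i};\bbr)=\liminf_k\tvG(f_k;\bbr)$, so in particular $f\in\bvG(\bbr)$. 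Moreover $g$ is an $L^p$ limit (hence, along a further subsequence, an a.e.\ limit) of functions vanishing outside $\Omega$, so $g=0$ a.e.\ there and $f=u_0$ a.e.\ on $\bbr\setminus\Omega$; thus $f\in\bvG(\Omega;u_0)$.

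The main obstacle is precisely the compactness step: the embedding $\textbf{W}^{\alpha,1}(\bbr)\hookrightarrow L^{N/(N-\alpha)}(\bbr)$ is continuous but not compact on the unbounded space $\bbr$, and without extra structure no $L^p$-convergent subsequence need exist. The ``fixed-outside-$\Omega$'' constraint built into $\bvG(\Omega;u_0)$ is exactly what resolves this, by letting us subtract $u_0$ and localise to a ball where the bounded-domain fractional Rellich--Kondrachov theorem applies; this localisation is also the source of the subcritical restriction $p<\tfrac{N}{N-\alpha}$.
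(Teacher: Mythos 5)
Your proposal is correct and follows essentially the same route as the paper: both subtract the fixed datum to get $g_k:=f_k-u_0$ supported in $\overline{\Omega}$, localise to a ball $B\supset\Omega$, apply the bounded-domain fractional Rellich--Kondrachov compactness (the paper cites \cite[Corollary 7.2]{DiNezza2012}) in the subcritical range $1\leq p<\frac{N}{N-\alpha}$, set $f=g+u_0$, and conclude via the lower semicontinuity result \cite[Lemma 3.8]{Antil2024}. Your extra care in first passing to a subsequence realising the $\liminf$ and in verifying $g=0$ a.e.\ outside $\Omega$ via a.e.\ convergence makes explicit two points the paper leaves implicit.
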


\begin{proof}
    By \cite[Theorem 3.4]{Antil2024},
    \begin{align}
        \tvG(f_k;\bbr) = [f_k]_{W^{\alpha,1}(\bbr)}.
    \end{align}
    Set $g_k:=f_k-u_0$. Then, $g_k\in \bvG(\bbr)$ such that $g_k\equiv 0$ on $\bbr\setminus\Omega$ a.e. Choose a ball $B=B(0,r)$ such that $\Omega\subset B(0,r)$. To employ the compactness theorems of $W^{\alpha,1}(B)$, we must prove bounds on $\|g_k\|_{W^{\alpha,1}(B)}$, however
    \begin{align*}
        \|g_k\|_{W^{\alpha,1}(B)} = \|g_k\|_{L^{1}(B)}+[g_k]_{W^{\alpha,1}(B)}&\leq \|g_k\|_{L^{1}(\bbr)}+[g_k]_{W^{\alpha,1}(\bbr)}\\
        & \leq \|f_k\|_{L^{1}(\bbr)}+[f_k]_{W^{\alpha,1}(\bbr)}+ \|u_0\|_{L^{1}(\bbr)}+[u_0]_{W^{\alpha,1}(\bbr)}\\
        & = \|f_k\|_{L^{1}(\bbr)}+\tvG(f_k,\bbr)+ \|u_0\|_{L^{1}(\bbr)}+\tvG(u_0,\bbr),
    \end{align*} the last of which is bounded independently of $k$. A ball is an extension domain and therefore by \cite[Corollary 7.2]{DiNezza2012}, there is a subsequence $g_{k_i}$ and $g\in W^{\alpha,1}(B)$ with the properties:
    \begin{align*}
        \begin{cases}
            g_{k_i}\rightharpoonup g\mbox{ in } W^{\alpha,1}(B)\mbox{-weak},\\
            g_{k_i} \to g\mbox{ in }L^p(B), 1\leq p< \frac{N}{N-\alpha},\\
            g_{k_i}\to g\mbox{ a.e }x\in B.
        \end{cases}
    \end{align*}
    Now extend $g$ by zero outside $B$ and set $f:=g+u_0$.
    Clearly, for $p\in \left[1,\frac{N}{N-\alpha}\right)$ $$\|f_{k_i}-f\|_{L^p(\bbr)}=\|g_{k_i}-g\|_{L^p(\Omega)}\leq \|g_{k_i}-g\|_{L^p(B)}\to 0$$ as $i\to \infty$. Through an application of the lowersemicontinuity result (\cite[Lemma 3.8]{Antil2024}), one then gets $$\tvG(f;\bbr)\leq \liminf_{k\geq 1}\, \tvG(f_k;\bbr).$$ This completes the proof. (This technique, while elementary, came to the attention of the authors in \cite{Byun2022}.)
\end{proof}

\subsection{Compactness in \texorpdfstring{$\lwbv$}{lwbv}} The following theorem will be required for proving convergence of approximate solutions to the variational solution for the Riesz type nonlocal BV spaces.
\begin{lem}\label{lem:com}
    Let $\Omega\subset\bbr$ be a bounded open set. Suppose that
    \[
    u_n \in \lwbv \quad \mbox{ for all } \quad n \geq 1.
    \]
    If 
    \[
        \sup_{n \geq 1}\left(\int_0^T \tvr(u_n(t))\,dt+\|\de_t u_n\|_{L^1(\Omega_T)}+\|u_n\|_{L^1(\Omega_T)}\right) < +\infty 
    \]
    then 
    \[
    \mbox{the sequence } \{u_n\}_{n \geq 1} \mbox{ is precompact in } L^1(\Omega_T).
    \]
\end{lem}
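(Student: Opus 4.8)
The plan is to prove an Aubin--Lions--Simon type statement by combining the \emph{time-averaged} spatial regularity coming from the $\tvr$ bound with the \emph{temporal} regularity coming from the $L^1$ bound on $\de_t u_n$, and then feeding time-mollified sequences into the Banach-valued Arzelà--Ascoli theorem (\cite[Chapter 3, Theorem 3.1]{Lang1993}). First I would record two uniform consequences of the hypotheses. Let $M$ bound all three quantities in the assumption. Since $\de_t u_n \in L^1(\Omega_T)$, each $u_n$ has an absolutely continuous representative in $C^0([0,T];L^1(\Omega))$, and writing $u_n(t) = u_n(t_0) + \int_{t_0}^t \de_s u_n\,ds$ and averaging over $t_0 \in (0,T)$ yields the uniform-in-time $L^1$ bound
\[
\sup_{t\in[0,T]}\|u_n(t)\|_{L^1(\Omega)} \leq \tfrac1T\|u_n\|_{L^1(\Omega_T)} + \|\de_t u_n\|_{L^1(\Omega_T)} \leq \tfrac{M}{T}+M.
\]
The same computation, now integrated in time and combined with Fubini, gives the uniform averaged temporal modulus of continuity
\[
\int_0^{T-\tau}\|u_n(t+\tau)-u_n(t)\|_{L^1(\Omega)}\,dt \leq \tau\,\|\de_t u_n\|_{L^1(\Omega_T)} \leq \tau M \qquad (0<\tau<T).
\]

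Next, for a fixed mollification parameter $h$ I would introduce a time mollification $[u_n]_h$ built from a symmetric mollifier (after a standard time-extension of $u_n$), deliberately avoiding the one-sided exponential mollification of \cref{timemolli} tied to a datum, since the temporal trace $u_n(0)$ need not lie in $\bvr(\Omega)$. For fixed $h$, convexity and positive homogeneity of the $\tvr$-seminorm together with $\int_0^T\tvr(u_n)\,dt \leq M$ give a pointwise-in-$t$ bound $\tvr([u_n]_h(t)) \leq C(h)\,M$, uniform in $n$; combined with the uniform $L^1$ bound above, \cref{lem:com00} makes $\{[u_n]_h(t)\}_n$ relatively compact in $L^1(\Omega)$ for each $t$. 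Moreover $\{[u_n]_h\}_n$ is equicontinuous in $t$ for fixed $h$, with modulus controlled by the $L^1$-modulus of the mollifier kernel times the uniform $L^1$ bound. Banach-valued Arzelà--Ascoli then shows that, for each fixed $h$, the family $\{[u_n]_h\}_n$ is relatively compact in $C^0([0,T];L^1(\Omega))$, hence in $L^1(\Omega_T)$.

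Finally I would pass from the mollified families to $\{u_n\}$ by a totally-bounded argument. The averaged modulus of continuity yields $\|[u_n]_h - u_n\|_{L^1(\Omega_T)} \leq C\,M\,h$, which tends to $0$ as $h\to 0$ uniformly in $n$. Given $\varepsilon>0$, fix $h$ so that this approximation error is below $\varepsilon/3$ for every $n$; cover the precompact family $\{[u_n]_h\}_n$ by finitely many $L^1(\Omega_T)$-balls of radius $\varepsilon/3$; the correspondingly enlarged balls of radius $\varepsilon$ then cover $\{u_n\}_n$. Hence $\{u_n\}_n$ is totally bounded in the complete space $L^1(\Omega_T)$ and therefore precompact.

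The crux of the argument is that the spatial BV control is only available in the time-averaged form $\int_0^T\tvr(u_n)\,dt$, not pointwise in $t$, so one cannot extract spatial compactness on time slices directly. The mollification resolves this by trading the average for a pointwise-in-$t$ $\bvr$ bound at the cost of a factor $C(h)$ that blows up as $h\to 0$; this is harmless because $h$ stays fixed during the Arzelà--Ascoli step and is only sent to zero afterwards, where the $L^1$ bound on $\de_t u_n$ supplies exactly the uniform approximation needed to close the diagonal argument. The only genuinely delicate technical point is the treatment of the temporal endpoints in the choice of mollifier, which is why a symmetric kernel with extension is preferable to the flow mollification of \cref{timemolli}.
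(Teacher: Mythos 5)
Your proposal is correct and follows essentially the same route as the paper: the paper also smooths in time (via one-sided Steklov averages rather than your symmetric mollifier), derives a fixed-$h$ pointwise-in-$t$ $\bvr$ bound of size $M/h$ from the time-averaged $\tvr$ bound, applies \cref{lem:com00} slice-wise together with Banach-valued Arzel\`a--Ascoli to get compactness of the mollified family, and closes with the uniform estimate $\|(u_n)_h-u_n\|_{L^1}\leq h\|\de_t u_n\|_{L^1(\Omega_T)}$. The only cosmetic difference is at the temporal endpoints, which you handle by extension and a symmetric kernel while the paper works on $[0,T-h]$ and covers the remaining piece by time reversal.
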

\begin{proof}
    Let $0<h<T$ and consider the Steklov averages:
    \[
    (u_n)_h(t) = \frac{1}{h}\int_t^{t+h}u_n(s)\,ds \quad \mbox{ for } \quad 0\leq t \leq T-h.
    \]
    Then $(u_n)_h \in C^0([0,T-h];L^1(\Omega))$ and for any $0\leq t_1<t_2\leq T-h$ there holds 
    \[
    h\|(u_n)_h(t_2) - (u_n)_h(t_1)\|_{L^1(\Omega)} \leq (t_2-t_1)\|\de_t u_n\|_{L^1(\Omega_T)}.
    \]
    We fix an $h \in (0,T)$. Then the preceding estimate shows that $(u_n)_h$ is equicontinuous in $C^0([0,T-h];L^1(\Omega))$. Next we will show that $(u_n(t))_h$ with $0\leq t \leq T-h$ is precompact in $L^1(\Omega)$. This will put us in the setting of Arzela- Ascoli for Banach valued mappings. Indeed, let $\Psi \in C_c^1(\bbr,\bbr)$ with $\|\Psi\|_{\infty} \leq 1$. We compute
    \begin{align*}
        \int_{\Omega}(u_n)_h(t)\divr \Psi \,dx \leq \frac{1}{h}\int_{t}^{t+h}\tvr(u_n(s))\,ds \leq \frac{1}{h}\int_{0}^{T}\tvr(u_n(s))\,ds 
    \end{align*}
    which implies 
    \[
    \sup_{n \geq 1}\tvr((u_n)_h(t)) \leq \frac{1}{h} \sup_{n \geq 1}\int_0^T\tvr(u_n(s))\,ds < +\infty.
    \]
    Since we also have 
    \[
     \sup_{n \geq 1} \|u_n\|_{L^1(\Omega_T)}  < +\infty,
    \]
    it follows that $(u_n(t))_h$ is uniformly bounded in $\bvr(\Omega)$. Thus by \cref{lem:com00} it follows that $(u_n(t))_h$ is precompact in $L^1(\Omega)$ (note that we are always working with an extension by $0$ to the whole of $\bbr$.) Therefore by a Arzela-Ascoli type theorem for Banach valued maps (see \cite[Chapter 3, Theorem 3.1]{Lang1993}), it holds that the sequence $(u_n(t))_h$ is precompact in $C^0([0,T-h];L^1(\Omega))$ and hence in $L^1(0,T-h;L^1(\Omega))$. We fix an $S \in (0,T)$. We compute
    \[
    \|(u_n)_h-u_n\|_{L^1(0,S;L^1(\Omega))} \leq h\|\de_t u_n\|_{L^1(\Omega_T)}.
    \]
    Then the uniform bound on $\|\de_t u_n\|_{L^1(\Omega_T)}$ implies that
    \[
    \lim_{h \searrow 0}\sup_{n \geq 1}\|(u_n)_h - u_n\|_{L^1(0,S;L^1(\Omega))} = 0.
    \]
    Since the averages $(u_n)_h$ are precompact in $L^1(0,S;L^1(\Omega))$ for any $0<h<T-S$, it follows that $(u_n)_{n \geq 1}$ is precompact in $L^1(0,S;L^1(\Omega))$ for every $0<S<T$. By translating the solution appropriately and noting that the uniform bounds continue to hold for the translations, we conclude that the sequence $(u_n)_{n \geq 1}$ is precompact in $L^1(0,S;L^1(\Omega))$. The conclusion follows. 
\end{proof}

\subsection{Compactness in \texorpdfstring{$\lwbvGz$}{lwbvgz}} The following theorem will be used for proving convergence of approximate solutions to the variational solution in the setting of Gagliardo type nonlocal BV spaces. The proof is similar to \cref{lem:com}, however we repeat the arguments for clarity.

\begin{lem}\label{lem:com2}
    Let $\Omega\subset\bbr$ be a bounded open set. Suppose that $u_n \in \lwbvGz$ for all $n \geq 1$.
    \[
        \text{If }\sup_{n \geq 1}\left(\int_0^T \tvG(u_n(t);\bbr)\,dt+\|\de_t u_n\|_{L^1(\bbr\times [0,T])}+\|u_n\|_{L^1(\bbr\times[0,T])}\right) < +\infty 
    \] then the sequence $\{u_n\}_{n \geq 1}$ is precompact in $L^1(\bbr\times[0,T]).$
\end{lem}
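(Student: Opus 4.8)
The plan is to follow the template of \cref{lem:com} essentially verbatim, replacing the spatial domain $\Omega$ by $\bbr$, the seminorm $\tvr$ by $\tvG(\cdot;\bbr)$, and the stationary compactness input \cref{lem:com00} by its Gagliardo counterpart \cref{lem:com01}. For $0<h<T$ I would introduce the Steklov averages
\[
(u_n)_h(t) = \frac{1}{h}\int_t^{t+h} u_n(s)\,ds, \qquad 0 \leq t \leq T-h,
\]
and first observe that they inherit the ``fixed outside $\Omega$'' constraint: since $u_n(s)(x)=u_0(x)$ for a.e.\ $x\in\bbr\setminus\Omega$ and a.e.\ $s$, the time-average equals $u_0(x)$ off $\Omega$, so $(u_n)_h(t)\in\bvG(\Omega;u_0)$.

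Next I would establish the two ingredients of a Banach-valued Arzelà–Ascoli argument. For equicontinuity in $C^0([0,T-h];L^1(\bbr))$, the estimate
\[
h\,\norm{(u_n)_h(t_2) - (u_n)_h(t_1)}_{L^1(\bbr)} \leq (t_2-t_1)\,\norm{\de_t u_n}_{L^1(\bbr\times[0,T])}
\]
combined with the uniform control on $\norm{\de_t u_n}_{L^1}$ suffices for fixed $h$. For pointwise-in-time precompactness, I would use the duality definition of $\tvG$: testing against $\Phi\in C_c^1(\bbr\times\bbr;\bbR)$ with $\norm{\Phi}_{L^\infty}\leq 1$ and applying Fubini gives
\[
\int_\bbr (u_n)_h(t)\,(\divG\Phi)\,dx \leq \frac{1}{h}\int_0^T \tvG(u_n(s);\bbr)\,ds,
\]
so that $\sup_n \tvG((u_n)_h(t);\bbr)<\infty$ for each fixed $t$; together with $\norm{(u_n)_h(t)}_{L^1(\Omega)}\leq \tfrac1h\norm{u_n}_{L^1(\bbr\times[0,T])}$ this makes $\{(u_n)_h(t)\}_n$ uniformly bounded in $\bvG(\Omega;u_0)$. \cref{lem:com01}, applied with $p=1<\tfrac{N}{N-\alpha}$ (which holds for every $\alpha>0$), then yields precompactness of $\{(u_n)_h(t)\}_n$ in $L^1(\bbr)$.

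With both ingredients in hand, the Banach-valued Arzelà–Ascoli theorem \cite[Chapter 3, Theorem 3.1]{Lang1993} gives precompactness of $\{(u_n)_h\}_n$ in $C^0([0,T-h];L^1(\bbr))$, hence in $L^1(0,T-h;L^1(\bbr))$. I would then transfer this to the $u_n$ themselves through the standard bound $\norm{(u_n)_h-u_n}_{L^1(0,S;L^1(\bbr))}\leq h\,\norm{\de_t u_n}_{L^1(\bbr\times[0,T])}$, which tends to $0$ as $h\searrow0$ uniformly in $n$; an approximation argument then gives precompactness of $\{u_n\}_n$ in $L^1(0,S;L^1(\bbr))$ for every $S<T$, and a time-translation exactly as in \cref{lem:com} upgrades this to the full cylinder $\bbr\times[0,T]$.

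The only genuine departure from the Riesz proof — and the step I expect to be the main, though mild, obstacle — is verifying that the stationary lemma \cref{lem:com01} can be invoked on the whole space $\bbr$ rather than on a bounded set. This is precisely where the ``fixed outside $\Omega$'' structure is essential: subtracting $u_0$ reduces the spatial compactness to a compactly supported problem on a ball $B\supset\Omega$, after which the uniform $\bvG(\Omega;u_0)$ bounds carry the argument through. Everything else is a routine transcription of the Riesz case.
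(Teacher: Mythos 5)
Your proposal is correct and follows essentially the same route as the paper's own proof: Steklov averages, equicontinuity from the uniform $L^1$ bound on $\de_t u_n$, slicewise $\bvG(\Omega;u_0)$ bounds via the duality definition of $\tvG$ combined with \cref{lem:com01}, a Banach-valued Arzelà--Ascoli argument, the uniform transfer estimate $\norm{(u_n)_h-u_n}_{L^1}\leq h\norm{\de_t u_n}_{L^1}$, and a time translation to cover $[0,T]$. The only cosmetic difference is that the paper invokes \cite[Lemma 1]{Simon1986} at the Arzelà--Ascoli step while you cite \cite[Chapter 3, Theorem 3.1]{Lang1993} as in \cref{lem:com}; both serve the same purpose, and your observation that the ``fixed outside $\Omega$'' structure reduces the spatial compactness to a bounded set is exactly the mechanism inside \cref{lem:com01}.
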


\begin{proof}
    For $h\in (0,T)$, we use the Steklov time-mollification $(u_n)_h$ of $u_n$ 
    \begin{align*}
        (u_n)_h(t):=\frac1h\int_t^{t+h}u_n(s)\,ds\,\mbox{ for }t\in[0,T-h].
    \end{align*} By, we have $(u_n)_h\in C^0([0,T-h];L^1(\Omega))$ by \cite[Lemma 2.2]{chagas2017propertiessteklovaverages} and we can calculate as 
    \begin{align*}
        \|(u_n)_h(t_2)-(u_n)_h(t_1)\|_{L^1(\Omega)} & = \|\int_{t_1}^{t_2} \de_s(u_n)_h(s)\,dx\|_{L^1(\Omega)}\\
        & = \|\frac1h\int_{t_1}^{t_2}[u_n(s+h)-u_j(s)]\,ds\|_{L^1(\Omega)}\\
        &\leq \frac1h \int_{t_1}^{t_2}\int_s^{s+h}\|\de_\tau u_n(\tau)\|_{L^1(\Omega)}\,d\tau\,ds\\
        &\leq (t_2-t_1)\|\de_tu_n\|_{L^1(\Omega},
    \end{align*} where in the second last step, we use \cite[Lemma 4.3]{chagas2017propertiessteklovaverages}. This chain of calculations lets us conclude that for any fixed $h\in(0,T)$ the sequence of Steklov averages $((u_n)_h)_{n\in\mathbb{N}}$ is uniformly equicontinuous in $C^0([0,T-h];L^1(\Omega))$. 
    For any such fixed  $h\in(0,T)$ and $t\in [0,T-h]$ we look at the sequence of time-slices $(u_n)_h(t)\in L^1(\Omega)$. For $\Phi\in C_c^{\infty}(\bbr\times\bbr)$ with $\|\Phi\|_{L^\infty(\bbr\times\bbr)}\leq 1$, by Fubini's theorem we have
    \begin{align*}
        \int_\Omega (u_n)_h(t) \divG(\Phi)\,dx & = \frac1h\int_\Omega\int_t^{t+h} u_n(s)\,ds\divG(\Phi)\,dx\\
        &= \frac1h \int_t^{t+h} \int_\Omega u_n(s) \divG(\Phi)\,dx\,ds\\
        &\leq \frac1h\int_t^{t+h}\tvG(u_n(s))\,ds\\
        &\leq \frac1h \int_0^T\tvG(u_n(s))\,ds.
    \end{align*}
    By taking supremum over all $\Phi$ with the specified properties, we get the uniform bound
    \begin{align*}
        \sup_{n\in\mathbb{N}} \tvG((u_n)_h(t))\leq \frac1h\int_0^T\tvG(u_n(s))\,ds<\infty.
    \end{align*} Therefore, we get uniform boundedness of the sequence $(u_n)_h(t)$ for $t\in [0,T-h]$ in $\bvG(\bbr)$. Moreover, the Steklov averages $(u_n)_h(t)$ satisfy the boundary condition $(u_n)_h(x,t)=u_0(x)$ for a.e. $x\in\bbr\setminus\Omega$. Hence, we get uniform boundedness of the sequence $(u_n)_h(t)$ in $\bvG(\Omega,u_0)$. Therefore, we may apply the $\bvG$ compactness lemma \cref{lem:com01} to get relative compactness of the sequence $(u_n)_h(t)$ in $L^1(\Omega)$ for any $h\in (0,T)$ and $t\in[0,T-h]$. We can further apply \cite[Lemma 1]{Simon1986} to get that the sequence $(u_n)_h$ is relatively compact in $C^0([0,T-h];L^1(\Omega))$ and hence in $L^1(0,T-h;L^1(\Omega))$. 
    Fix a $S\in(0,T)$. Using Fubini's theorem, for $h\in (0,T-S)$, we have
    \begin{align*}
        \|(u_n)_h-u_n\|_{L^1(0,S;L^1(\Omega))}&=\int_0^{S}\int_{\Omega}\left|\frac1h\int_{t}^{t+h}\int_s^t\de_\tau u_n(\tau)\,d\tau\,ds\right|\,dx\,dt\\
        &\leq \int_0^{S}\int_\Omega\int_t^{t+h}\left|\partial_{\tau}u_n(\tau)\right|\,d\tau\,dx\,\,d\tau\\
        &=\int_0^{S+h}\int_{\max\{o,\tau-h\}}^{\min\{S,\tau\}}\int_\Omega\|\partial_tau u_n(\tau)\|\,dt\,dx\,d\tau\\
        &\leq h\|\de_t u_n\|_{L^1(\Omega_T)}.
    \end{align*} Therefore, it holds that
    \begin{align*}
        \lim_{h\to0}\sup_{n\in\mathbb{N}}\|(u_n)_h-u_n\|_{L^1_{t,x}}=0.
    \end{align*}
    Therefore, $(u_n)h$ converges uniformly in $L^1(0,S;L^1(\Omega))$ to $u_n$ as $h\to0$. Since $(u_n)_h$ is relatively compact in $L^1(0,S;L^1(\Omega))$ so is the sequence $u_n$. In particular, it is relatively compact in $L^1(0,T/2;L^1(\Omega))$.

    The same argument applied to the translated sequence $\tilde{u}_n(t)=u_n(T-t)$ gives us relative compactness in $L^1(0,T/2;L^1(\Omega))$ for $\tilde{u}_n$, which is relative compactness of $u$ in $L^1(T/2,T;L^1(\Omega))$. Therefore, we have the desired compactness.
\end{proof} 
\bibliographystyle{alphaurl}
\bibliography{main}
\end{document}